\theoremstyle{plain}
\newtheorem{assm}{Assumption}
\newtheorem{clam}{Claim}
\newtheorem{lema}{Lemma}
\newtheorem{thme}{Theorem}
\newtheorem{remark}{Remark}
\newtheorem{prope}{Proposition}
\theoremstyle{definition}
\newcommand\BibTeX{{\rmfamily B\kern-.05em \textsc{i\kern-.025em b}\kern-.08em
T\kern-.1667em\lower.7ex\hbox{E}\kern-.125emX}}
\begin{document}

\title{Output-Positive Adaptive Control\\ of Hyperbolic PDE-ODE Cascades}

\author{Ji~Wang,~\IEEEmembership{Member,~IEEE},
       and Miroslav~Krstic,~\IEEEmembership{Fellow,~IEEE}
\thanks{
J. Wang is with the Department of Automation, Xiamen University, Xiamen,
Fujian 361005, China (e-mail:jiwang@xmu.edu.cn).

M. Krstic is with the Department of Mechanical and Aerospace Engineering, University of California, San Diego, La Jolla, CA 92093-0411, USA(e-mail: krstic@ucsd.edu).
}}

\maketitle
\begin{abstract}
In this paper, we propose a new adaptive Control Barrier Function (aCBF) method to design the output-positive adaptive control law for a hyperbolic PDE-ODE cascade with parametric uncertainties. This method employs the recent adaptive
control approach with batch least-squares identification (BaLSI,
pronounced ``ballsy'') that completes perfect parameter identification
in finite time and offers a previously unforeseen advantage in
safe control design with aCBF, which we elucidate in this paper. Since the true challenge is exhibited for CBF of a high relative degree, we undertake a control design in this paper for a class of systems that possess a particularly extreme relative degree: $2\times2$ hyperbolic PDEs sandwiched by a strict-feedback nonlinear ODE and a linear ODE, where the unknown coefficients are associated with the PDE in-domain coupling terms and with the input signal of the distal ODE. The designed output-positive adaptive controller guarantees the positivity of the output signal that is the furthermost state from the control input as well as the exponential regulation of the overall plant state to zero. The effectiveness of the proposed method is illustrated by numerical simulation.
\end{abstract}

\begin{IEEEkeywords}Hyperbolic PDEs, adaptive control, safe control, backstepping, least-squares identifier.\end{IEEEkeywords}

\section{Introduction}
The safe control with adaptive Control Barrier Function (aCBF), employing conventional continuous infinite-time adaptation, requires that the initial conditions be restricted to a subset of the safe set due to parametric uncertainty, where the safe set is shrunk in inverse proportion to the adaptation gain. The recent regulation-triggered adaptive control approach with batch least-squares identification (BaLSI, pronounced ``ballsy'') completes perfect parameter identification in finite time and offers a previously unforeseen advantage in aCBF-based safe control, which we elucidate in this paper.
\subsection{Physical motivation}\label{sec:UAV}
This work is motivated by control of UAVs with hanging loads for delivery, where the suspended objects or
payloads are tied to the bottom of a cable, of which
the other end is hanged to a multi-rotor UAV, i.e., a structure of UAV-cable-payload. The control task is to place the payload at the setpoint that is in a narrow space and avoid collision with the surrounding environment by regulating the UAV, i.e., designing the speed of rotors of multi-rotor UAVs. The mathematical model of the structure of UAV-cable-payload is a nonlinear ODE-PDE-ODE sandwich system \cite{J2020delay}. In detail, the oscillation dynamics of cables are modeled by wave PDEs \cite{J2018Balancing}, the
lumped tip payload at one end of the cable is described by a linear ODE, and the multi-rotor UAV dynamics are modeled as a nonlinear ODE considering the non-negligible couplings in its motion dynamics. The control input of the overall sandwich system, i.e., the speed of rotors of multi-rotor UAVs, is the control input applied at the nonlinear ODE. For the sake of order reduction, the wave PDEs
can usually be converted to  heterodirectional coupled transport
PDEs via Riemann transformations \cite{J2018Balancing}. Therefore, the aforementioned control
task comes down to
the following theoretical problem: safe boundary control of heterodirectional coupled hyperbolic
PDEs sandwiched by nonlinear and linear ODEs, to ensure the exponential regulation of the overall sandwich system and meanwhile guarantee the safety of the distal ODE, i.e., constraining the state furthest from the control input in a safe region. Besides, considering the uncertainties that often appear in the cable material characteristics and the hanging load, we further require a safe adaptive control scheme, which allows uncertain parameters existing in the PDE and distal ODE and still guarantees the stability and safety mentioned above.
\subsection{Boundary control of coupled hyperbolic
PDE systems}
In addition to the UAV with hanging load mentioned above, heterodirectional coupled hyperbolic PDEs are also ubiquitous in numerous physical models involving the water level dynamics \cite{Prieur2018Boundary}, traffic flow \cite{Goatin2006},  string-actuated mechanisms \cite{J2022PDE}, and so on.
Basic boundary stabilization problem of $2\times2$ linear
hyperbolic PDEs was addressed in \cite{Coron2013Local,Vazquez2011Backstepping} by backstepping,
which was further extended to boundary control of a $n+1$ hyperbolic system in \cite{Meglio2013Stabilization}, and a more general $n+m$ hyperbolic system, where the
number of PDEs in either direction is arbitrary, in \cite{Hu2016Control}.
Incorporating ODEs, control of the heterodirectional coupled hyperbolic  PDE-ODE systems was addressed in \cite{Meglio2017Stabilization,J2018Balancing,Deutscher2018Output1} as well.

The actuator in most boundary controlled PDEs does not act directly and instantly. The actuator has its own considerable inertia, namely, its own lumped-parameter dynamics modeled by {an ODE}. Consequently,  the overall system that arises in control of heterodirectional coupled hyperbolic PDE-ODE models is often in a sandwich configuration (ODE-PDE-ODE), where the control input acts on only one of the two ODEs, and not directly on the PDE.
Nominal control designs
of heterodirectional coupled hyperbolic PDEs sandwiched between two ODEs
can be found in \cite{Saba2017,Saba2019,J2017Control,Deutscher2018Output}. Addressing the additional effects in real applications, such as nonlinearities, disturbances, unknown parameters, delays, sampled (or event-triggered) sensing, and robustness, the extended control designs for the sandwich PDEs were presented in \cite{J2020delay,Meglio2019Robust,J2021Output-feedback,Auriol2023Robustification}.
\subsection{CBF Safe Control}
The PDE control designs listed above only pursue stabilization without considering safety--that is, constraining
the state in a safe region.  In many engineering applications,
like autonomous driving, robotics, UAVs, not only stability but also safety
for avoiding collision is vital. The notion of safety was first
introduced in \cite{1977Lamport} and formalized in \cite{1984Lamport}, by L. Lamport.

One way to ensure safety in a dynamic system is by using the Barrier Lyapunov Function (BLF) method. According to \cite{Tee2009}, this method involves constructing a barrier Lyapunov function $B$ and enforcing that $B\le 0$ over the set $C$ that should be contained in the safe set. Recently, a learning-based BLF method was developed in \cite{2023Zhang,2022Zhang} for adaptive control of safety-critical autonomous vehicles, constraining the states within the safety region even during the learning process.  However, a major limitation of the BLF-based method is that while it ensures safety, it also enforces invariance of every level set, which makes it overly strong and conservative. \cite{Ames2019}.

The Control Barrier Functions (CBFs), introduced in  \cite{Ames2016},  turn out to be a powerful tool for the safe control design, where the considered system state can be constrained in a safe region
once the non-negativity of CBF is ensured. To guarantee this in the closed-loop system, the CBF-based control is often accompanied with a quadratic
program (QP) safety filter that overrides a
potentially unsafe nominal control signal, to generate safe control actions.
High relative degree CBFs were considered in the articles \cite{2016Nguyen}, \cite{2019Xiao}, \cite{2021Xiao},  following the introduction of a non-overshooting control design in
\cite{KrsticBement2006} that is considered as the root of high relative degree CBF terminology. With the tools from \cite{KrsticBement2006},  mean-square stabilization of stochastic
nonlinear systems to an equilibrium at the barrier was solved in \cite{Li2020Mean}, and prescribed-time safety (PTSf) design which enforces
safety only for a finite time of interest to the user was proposed in \cite{Abel2022Prescribed}.

Most of the research in CBF safe control focuses on the systems described by ODEs while very few result is present for PDEs. The first result on safe PDE control is presented in \cite{Koga2023Safe}, extending the CBF-based safe control from ODEs to PDEs. By combining
high-relative-degree CBFs with PDE backstepping boundary control design, \cite{Koga2023Safe} proposed a safe control scheme for a Stefan model with actuator dynamics, whose mathematical model is an ODE-parabolic PDE-ODE system with a state-dependent moving boundary.  Furthermore, by designing an event-triggering mechanism, event-triggered safe control of such a PDE system is further proposed in \cite{Koga2023}. But the safe control design proposed in \cite{Koga2023Safe}, \cite{Koga2023} is not suitable for the control task considered in our paper, not only because of the big differences between the Stefan model in \cite{Koga2023Safe}, \cite{Koga2023} and the $2\times 2$ hyperbolic PDEs with in-domain unstable sources in our paper, but also due to the presence of model uncertainties.

In the presence of model uncertainties that often appear in practical applications, the guaranteed safety  based on the completely known model may fail. To truly enforce safety, it is necessary to
quantify safety in the context of unknown parameters, which motivates the study on adaptive
Control Barrier Functions (aCBFs) safe control, a topic that has drawn much attention in the safe control community  in the past three years. Most existing safety results with aCBFs are built upon the idea of
adaptive Control Lyapunov Functions (aCLFs) proposed in \cite{Krstic1995}.  The pioneering and representative work of aCBFs is \cite{Taylor2020}, which presents a variant of aCLFs in the context of safety: aCBFs, with which safety is adaptively achieved in the presence of parametric model uncertainty, rendering the system solutions constrained to a subset of the original safe set. An extension in \cite{2021Lopez} alleviates the
conservatism by using both parameter adaptation and data-driven model estimation, but conservatively restricts the set of initial conditions. Motivated by the above shortcomings, \cite{2021Maghenem} proposed a safety-critical adaptive control design that features a hybrid adaptive law that enables less conservative
behavior than the continuous update laws used before. Besides, \cite{2023Lopez} presented an adaptive safety framework that
permits the use of the certainty equivalence principle, simplifying the procedure in \cite{Taylor2020}, \cite{2021Lopez}.
More developments on aCBF safety with features of fixed-time adaptation \cite{2021Black} and multiple CBFs \cite{2021Isaly} were proposed as well. Unlike the above results, \cite{wang2023} takes into account the uncertainties in the control-input matrix. While the aforementioned works focus only on adaptive safety, \cite{2021Cohen} made an attempt to synthesize adaptive safety and exponential stabilization by unifying the proposed HO-RaCBF (High Order Robust
Adaptive Control Barrier Function), which inherits the robustness properties of zeroing CBFs, and ES-aCLF (Exponentially Stabilizing Adaptive Control Lyapunov
Function) as a tool to exponentially stabilize uncertain nonlinear systems, where the HO-RaCBF and ES-aCLF are built upon the aCBF paradigm
in \cite{Taylor2020} and the
classical aCLF formulation in \cite{Krstic1995}, respectively, with leveraging the concurrent learning technique \cite{Parikh2019Integral}.

The above aCBF safe control designs are for ODEs. To our best knowledge, it has not been pursued for PDEs. In this paper, we propose a new adaptive CBF method for not only an ODE, but also a system consisting of PDEs and ODEs, making use of the batch least-squares identifier (BaLSI) adaptive scheme that was first proposed in \cite{Karafyllis2019Adaptive,Karafyllis2018Adaptive} for nonlinear ODEs and then extended to PDEs in \cite{Karafyllis2019Adaptive1,JiAdaptive2021,Ji2021Adaptive}. Unlike most least-squares estimation methods \cite{Cao2014,Cao2016,Chowdhary2010,Wang2014,Krstic2009},  where persistency of excitation (PE) is required to guarantee
the parameter convergence, BaLSI does not require the PE assumption.
\subsection{Main Contribution}
Inspired by, but going beyond the application of UAV control in Sec. \ref{sec:UAV}, a generalized control problem of safe adaptive control of a cascade system in the configuration of ODE-heterodirectional coupled hyperbolic PDEs-ODE is dealt with in this paper, guaranteeing both safety and stability, even if uncertainties exist. The main contributions of this paper are:

1) This is the first result of safe adaptive PDE control utilizing aCBFs. The previous PDE adaptive control designs that focus on stabilization, such as \cite{Anfinsen2019Adaptive,Bernard2014,Bresch2014,Karafyllis2019Adaptive1,Smyshlyaev2007Adaptivea,Smyshlyaev2007Adaptiveb,Smyshlyaev2010Adaptive}, have not taken safety into account.

2) As compared to the representative work on aCBF-based safe control \cite{Taylor2020}, which constrains the system solution to a subset of the original safe set, we propose a new aCBF method that constrains the states in the original safe region after a finite time by leveraging the batch least-squares identifier. To be specific, \cite{Taylor2020} constrains the state to a set smaller than the original safe set, where, roughly speaking,
 the difference between the two sets is the product of the reciprocals of adaption gains and the square of the parameter estimate errors that are not ensured to be zero. In our design, even though the state is also constrained in a subset of the original safe set during the adaptive learning process that is finished in a finite time, after which the state will be maintained in the original safe set. Regarding the difference  from the original set in \cite{Taylor2020}, a larger adaption gain can possibly avoid constraining the state to a much smaller set. Even though we do not have such a property in the adaptive learning process because  the notion of "adaptation gain" is not directly present in BaLSI, we can quicken the parameter estimation process by adjusting design parameters to reduce the duration time of the state in the subset.

3) The result is new even if only the nominal control design is considered, i.e., without advancing to adaptive design under uncertainties. In other words, this is also the first safe boundary control design for hyperbolic PDEs, as compared to the safe boundary control design for parabolic PDEs in \cite{Koga2023Safe}.
\subsection{Notation}
\begin{itemize}
\item The symbol $\mathbb Z_+$ denotes the set of all non-negative integers, $\mathbb N$ denotes the set $\{1,2,\cdots\}$, i.e., the natural numbers without $0$, and $\mathbb R_+:=[0,+\infty)$.

\item Let $U\subseteq \mathbb R^n$ be a set with non-empty interior and let $\Omega\subseteq\mathbb R$
be a set. By $C^0 (U;\Omega)$, we denote the class of continuous
mappings on $U$, which takes values in $\Omega$. By $C^k (U;\Omega)$, where
$k \ge 1$, we denote the class of continuous functions on $U$,
which have continuous derivatives of order $k$ on $U$ and take
values in $\Omega$.

\item We use the notation $L^2(0, 1)$ for the standard space of the equivalence
class of square-integrable, measurable functions $f:(0,1)\to\mathbb R$, with $\|f\|^2:=\int_0^1 f(x)^2 dx<+\infty$ for $f \in L^2(0, 1)$. For an integer $k\ge 1$,
$H^{k}(0,1)$ denotes the Sobolev space of
functions in $L^2(0, 1)$ with all its weak derivatives up to order $k$ in $L^2(0, 1)$.

\item Let $u: \mathbb R_+\times [0,1]\rightarrow \mathbb R$  be given. We use the notation $u[t]$ to denote the profile of $u$ at certain $t\ge 0$, i.e., $u[t]=u(x,t)$ for all $x\in[0,1]$.

    \item The notation ${f^{(i)}}{(t)}$ denote $i$ times derivatives of $f$. We use ${\alpha_x^{(i)}}{(x,t)}$ to denote $i$ times derivatives with respect to $x$ of $\alpha(x,t)$. Similarly, ${\alpha_t^{(i)}}{(x,t)}$ denote $i$ times derivatives with respect to $t$ of $\alpha(x,t)$.
        \item Define $\underline x_j:= [x_1,x_2,\cdots,x_j]^T$, and $\underline \Gamma^{(i)}(t):= [\Gamma^{(1)}(t),\Gamma^{(2)}(t),\cdots,\Gamma^{(i)}(t)]^T$.
\end{itemize}

For ease of presentation,  we omit or simplify the arguments of functions and functionals when no confusion arises.
\section{Problem Formulation}
As mentioned in Sec. \ref{sec:UAV}, the considered plant  is
\begin{align}
\dot Y(t) &= AY(t) + Bw(0,t),\label{eq:o1}\\
{z_t}(x,t) &=  - q_1{z_x}(x,t)+{d_1}w(x,t),\label{eq:o2}\\
{w_t}(x,t) &= q_2{w_x}(x,t)+{d_2}z(x,t),\label{eq:o3}\\
z(0,t) &=pw(0,t),\label{eq:o4}\\
w(1,t) &=  x_1(t),\label{eq:o5}\\
\dot x_j(t) &= x_{j+1}(t)+f_j(\underline x_j),~j=1,\cdots,m-1,\label{eq:o6}\\
\dot x_m(t)&= f_m(\underline x_m)+\sum_{i=0}^{m-1} \bar q_iz^{(i)}(1,t)+M^TY(t)+ U(t)\label{eq:o7}
\end{align}
$\forall (x,t) \in [0,1]\times[0,\infty)$, where $X^T(t)=[x_1,x_2,\cdots,x_m] \in \mathbb{R}^{m}$ and $Y^{T}(t)=[y_1,y_2,\cdots,y_n]\in \mathbb{R}^{n}$ are ODE states, the scalars $z(x,t)\in \mathbb{R}, w(x,t)\in \mathbb{R}$ are states of the PDEs. The function $U(t)$ is the control input to be designed. The actuator $X$-ODE  is a strict-feedback nonlinear system, where the nonlinearities $f_j$ satisfy Assumption \ref{as:f}.
The $Y$-ODE \eqref{eq:o1} is a linear model, where the matrix $A$, the column vector $B$ are in the form of
\begin{align}
A=&\left(
  \begin{array}{cccccc}
    0 & 1 & 0 & 0 & \cdots & 0 \\
    0 & 0 & 1 & 0 & \cdots & 0 \\
     &  & \vdots &  &  &  \\
    0 & 0 & 0 & 0 & \cdots & 1 \\
    l_1 & l_2 & l_3 & \cdots & l_{n-1} & l_n \\
  \end{array}
\right), B=\left(
             \begin{array}{c}
               0 \\
               0 \\
               \vdots \\
               0 \\
               b \\
             \end{array}
           \right),\label{eq:ABC}
\end{align}
with arbitrary constants $l_1$, $l_2$, $l_3$, $\ldots$ , $l_{n-1}$, $l_n$, and $b>0$ (without any loss of generality for $b<0$). This indicates that the $Y$-ODE is in the controllable form which covers many practical models.
Other plant parameters in \eqref{eq:o2}--\eqref{eq:o7}, i.e., $d_1,d_2,p,q,\bar q_i, i=1,\cdots, m-1$, and $q_1>0,q_2>0$ that denote the transport speed, as well as the $n$-dimensional row vector $M^T$, are also arbitrary. The parameter $b$ that exists in the distal ODE and the coefficients $d_1,d_2$ of the PDE in-domain couplings that are the potentially destabilizing terms are unknown. The unknown parameters $d_1,d_2,b$  satisfy Assumption \ref{as:bound}.

\textbf{Control objective}: To exponentially regulate the overall system, including the plant $w$-PDE, $z$-PDE, $Y$-ODE and $X$-ODE,  and enforce "safety", defined here
as the \textbf{non-negativity} (without any loss of generality for non-positivity or not exceeding a nonzero setpoint) \textbf{of the output} of the distal ODE, i.e., the state furthest from the control input:
$$y_1(t)\ge0,~\forall t\in[0,\infty).$$

We conduct the control design based on such a sandwich PDE system \eqref{eq:o1}--\eqref{eq:o7}, not only because it covers the mathematical model of the UAV with cable hanging load mentioned in Sec. \ref{sec:UAV}, but also because the type of system is general since it comprises the coupled hyperbolic PDEs, linear and strict-feedback nonlinear ODEs, as well as the cascaded structures of PDE-ODE and ODE-PDE that appear in numerous PDE control articles. This safe control design is challenging since, between the control input and the state to be safely regulated,  there exist not only high-relative-degree nonlinear and linear ODEs  but also potentially unstable heterodirectional coupled hyperbolic PDEs. Besides, the unknown parameters associated with the unstable source in the PDE domain and input signals of the ODE to be kept safe make the task more difficult.
\begin{assm}\label{as:f}
The functions $f_j$ are $m-j$ times differentiable, and $f_j(\emph{\textbf{0}})=0$.
\end{assm}
\begin{assm}\label{as:bound}
The bounds of the unknown parameters $d_1,d_2,b$ are known and arbitrary, i.e.,
$\underline d_1\le d_1\le \overline d_1$, $\underline d_2\le d_2\le \overline d_2$, $0<\underline b\le b\le \overline b$.
\end{assm}
During the time interval $[0,\frac{1}{q_2}]$ no control action can reach the $Y(t)$ ODE due to the $w$-transport PDE. Therefore, we have to impose constraints on the initial states to ensure that the output $y_1(t)=C_1Y(t)$ stays in the safe region before the control action begins to regulate the $Y(t)$-ODE, where $C_i$ is a vector with $i$th entry as 1 and other entries are zero.
\begin{assm}\label{as:initial}
The initial values of the ODE and PDE states satisfy i) $y_1(0)\ge 0$; ii) $\Pi(\varsigma)= C_1e^{A\frac{\varsigma}{q_2}}Y(0)+C_1\frac{1}{q_2}e^{A\frac{\varsigma}{q_2}}\int_0^\varsigma e^{-A\frac{x}{q_2}}B(w(x,0) - \int_0^{x} F(x,y)z(y,0)dy
-  \int_0^{x} H(x,y)w(y,0)dy) dx\ge 0$ for $0< \varsigma< 1$ and $\Pi(1)> 0$, where the explicit expressions of $F(x,y)$ and $H(x,y)$ are given in \eqref{eq:F}, \eqref{eq:H}.
\end{assm}
In Assumption \ref{as:initial}, the condition i) implies there is no additional restriction on $y_1(0)$ but the original safe set; the condition ii)  is the sufficient and necessary condition of $y_1(t)\ge0$ on $t\in(0,\frac{1}{q_2})$ and $y_1(\frac{1}{q_2})>0$, i.e., before the control action  begins to regulate the $Y(t)$-ODE, which will be proven in Lemma \ref{lem:Y} latter.  Besides, the following assumption on the actuator state makes the control action for $Y$-ODE begin within the region of safe regulation.
\begin{assm}\label{as:initialx1}
The initial value of the actuator state $x_1(0)$ satisfies $x_1(0)>\int_0^1 {\Psi}(1,y)z(y,0)dy
+  \int_0^1 {\Phi}(1,y)w(y,0)dy +  \lambda (1){Y}(0)$, where explicit ${\Psi}(1,y)$, ${\Phi}(1,y)$, $\lambda (1)$  are given by \eqref{eq:kerf},  \eqref{eq:traker3}, \eqref{eq:exk1}, \eqref{eq:exk2}.
\end{assm}
\section{Nominal Safe Control design}\label{sec:TandB}
\subsection{First (ODE) nonundershooting backstepping transformation}\label{sec:tran1}
{Following \cite{KrsticBement2006}, we apply the transformation
\begin{align}
&z_i(t)=y_i(t)-g_{i-1}(\underline y_{i-1}(t)),~~i=1,\cdots,n\label{eq:zi}\\
&g_{0}=0,\label{eq:g0}\\
&g_{i}(\underline y_{i}(t))=-\kappa_iz_i(t)+\sum_{j=1}^{i-1}\frac{\partial g_{i-1}}{\partial{y_j}}y_{j+1},~i=1,\cdots,n-1\label{eq:gi}
\end{align}
where the positive design parameters $\kappa_i$, $i=1,\ldots, n$ are
to be determined later, to convert the distal $Y$-ODE \eqref{eq:o1} with \eqref{eq:ABC} into
\begin{align}
\dot Z(t)=A_{\rm z} Z(t)+Bw(0,t)-BK^TY(t)\label{eq:ZA}
\end{align}
where
\begin{align}
A_{\rm z}=\left(
  \begin{array}{ccccccc}
    -\kappa_1 & 1 & 0 & 0 & \cdots & 0& 0 \\
    0 & -\kappa_2 & 1 & 0 & \cdots & 0& 0 \\
     0 & 0 & -\kappa_3 & 1 & \cdots & 0& 0 \\
     &  & \vdots &  &  &  &\\
    0 & 0 & 0 & 0 & \cdots & -\kappa_{n-1} &1 \\
    0 & 0 & 0 & \cdots & 0  &0 &-\kappa_n \\
  \end{array}
\right)\label{eq:Az}
\end{align}
is Hurwitz, and the constant row vector
\begin{align}
&K^T=\frac{1}{b}\bigg[-l_1+\kappa_n\frac{\partial g_{n-1}}{\partial{y_{1}}},-l_2+\frac{\partial g_{n-1}}{\partial{y_{1}}}+\kappa_n\frac{\partial g_{n-1}}{\partial{y_{2}}},\ldots,\notag\\&-l_{n-1}+\frac{\partial g_{n-1}}{\partial{y_{n-2}}}+\kappa_n\frac{\partial g_{n-1}}{\partial{y_{n-1}}},-l_n+\frac{\partial g_{n-1}}{\partial{y_{n-1}}}-\kappa_n\bigg]_{1\times n}.\label{eq:K}
\end{align}
Note: if $n=1$, then $K$ in \eqref{eq:K} is equal to $-l_1-\kappa_1$, i.e., inserting $n=1$ into the last term in \eqref{eq:K} with recalling $g_0=0$. {Similarly, if $n=2$, then $K$ is a two-dimensional vector consisting of the first and last terms in \eqref{eq:K}.} Considering the linear system \eqref{eq:o1}, $g_{n-1}(\underline y_{n-1}(t))$ can be expressed as a linear combination of $y_{i}$, $i=1,\cdots,n-1$, i.e., $g_{n-1}(\underline y_{n-1}(t))=\sum_{i=1}^{n-1}\frac{\partial g_{n-1}}{\partial{y_{i}}}y_{i}$, where $\frac{\partial g_{n-1}}{\partial{y_{i}}}$ are constant. {This fact has been used in deriving \eqref{eq:K}.}

The transformed states $Z(t)=[z_1,\ldots,z_n]^T$ are indeed high-relative-degree ODE CBFs since,  they need to be kept non-negative on $t\in[\frac{1}{q_2},\infty)$ by the control design for
achieving the original safety goal. Please note that the control design guarantees the CBFs $z_i$ non-negative from $t=\frac{1}{q_2}$ instead of $t=0$ because the control action needs to take the time $\frac{1}{q_2}$ to pass through the first-order hyperbolic PDEs and begins to regulate the distal ODE until $t=\frac{1}{q_2}$.}
\subsection{Second PDE backstepping transformation}
In order to remove the in-domain coupling  destabilizing terms from  the $2\times 2$ hyperbolic PDE system and compensate the term  $BK^TY(t)$ in \eqref{eq:ZA}, we introduce the following backstepping transformation \cite{Meglio2017Stabilization}:
\begin{align}
\alpha (x,t) =&z(x,t) - \int_0^x {\phi}(x,y)z(y,t)dy\notag\\
& -  \int_0^x {\varphi}(x,y)w(y,t)dy -  \gamma (x){Y}(t),\label{eq:contran1a}\\
\beta (x,t) =& w(x,t) - \int_0^x {\Psi}(x,y)z(y,t)dy \notag\\
&-  \int_0^x {\Phi}(x,y)w(y,t)dy -  \lambda (x){Y}(t)\label{eq:contran1b}
\end{align}
where ${\phi},\varphi,\gamma,\Psi,\Phi,\lambda$ are defined in Appendix\ref{sec:ker}. We named $\beta$-PDE as "PDE CBF", because, as is typical with CBFs, the function $\beta(x,t)$ should be ensured non-negative (for $t\in[\frac{1}{q_2},\infty)\times x\in[0,1]$) in pursuing the original safety goal, which will be seen clear later.

By \eqref{eq:contran1a}, \eqref{eq:contran1b}, the system \eqref{eq:o2}--\eqref{eq:o5} with \eqref{eq:ZA} is converted into
\begin{align}
\dot Z (t) =& A_{\rm z}Z (t) + {B}\beta (0,t), \label{eq:targ5}\\
  \alpha (0,t) =& p\beta  (0,t),\label{eq:targ2}\\
{\alpha _t}(x,t) =&  - {q_1}{\alpha _x}(x,t),\label{eq:targ1}\\
{\beta _t}(x,t) =& {q_2}{\beta _x}(x,t),\label{eq:targ4}\\
\beta(1,t) =& x_1(t)-\Gamma(t)\label{eq:targ8}
\end{align}
where the function $\Gamma(t)$ is
\begin{align}
\Gamma(t)=&\int_0^1 {\Psi}(1,y)z(y,t)dy +  \int_0^1 {\Phi}(1,y)w(y,t)dy\notag\\
& + \lambda (1){Y}(t).\label{eq:gamma}
\end{align}
\subsection{Third (ODE) nonundershooting  backstepping transformation}\label{sec:ODEcontroldesign}
Similar to Sec. \ref{sec:tran1}, we introduce the following modified backstepping transformations for proximal $X$-ODE,
\begin{align}
&h_i(t)=x_i(t)-\tau_{i-1}-\Gamma^{(i-1)}(t),~~i=1,\cdots,m\label{eq:ncti}\\
&\tau_{0}=0,\label{eq:tau0}\\
&\tau_{i}(\underline x_{i}(t),\underline \Gamma^{(i-1)}(t))=-c_ih_i(t)-f_i(\underline x_{i}(t))\notag\\&+\sum_{j=1}^{i-1}\bigg[\frac{\partial\tau_{i-1}}{\partial{x_j}}(x_{j+1}+f_j(\underline x_{j}(t)))+\frac{\partial\tau_{i-1}}{\partial{\Gamma^{(j-1)}(t)}}\Gamma^{(j)}(t)\bigg],\notag\\&i=1,\cdots,m-1\label{eq:taui}
\end{align}
where the transformed states $h_i(t)$ of the proximal ODE are also high-relative-degree CBFs since, in addition to the CBFs $z_i(t)$ in \eqref{eq:zi} and the PDE CBF $\beta(x,t)$ in \eqref{eq:contran1b}, $h_i(t)$ need to be kept non-negative for all time for achieving the original safety goal. The positive constants $c_1,\ldots,c_m$ are design parameters to be determined later for ensuring both safety and
stability.
Recalling \eqref{eq:gamma}, the function $\Gamma^{(i)}(t)$ is obtained as
\begin{align}
&\Gamma^{(i)}(t)
=-\sum_{j=0}^{j=i-1}q_1R_{i-1-j}(1)z_{t}^{(j)}(1,t)\notag\\&+\sum_{j=0}^{j=i-1}q_1R_{i-1-j}(0)z_{t}^{(j)}(0,t)+\int_0^1 R_i(y)z(y,t)dy\notag\\
&+\sum_{j=0}^{j=i-1}q_2P_{i-1-j}(1)w_{t}^{(j)}(1,t)\notag\\&-\sum_{j=0}^{j=i-1}\left(q_2P_{i-1-j}(0)-\lambda (1)A^{i-1-j}B\right)w_{t}^{(j)}(0,t)\notag\\&+\int_0^1 P_i(y)w(y,t)dy+\lambda (1) A^{i}{Y}(t),~i=1,\cdots,m\label{eq:nctG}
\end{align}
where the functions $R_i$, $P_i$ are defined by
\begin{align}
R_i(y)&=q_1R_{i-1}'(y)+d_2P_{i-1}(y),~i=1,\cdots,m\label{eq:Ri}\\
P_i(y)&=-q_2P_{i-1}'(y)+d_1R_{i-1}(y),~i=1,\cdots,m\\
R_0(y)&={\Psi}(1,y),~
P_0(y)={\Phi}(1,y),\label{eq:P0}
\end{align}
with the explicit solutions of ${\Psi}(1,y)$ and ${\Phi}(1,y)$ given by  \eqref{eq:exk1}, \eqref{eq:exk2} and \eqref{eq:traker3}, \eqref{eq:F}, \eqref{eq:H}. Please note that the $m$ order derivatives of ${\Psi}(1,y)$ and ${\Phi}(1,y)$ exist according to Theorem 5 in \cite{Vazquez2011Backstepping}.

Through the transformations \eqref{eq:zi}--\eqref{eq:gi}, \eqref{eq:contran1a}, \eqref{eq:contran1b}, \eqref{eq:ncti}--\eqref{eq:nctG}, now we convert the original system \eqref{eq:o1}--\eqref{eq:o7} to the following target system
\begin{align}
\dot Z(t) &= A_{\rm Z}Z(t) + B\beta (0,t),\label{eq:target1}\\
{\alpha _t}(x,t) &=  - q_1{\alpha _x}(x,t),\label{eq:target2}\\
{\beta _t}(x,t) &= q_2{\beta _x}(x,t),\label{eq:target3}\\
\alpha (0,t) &= p{\beta}(0,t),\label{eq:target4}\\
\beta(1,t)&=h_1(t),\label{eq:target5}\\
\dot h_i(t)&=-c_i h_i(t)+h_{i+1}(t), i=1,\cdots, m-1\label{eq:targetym-1}\\
\dot h_m(t)&=-c_m h_m(t)\label{eq:targetym}
\end{align}
by choosing the control input as
\begin{align}
U(t)=&\tau_m-{\sum_{i=0}^{m-1} \bar q_iz_t^{(i)}(1,t)-M^TY(t)}+\Gamma^{(m)}(t)\notag\\:= &\mathcal U(\chi(t);\theta)\label{eq:U1}
\end{align}
where $\chi(t)$, which denotes all the system signals used in the control law, can be written as
\begin{align}
\chi(t)=&\bigg[x_1(t),\cdots, x_{m}(t),w(0,t),\cdots ,w_t^{(m-1)}(0,t),\notag\\& z(1,t),\cdots ,z_t^{(m-1)}(1,t),\int_0^1 \mathcal R (x) w(x,t) dx,\notag\\&  \int_0^1 \mathcal P (x)z(x,t)dx, y_1(t),\cdots,y_n(t)\bigg],\label{eq:chi}
\end{align}
for some $\mathcal R (x), \mathcal P (x)$ consisting of $R_i(x),P_i(x)$ defined in \eqref{eq:Ri}--\eqref{eq:P0}.
Writing
\begin{align}\theta=[d_1,d_2,b]^T\label{eq:theta}\end{align}
after $``;"$ in $\mathcal U(\chi(t);\theta)$  emphasizes the fact that the control law depends on the unknown parameters $d_1,d_2,b$. The control law is explicit considering the explicit solutions of ${\Psi}(1,y)$ and ${\Phi}(1,y)$ are obtained in \eqref{eq:exk1}, \eqref{eq:exk2} and \eqref{eq:traker3}, \eqref{eq:F}, \eqref{eq:H}.

The calculation details in the third transformation is shown in Appendix\ref{sec:odeback}.
\subsection{Selection of nonundershooting design parameters}\label{sec:designparameter}
{We choose the design parameters $\kappa_1,\ldots,\kappa_{n-1}$ ($\kappa_n>0$ is  free) to satisfy
\begin{align}
\kappa_{i}&>
\frac{(\sum_{j=1}^{i-1}\frac{\partial g_{i-1}}{\partial{y_j}}C_{j+1}-C_{i+1})Y(\frac{1}{q_2})}{C_{i}Y\left(\frac{1}{q_2}\right)-g_{i-1}\left(\underline C_{i-1}Y\left(\frac{1}{q_2}\right)\right)}:=\check \kappa_i(b)\label{eq:kappai}
\end{align}
for $i=1,\cdots, n-1$, which includes the unknown parameter $b$, where $Y(\frac{1}{q_2})$ is expressed as the initial values of $z$-PDE, $w$-PDE, and $Y$-ODE as
\begin{align*}
Y(\frac{1}{q_2})=&e^{A\frac{1}{q_2}}Y(0)+\int_0^{\frac{1}{q_2}} e^{A({\frac{1}{q_2}}-\tau)}B\bigg(w(q_2\tau,0)\notag\\
 &- \int_0^{q_2\tau} F(q_2\tau,y)z(y,0)dy\notag\\& -  \int_0^{q_2\tau} H(q_2\tau,y)w(y,0)dy\bigg) d\tau,
\end{align*}
which will be seen clearly in the proof of Lemma \ref{lem:Y}. The expressions of $F(x,y)$ and $H(x,y)$ are given in \eqref{eq:F}, \eqref{eq:H}.  The purpose of the selection of the design parameters  \eqref{eq:kappai} is to make CBFs $z_i(t)$, $i=2,\cdots,n$, positive at the time $t=\frac{1}{q_2}$ when the control action reaches the distal ODE, using initial values of the plant states. This will be clear in the proof of Lemma  \ref{cl:zi0}.}

The design parameters $c_1,\ldots,c_m$ are selected as
\begin{align}
c_i>&\max\{2, \check c_i(\theta)\},~~i=1,\cdots,m-1,~~c_m>1\label{eq:ci}
\end{align}
where
\begin{align}
&\check c_i(\theta)=\frac{1}{x_i(0)-\tau_{i-1}(\underline x_{i-1}(0),\underline \Gamma^{(i-2)}(0))-\Gamma^{(i-1)}(0)}\notag\\&\times\bigg[-x_{i+1}(0)-f_i(\underline x_i(0))\notag\\&+\sum_{j=1}^{i-1}\bigg[\frac{\partial\tau_{i-1}(\underline x_{i-1}(0),\underline \Gamma^{(i-2)}(0))}{\partial{x_j}(0)}(x_{j+1}(0)+f_j(\underline x_j(0))\notag\\&+\frac{\partial\tau_{i-1}(\underline x_{i-1}(0),\underline \Gamma^{(i-2)}(0))}{\partial{\Gamma^{(j-1)}(0)}}\Gamma^{(j)}(0)\bigg]+\Gamma^{(i)}(0)\bigg],\label{eq:underc}
\end{align}
for  $i=1,\cdots,m-1$, which includes the unknown parameters $\theta=[d_1,d_2,b]^T$.
The purpose of the choices of design parameters \eqref{eq:ci} is to make CBFs $h_i(t)$, $i=2,\cdots,m$, positive at $t=0$ and ensure the stability through the Lyapunov analysis, which will be clear later in the proofs of Lemma \ref{cl:hi0} and Theorem \ref{th:th1}, respectively. Please note that the denominators in \eqref{eq:underc} and \eqref{eq:kappai} are nonzero, which will be clear later in the proof of Lemmas \ref{cl:zi0} and \ref{cl:hi0}.

Summary of the nominal safe control design: the proposed nominal safe control design for sandwich hyperbolic PDE systems includes the following two steps. 1) Applying the backstepping transformations \eqref{eq:zi}--\eqref{eq:gi}, \eqref{eq:contran1a}, \eqref{eq:contran1b}, \eqref{eq:ncti}--\eqref{eq:taui}  to convert the original plant into the target system \eqref{eq:target1}--\eqref{eq:targetym} that is not only exponentially stable but also has safe property in the sense that the states, i.e., the CBFs,  are non-negative provided that their initial values (the values at the first time when the control action comes into play) are non-negative. 2) Choose the design parameters $c_i$,  $i=1,\ldots,m-1$ and $\kappa_i$,  $i=1,\ldots,n-1$ to initialize positively the CBFs $h_i(t)$, $z_i(t)$.
\subsection{Result with nominal safe control}
For the time interval $[0,\frac{1}{q_2}]$, no control action reaches
the distal $Y (t)$-ODE, whose safety is ensured under the given initial conditions, which is shown in the following lemma.
\begin{lema}\label{lem:Y}
For the time period no control action reaches the $Y$-ODE, $y_1(t)$ is kept in the safe region, i.e.,
\begin{align}
y_1(t)\ge 0,~\forall t\in\left [0,\frac{1}{q_2}\right)~and~~y_1\left (\frac{1}{q_2}\right)> 0\label{eq:y1q2}
\end{align}
under Assumption \ref{as:initial} regarding the initial data.
\end{lema}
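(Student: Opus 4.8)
The plan is to solve the distal $Y$-ODE \eqref{eq:o1} explicitly by variation of constants and reduce the whole statement to a computation of the boundary value $w(0,s)$ from the initial data alone. Since \eqref{eq:o1} is linear and driven only by $w(0,t)$, one has $Y(t)=e^{At}Y(0)+\int_0^t e^{A(t-s)}Bw(0,s)\,ds$, so that $y_1(t)=C_1Y(t)$. The entire content of the lemma then rests on the fact that during the window $t\in[0,\frac{1}{q_2}]$ the actuated boundary $w(1,t)=x_1(t)$ has not yet had time to reach $x=0$ through the $w$-transport, so $w(0,s)$ is fixed entirely by the initial profiles $z(\cdot,0)$ and $w(\cdot,0)$.

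To make this precise I would decouple the autonomous $(z,w)$-subsystem \eqref{eq:o2}--\eqref{eq:o4}, which does not involve $Y$, by the Volterra transformation $\tilde w(x,t)=w(x,t)-\int_0^x F(x,y)z(y,t)\,dy-\int_0^x H(x,y)w(y,t)\,dy$ with the kernels $F,H$ of \eqref{eq:F}, \eqref{eq:H} taken as in \cite{Vazquez2011Backstepping}, which are designed to cancel the in-domain couplings $d_1,d_2$ and render the source-free transport equation $\tilde w_t=q_2\tilde w_x$. At $x=0$ both integrals vanish, hence $\tilde w(0,t)=w(0,t)$; solving this transport equation along its characteristics $x+q_2 t=\mathrm{const}$ and tracing back to the initial line $t=0$---which is reachable exactly while $q_2 t\le 1$---gives $w(0,t)=\tilde w(q_2 t,0)=w(q_2 t,0)-\int_0^{q_2 t}F(q_2 t,y)z(y,0)\,dy-\int_0^{q_2 t}H(q_2 t,y)w(y,0)\,dy$ for every $t\in[0,\frac{1}{q_2}]$.

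Substituting this expression for $w(0,s)$ into the variation-of-constants formula and applying the change of variables $x=q_2 s$ in the convolution integral, I would factor $e^{At}$ out and project onto the first coordinate with $C_1$ to obtain precisely $y_1(t)=\Pi(q_2 t)$ on $[0,\frac{1}{q_2}]$, with $\Pi$ the functional in Assumption \ref{as:initial}. The conclusion \eqref{eq:y1q2} then follows by reading off three cases: $t=0$ gives $\Pi(0)=C_1Y(0)=y_1(0)\ge 0$ by condition i); each $\varsigma=q_2 t\in(0,1)$ gives $y_1(t)=\Pi(\varsigma)\ge 0$ by condition ii); and $\varsigma=1$ gives $y_1(\frac{1}{q_2})=\Pi(1)>0$. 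This same identity, evaluated at $t=\frac{1}{q_2}$, is what expresses $Y(\frac{1}{q_2})$ through the initial data as used in Sec. \ref{sec:designparameter}.

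The step that carries all the real work is verifying that $F,H$ genuinely yield the source-free transport equation for $\tilde w$---that is, that they solve the associated kernel hyperbolic equations with the boundary conditions inherited from $z(0,t)=pw(0,t)$ and $w(1,t)=x_1(t)$, so that neither a $z$-term nor an in-domain source survives in the $\tilde w$ dynamics. This is standard $2\times2$ backstepping, whence the appeal to \cite{Vazquez2011Backstepping}; once it is in hand the remaining characteristic tracing and change of variables are routine, and because the argument is a finite-horizon exact representation no smallness or Lyapunov estimate is needed.
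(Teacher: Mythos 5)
Your proposal is correct and follows essentially the same route as the paper: the paper introduces the very same Volterra transformation (denoted $\eta$ there, your $\tilde w$) with kernels $F,H$ from \eqref{eq:ex5}--\eqref{eq:ex8}, solves the resulting source-free transport equation by characteristics to express $w(0,t)$ through the initial data, and reads off \eqref{eq:y1q2} from Assumption \ref{as:initial} after the substitution $\varsigma=q_2 t$. The only cosmetic difference is that you make the identification $y_1(t)=\Pi(q_2t)$ and the $t=0$ case via condition i) fully explicit, which the paper leaves implicit.
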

\begin{proof}
Defining
\begin{align}
\eta (x,t) = &w(x,t) - \int_0^x F(x,y)z(y,t)dy\notag\\& -  \int_0^x H(x,y)w(y,t)dy\label{eq:eta}
\end{align}
where $F(x,y),H(x,y)$ are given by \eqref{eq:ex5}--\eqref{eq:ex8},
recalling \eqref{eq:o1}--\eqref{eq:o4}, we have
\begin{align}
\dot Y(t) &= AY(t) + B\eta (0,t),\\
{\eta_t}(x,t) &= q_2{\eta_x}(x,t).
\end{align}
We do not display the right boundary condition of $\eta$ here because it is not required in analyzing the evolution of $y_1(t)$
on $t\in[0,\frac{1}{q_2}]$, which only depends on the initial values of $Y,z,w$. By recalling \eqref{eq:eta}, the solution of $Y(t)$ is obtained as
\begin{align}
Y(t)=&e^{At}Y(0)+\int_0^t e^{A(t-\tau)}B\bigg(w(q_2\tau,0) \notag\\&- \int_0^{q_2\tau} F(q_2\tau,y)z(y,0)dy \notag\\
&-  \int_0^{q_2\tau} H(q_2\tau,y)w(y,0)dy\bigg) d\tau,~0\le t\le \frac{1}{q_2}.\label{eq:Yuncon}
\end{align}
Recalling Assumption \ref{as:initial} with letting $x=q_2\tau$ and $\varsigma=q_2t$, we know from \eqref{eq:Yuncon} that
$y_1(t)=C_1Y(t)\ge 0,~~t\in[0,\frac{1}{q_2})$ and $y_1(\frac{1}{q_2})>0$.
\end{proof}
Next, we present two lemmas regarding initializing positively the CBFs $z_i(t)$, $h_i(t)$ of distal and proximal ODEs, at $t=\frac{1}{q_2}$ and $t=0$,  by the selection of the design parameters $\kappa_i$ and $c_i$ in Sec. \ref{sec:designparameter}, respectively.
{\begin{lema}\label{cl:zi0}
The positivity of $z_i(\frac{1}{q_2})$, i.e., the values of high-relative-degree ODE CBFs $z_i$ at the time instant when the control action reaches the distal ODE, is ensured, i.e., $z_i(\frac{1}{q_2})>0$, $i=1,\cdots,n$, under the design parameters $\kappa_i$, $i=1,\cdots,n-1$ satisfying \eqref{eq:kappai}.
\end{lema}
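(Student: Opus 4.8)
The plan is to prove the claim by induction on $i$, exploiting the recursive structure of the nonundershooting transformation \eqref{eq:zi}--\eqref{eq:gi} together with the threshold \eqref{eq:kappai} for $\kappa_i$; this is the standard nonovershooting argument of \cite{KrsticBement2006}, now anchored at $t=\frac{1}{q_2}$ rather than $t=0$ because of the transport delay through the PDEs. The base case is immediate: since $g_0=0$ by \eqref{eq:g0}, we have $z_1(t)=y_1(t)$, so $z_1(\frac{1}{q_2})=y_1(\frac{1}{q_2})>0$ by Lemma \ref{lem:Y}. For the inductive step I would first combine \eqref{eq:zi} at index $i+1$ with the definition \eqref{eq:gi} of $g_i$ to obtain the recursion
\begin{align*}
z_{i+1}(t)=y_{i+1}(t)+\kappa_i z_i(t)-\sum_{j=1}^{i-1}\frac{\partial g_{i-1}}{\partial y_j}y_{j+1}(t),\quad i=1,\cdots,n-1,
\end{align*}
where the coefficients $\frac{\partial g_{i-1}}{\partial y_j}$ are constants because the $Y$-ODE and the transformation are linear.

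The key observation is that the threshold $\check\kappa_i(b)$ in \eqref{eq:kappai} is exactly the critical ratio at which $z_{i+1}(\frac{1}{q_2})$ changes sign. Using $C_kY(\frac{1}{q_2})=y_k(\frac{1}{q_2})$ and $g_{i-1}(\underline C_{i-1}Y(\frac{1}{q_2}))=g_{i-1}(\underline y_{i-1}(\frac{1}{q_2}))$, the numerator of \eqref{eq:kappai} equals $\sum_{j=1}^{i-1}\frac{\partial g_{i-1}}{\partial y_j}y_{j+1}(\frac{1}{q_2})-y_{i+1}(\frac{1}{q_2})$ while its denominator equals $y_i(\frac{1}{q_2})-g_{i-1}(\underline y_{i-1}(\frac{1}{q_2}))=z_i(\frac{1}{q_2})$. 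Hence, assuming the induction hypothesis $z_i(\frac{1}{q_2})>0$, the denominator is positive, so multiplying \eqref{eq:kappai} through by $z_i(\frac{1}{q_2})$ preserves the inequality and gives $\kappa_i z_i(\frac{1}{q_2})>\sum_{j=1}^{i-1}\frac{\partial g_{i-1}}{\partial y_j}y_{j+1}(\frac{1}{q_2})-y_{i+1}(\frac{1}{q_2})$, which by the recursion above is precisely $z_{i+1}(\frac{1}{q_2})>0$. This closes the induction and yields $z_i(\frac{1}{q_2})>0$ for all $i=1,\cdots,n$.

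The main obstacle — really the only delicate point — is the sign handling when clearing the denominator in \eqref{eq:kappai}: the rewriting of the threshold as a critical ratio, and the preservation of the inequality direction, are valid only because the denominator $z_i(\frac{1}{q_2})$ is strictly positive, which is exactly the hypothesis furnished one step earlier. I would record in passing that this is what guarantees the denominator in \eqref{eq:kappai} is nonzero (as promised in the text), and note that a legitimate positive design parameter $\kappa_i$ satisfying \eqref{eq:kappai} always exists: if $\check\kappa_i(b)<0$ any positive $\kappa_i$ works, and if $\check\kappa_i(b)\ge 0$ one simply takes $\kappa_i>\check\kappa_i(b)$. The caveat to flag is that $\check\kappa_i(b)$ depends on the unknown $b$ through $Y(\frac{1}{q_2})$; in the nominal design of this section $b$ is treated as known, and this dependence on the uncertainty is what must be handled in the adaptive redesign later.
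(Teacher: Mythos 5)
Your proposal is correct and follows essentially the same induction argument as the paper: the same base case via Lemma \ref{lem:Y}, the same recursion $z_{i+1}=y_{i+1}+\kappa_i z_i-\sum_{j=1}^{i-1}\frac{\partial g_{i-1}}{\partial y_j}y_{j+1}$, and the same use of the induction hypothesis $z_i(\frac{1}{q_2})>0$ to clear the denominator of \eqref{eq:kappai} without flipping the inequality. The added remarks on the existence of an admissible positive $\kappa_i$ and the $b$-dependence of the threshold are accurate but do not change the route.
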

\begin{proof}
We prove $z_{i}(\frac{1}{q_2})>0$, $i=1,\cdots,n$ by the induction method as well.
Recalling \eqref{eq:zi}, \eqref{eq:g0}, and \eqref{eq:y1q2},  we know the base case
\begin{align}
z_1\left(\frac{1}{q_2}\right)=y_1 \left(\frac{1}{q_2}\right)>0.\label{eq:z1less0}
\end{align}
We then show the induction step: if $z_i(\frac{1}{q_2})>0$,  then $z_{i+1}(\frac{1}{q_2})>0$ under the choices of  $\kappa_i$ in \eqref{eq:kappai}. According to \eqref{eq:zi}, \eqref{eq:gi}, we know
\begin{align}\label{eq:zi10}
&z_{i+1}\left(\frac{1}{q_2}\right)
=y_{i+1}\left(\frac{1}{q_2}\right)+k_{i}z_{i}\left(\frac{1}{q_2}\right)-\sum_{j=1}^{i-1}\frac{\partial g_{i-1}}{\partial{y_j}}y_{j+1}\left(\frac{1}{q_2}\right)\notag\\
&=\bigg[C_{i+1}-\sum_{j=1}^{i-1}\frac{\partial g_{i-1}}{\partial{y_j}}C_{j+1}\bigg]Y\left(\frac{1}{q_2}\right)\notag\\&+\kappa_i\bigg[C_{i}Y\left(\frac{1}{q_2}\right)-g_{i-1}\left(\underline C_{i-1}Y\left(\frac{1}{q_2}\right)\right)\bigg].
\end{align}
Inserting \eqref{eq:kappai} where the denominator $C_{i}Y\left(\frac{1}{q_2}\right)-g_{i-1}\left(\underline C_{i-1}Y\left(\frac{1}{q_2}\right)\right)=y_i(\frac{1}{q_2})-g_{i-1}(\underline y_{i-1}(\frac{1}{q_2}))=z_i(\frac{1}{q_2})>0$ which is obtained from the inductive hypothesis $z_i(\frac{1}{q_2})>0$, we have from \eqref{eq:zi10} that $z_{i+1}(\frac{1}{q_2})>0$. Therefore, recalling the base case $z_1(\frac{1}{q_2})>0$ in \eqref{eq:z1less0}, together with the induction step proved above, we immediately conclude that $z_{i+1}(\frac{1}{q_2})>0$ for $i=1,\cdots,n-1$ under the choices of  $\kappa_i$ in \eqref{eq:kappai} where the denominators are nonzero (positive).
\end{proof}}
\begin{lema}\label{cl:hi0}
The high-relative-degree ODE CBFs $h_i$ are initialized positively, i.e., $h_i(0)>0$, $i=1,\cdots,m$, under the design parameters $c_i$, $i=1,\cdots,m-1$ satisfying \eqref{eq:ci}.
\end{lema}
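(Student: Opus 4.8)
The plan is to mirror the induction that proved Lemma \ref{cl:zi0}, but now at $t=0$ for the proximal-ODE CBFs $h_i$, using the transformation \eqref{eq:ncti}--\eqref{eq:taui} together with the parameter choice \eqref{eq:ci} built from $\check c_i(\theta)$ in \eqref{eq:underc}. First I would establish the base case $h_1(0)>0$: from \eqref{eq:ncti} with $i=1$ and the convention $\tau_0=0$ in \eqref{eq:tau0}, we have $h_1(0)=x_1(0)-\Gamma^{(0)}(0)=x_1(0)-\Gamma(0)$, and recalling the definition of $\Gamma$ in \eqref{eq:gamma} this is exactly the quantity that Assumption \ref{as:initialx1} requires to be strictly positive. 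So $h_1(0)>0$ follows directly from that assumption.

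For the induction step I would assume $h_i(0)>0$ and show $h_{i+1}(0)>0$. Writing \eqref{eq:ncti} at $i+1$ gives $h_{i+1}(0)=x_{i+1}(0)-\tau_i(\underline x_i(0),\underline\Gamma^{(i-1)}(0))-\Gamma^{(i)}(0)$. Substituting the definition of $\tau_i$ from \eqref{eq:taui}, the term $-c_i h_i(0)$ appears on the right-hand side, so that
\begin{align}
h_{i+1}(0)=&\,x_{i+1}(0)+c_i h_i(0)+f_i(\underline x_i(0))\notag\\
&-\sum_{j=1}^{i-1}\Big[\tfrac{\partial\tau_{i-1}}{\partial x_j}(x_{j+1}(0)+f_j(\underline x_j(0)))\notag\\
&\quad+\tfrac{\partial\tau_{i-1}}{\partial\Gamma^{(j-1)}(0)}\Gamma^{(j)}(0)\Big]-\Gamma^{(i)}(0).\notag
\end{align}
The key algebraic observation is that the definition of $\check c_i(\theta)$ in \eqref{eq:underc} is precisely the value of $c_i$ that would make this expression vanish: the bracketed factor multiplying $\tfrac{1}{h_i(0)}$ in \eqref{eq:underc} equals the negative of all the $c_i$-free terms above, with $h_i(0)=x_i(0)-\tau_{i-1}(0)-\Gamma^{(i-1)}(0)$ serving as the denominator. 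Hence $h_{i+1}(0)=(c_i-\check c_i(\theta))\,h_i(0)$, and since \eqref{eq:ci} enforces $c_i>\check c_i(\theta)$ while the inductive hypothesis gives $h_i(0)>0$, we conclude $h_{i+1}(0)>0$.

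The main obstacle is bookkeeping rather than conceptual: I must verify that the denominator $x_i(0)-\tau_{i-1}(\underline x_{i-1}(0),\underline\Gamma^{(i-2)}(0))-\Gamma^{(i-1)}(0)$ in \eqref{eq:underc} is nonzero so that $\check c_i(\theta)$ is well-defined, but this denominator is exactly $h_i(0)$, which is strictly positive by the inductive hypothesis, so the division is legitimate and $\check c_i(\theta)$ is finite. I would then note that all quantities entering $\check c_i(\theta)$ — the initial states $x_j(0)$, the nonlinearities $f_j$, and the derivatives $\Gamma^{(j)}(0)$ computed via \eqref{eq:nctG} — are determined by the initial data and the (fixed, though unknown) parameters $\theta$, so the threshold $\check c_i(\theta)$ is a well-defined constant and the strict inequality \eqref{eq:ci} can indeed be imposed. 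Assembling the base case with the induction step yields $h_i(0)>0$ for all $i=1,\dots,m$.
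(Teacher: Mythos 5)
Your proposal is correct and follows essentially the same induction as the paper's proof: the base case $h_1(0)=x_1(0)-\Gamma(0)>0$ is exactly Assumption \ref{as:initialx1} (the paper phrases it as $h_1(0)=\beta(1,0)>0$, which is the same quantity), and the induction step is the paper's computation \eqref{eq:hi10} combined with \eqref{eq:ci}, which you have merely repackaged into the tidier identity $h_{i+1}(0)=(c_i-\check c_i(\theta))h_i(0)$. Your remark that the denominator of $\check c_i(\theta)$ equals $h_i(0)>0$ by the inductive hypothesis matches the paper's own justification that the denominators in \eqref{eq:underc} are nonzero.
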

\begin{proof}
We prove $h_{i}(0)>0$, $i=1,\cdots,m$ by the induction method.
Recalling \eqref{eq:o5}, \eqref{eq:contran1b},  Assumption \ref{as:initialx1}, and \eqref{eq:target5}, we know the base case
\begin{align}
h_1(0)=\beta (1,0)>0.\label{eq:h1less0}
\end{align}
We then show the induction step: if $h_i(0)>0$,  then $h_{i+1}(0)>0$ under the choices of  $c_i$ in \eqref{eq:ci}. According to \eqref{eq:ncti}, \eqref{eq:taui}, we know
\begin{align}\label{eq:hi10}
&h_{i+1}(0)=x_{i+1}(0)-\tau_{i}(0)-\Gamma^{(i)}(0)\notag\\
&=x_{i+1}(0)+c_ih_i(0)+f_i(\underline x_i(0))\notag\\&-\sum_{j=1}^{i-1}\bigg[\frac{\partial\tau_{i-1}(\underline x_{i-1}(0),\underline \Gamma^{(i-2)}(0))}{\partial{x_j}(0)}(x_{j+1}(0)+f_j(\underline x_j(0))\notag\\&+\frac{\partial\tau_{i-1}(\underline x_{i-1}(0),\underline \Gamma^{(i-2)}(0))}{\partial{\Gamma^{(j-1)}(0)}}\Gamma^{(j)}(0)\bigg]-\Gamma^{(i)}(0)
\end{align}
for $i=1,\cdots, m-1$. Inserting \eqref{eq:ci} with \eqref{eq:underc} where $x_i(0)-\tau_{i-1}(0)-\Gamma^{(i-1)}(0)=h_i(0)>0$ which is obtained from the inductive hypothesis $h_i(0)>0$, we have from \eqref{eq:hi10} that $h_{i+1}(0)>0$. Therefore, recalling the base case $h_1(0)>0$ in \eqref{eq:h1less0}, together with the induction step proved above, we immediately conclude that $h_{i+1}(0)>0$ for $i=1,\cdots,m-1$ under the choices of  $c_i$ in \eqref{eq:ci} where the denominators are nonzero (positive).
\end{proof}
The above three lemmas will be used to prove the safety property stated in the following theorem.
\begin{thme}\label{th:th1}
For initial data $w[0]\in C^{m-1}([0,1])$, $z[0]\in C^{m-1}([0,1])$, $X(0)\in \mathbb R^m$, $Y(0)\in \mathbb R^n$ satisfying Assumptions \ref{as:f}, \ref{as:initial}, \ref{as:initialx1}, for design parameters $c_i$, $i=1,\ldots,m$ satisfying \eqref{eq:ci} and $\kappa_i$, $i=1,\ldots,n-1$ satisfying \eqref{eq:kappai}, the closed-loop system including the plant \eqref{eq:o1}--\eqref{eq:o7} with the nominal safe (output-positive) controller \eqref{eq:U1} has the following properties:

1) Safety (output positivity) is ensured in the sense that
$y_1(t)\ge 0,~\forall t\ge 0$.

2) Exponential regulation is achieved in the sense that $\|w(\cdot,t)\|^2+\|z(\cdot,t)\|^2+|X(t)|^2+|Y(t)|^2$ is exponentially convergent to zero.

3) The control input is exponentially convergent to zero, i.e., $\lim_{t\to\infty}U(t)=0$.
\end{thme}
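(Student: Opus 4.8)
The plan is to carry out the entire analysis in the target coordinates $(\alpha,\beta,h,Z)$ of the exponentially stable cascade \eqref{eq:target1}--\eqref{eq:targetym}, establishing safety, stability, and input decay there, and then to pull the conclusions back through the boundedly invertible backstepping transformations \eqref{eq:zi}--\eqref{eq:gi}, \eqref{eq:contran1a}, \eqref{eq:contran1b}, \eqref{eq:ncti}--\eqref{eq:nctG}. The guiding observation is that \eqref{eq:target1}--\eqref{eq:targetym} is a one-way cascade $h_m\to\cdots\to h_1=\beta(1,\cdot)\to\beta\to(Z,\alpha)$, so positivity and decay can be propagated stage by stage.

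For Part 1 (safety), Lemma \ref{lem:Y} already yields $y_1(t)\ge 0$ on $[0,\frac{1}{q_2})$ and $y_1(\frac{1}{q_2})>0$, so only $t\ge\frac{1}{q_2}$ remains. I would first note that the bottom equation \eqref{eq:targetym} gives $h_m(t)=h_m(0)e^{-c_m t}>0$ by Lemma \ref{cl:hi0}, and inducting upward through \eqref{eq:targetym-1} with the variation-of-constants formula (non-negative forcing $h_{i+1}$) shows $h_i(t)>0$ for all $i$ and all $t\ge 0$, hence $\beta(1,t)=h_1(t)>0$. Solving the transport PDE \eqref{eq:target3} along characteristics gives $\beta(x,t)=h_1\big(t-\frac{1-x}{q_2}\big)>0$ for every $t\ge\frac{1}{q_2}$, $x\in[0,1]$, so $\beta(0,t)>0$ on $[\frac{1}{q_2},\infty)$. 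Finally, $A_{\rm z}$ in \eqref{eq:Az} is Metzler (its only off-diagonal entries are the non-negative superdiagonal ones) and $B\ge 0$, so \eqref{eq:target1} is a positive system; with the strictly positive initial vector $Z(\frac{1}{q_2})>0$ from Lemma \ref{cl:zi0} and the non-negative input $\beta(0,\cdot)$, the componentwise formula $Z(t)=e^{A_{\rm z}(t-1/q_2)}Z(\frac{1}{q_2})+\int_{1/q_2}^t e^{A_{\rm z}(t-s)}B\beta(0,s)ds$ (with $e^{A_{\rm z}\cdot}\ge 0$ and $(e^{A_{\rm z}\cdot})_{11}=e^{-\kappa_1\cdot}>0$) yields $z_1(t)>0$, i.e. $y_1(t)=z_1(t)>0$ for $t\ge\frac{1}{q_2}$. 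Combined with Lemma \ref{lem:Y} this gives $y_1(t)\ge 0$ for all $t\ge 0$.

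For Part 2 (exponential regulation), I would use the Lyapunov functional $V=Z^TPZ+\frac{a}{2}\int_0^1 e^{\delta x}\beta(x,t)^2dx+\frac{c}{2}\int_0^1 e^{-\delta x}\alpha(x,t)^2dx+N\,h^TSh$, where $P>0$ solves $A_{\rm z}^TP+PA_{\rm z}=-Q$ (possible since $A_{\rm z}$ is Hurwitz), $S>0$ is a Lyapunov matrix for the Hurwitz $h$-cascade so that its derivative along \eqref{eq:targetym-1}--\eqref{eq:targetym} is $-Nh^TRh$ with $R>0$, and $\delta,a,c,N>0$ are to be fixed. Differentiating along \eqref{eq:target1}--\eqref{eq:targetym} and integrating by parts produces, besides the negative interior terms $-Z^TQZ$, $-\frac{aq_2\delta}{2}\int e^{\delta x}\beta^2$, $-\frac{cq_1\delta}{2}\int e^{-\delta x}\alpha^2$, $-Nh^TRh$, the cross and boundary terms $2Z^TPB\beta(0,t)$, $\frac{aq_2}{2}e^{\delta}h_1^2-\frac{aq_2}{2}\beta(0,t)^2$ (from \eqref{eq:target3}, \eqref{eq:target5}), and $\frac{cq_1}{2}p^2\beta(0,t)^2$ (from \eqref{eq:target2}, \eqref{eq:target4}). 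I would fix the constants in a one-directional dominance order: take $c$ arbitrary; take $a$ large enough that $-\frac{aq_2}{2}\beta(0,t)^2$ absorbs $\frac{cq_1p^2}{2}\beta(0,t)^2$ and, after $2Z^TPB\beta(0,t)\le\tfrac12\lambda_{\min}(Q)|Z|^2+C\beta(0,t)^2$, the remaining cross term; then take $N$ large so $-Nh^TRh$ dominates $\frac{aq_2}{2}e^{\delta}h_1^2$. This gives $\dot V\le-\omega V$, hence exponential decay of $|Z|^2+\|\beta\|^2+\|\alpha\|^2+|h|^2$, which transfers to $\|w\|^2+\|z\|^2+|X|^2+|Y|^2$ by the bounded invertibility of the transformations, the linear ones \eqref{eq:zi}--\eqref{eq:gi} and the Volterra pair \eqref{eq:contran1a}--\eqref{eq:contran1b} being immediate, and the ODE transformation inverted recursively via $x_i=h_i+\tau_{i-1}+\Gamma^{(i-1)}$ using $f_j(\mathbf{0})=0$ and the differentiability of Assumption \ref{as:f}.

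For Part 3 (input convergence), I would read off $U(t)=\tau_m-\sum_{i=0}^{m-1}\bar q_i z_t^{(i)}(1,t)-M^TY(t)+\Gamma^{(m)}(t)$ from \eqref{eq:U1} and argue that each term vanishes: $M^TY(t)\to 0$ and $\tau_m\to 0$ (a smooth function of the decaying signals $\underline x_m,\underline\Gamma^{(m-1)}$ that vanishes at the origin), while $\Gamma^{(m)}(t)$ and the traces $z_t^{(i)}(1,t)$ appearing in \eqref{eq:nctG} decay once the $L^2$ regulation of Part 2 is upgraded to higher-order Sobolev norms. I expect the main obstacle to lie precisely here and in the corresponding norm-equivalence step of Part 2: because $\Gamma^{(i)}$ in \eqref{eq:nctG} contains boundary traces $z_t^{(j)}(1,t)$ and $w_t^{(j)}(0,t)$ up to order $m-1$, bounding and decaying them requires propagating the $C^{m-1}$ regularity of the initial data through the target dynamics rather than merely controlling $L^2$ state norms. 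This higher-order trace estimate is the technical crux, whereas the Metzler-positivity argument of Part 1 and the Lyapunov construction of Part 2 are comparatively routine.
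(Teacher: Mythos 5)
Your proposal follows essentially the same route as the paper's proof. Part 1 propagates positivity through the cascade $h_m\to\cdots\to h_1=\beta(1,\cdot)\to\beta(0,\cdot)\to Z$ exactly as the paper does (the paper leaves the Metzler/positive-system character of $A_{\rm z}$ implicit where you state it explicitly, which is a harmless clarification), and your Lyapunov functional differs from \eqref{eq:V} only in using a quadratic form $N\,h^TSh$ where the paper uses the diagonal $\frac{r}{2}\sum_i h_i(t)^2$ combined with the lower bounds \eqref{eq:ci} on the $c_i$; the one-directional dominance ordering of the analysis constants is the same. The only step you leave open --- exponential decay of the boundary traces $z_t^{(i)}(1,t)$, $w_t^{(i)}(0,t)$, etc., needed both to recover $|X(t)|\to 0$ through $x_i=h_i+\tau_{i-1}+\Gamma^{(i-1)}$ and to conclude $U(t)\to 0$ --- is closed in the paper by an argument more elementary than the Sobolev-norm propagation you anticipate: since $\beta(1,t)=h_1(t)$ and the $h$-subsystem \eqref{eq:targetym-1}, \eqref{eq:targetym} is a finite-dimensional exponentially stable linear ODE, each trace $\beta_t^{(j)}(1,t)$ for $j\le m-1$ is a linear combination of $h_1,\dots,h_{j+1}$ and hence decays exponentially; the method of characteristics for \eqref{eq:target2}, \eqref{eq:target3} transports these traces (with a delay) to $x=0$ and to the corresponding $\alpha$-traces; and differentiating the inverse Volterra transformations \eqref{eq:Icontran1a}, \eqref{eq:Icontran1b} at $x=0$ and $x=1$ then yields the decay of the required $w$- and $z$-traces, from which $|X(t)|\to 0$ and $U(t)\to 0$ follow. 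With that observation inserted, your argument is complete and coincides with the paper's.
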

\begin{proof}
1)
We prove the safety in property 1 by showing that the non-negativity of the CBFs is guaranteed: $h_i(t)\ge 0$, $i=1,\cdots,m$, on $t\in[0,\infty)$, $\beta(x,t)\ge 0$ on $x\in[0,1]\times t\in[\frac{1}{q_2},\infty)$, and $z_i(t)\ge 0$, $i=1,\cdots,n$, on $t\in[\frac{1}{q_2},\infty)$.

According to the structure \eqref{eq:targetym-1}, \eqref{eq:targetym} and Lemma \ref{cl:hi0}, we obtain the positivity of the ODE CBF, i.e., $h_{i}(t)\ge 0$, $i=1,\cdots,m$, for all $t\ge 0$.

The solution of the transport PDE \eqref{eq:target3}, \eqref{eq:target5}  is
\begin{align}\label{eq:beta}
\beta(x,t)=\begin{cases} \beta(x+q_2t,0)~~~~~0\le x+q_2t< 1,\\ h_1(t-(1-x)/q_2)~~~ x+q_2t\ge 1,\end{cases}
\end{align}
for all $t\in[0,\infty),x\in[0,1]$.
Recalling $h_{1}(t)\ge 0$ for all $t\ge 0$ obtained above, it follows from \eqref{eq:beta} that the non-negativity of the PDE CBF, i.e., $\beta(x,t)\ge 0$ for all $t\ge \frac{1}{q_2}$, $x\in[0,1]$. It implies
\begin{align}\label{eq:beta0}
\beta(0,t)\ge 0,~~~\forall t\ge \frac{1}{q_2}.
\end{align}
It follows from Lemma \ref{cl:zi0}, \eqref{eq:target1}, \eqref{eq:Az},  and \eqref{eq:beta0} that $z_i(t)\ge 0$, $i=1,\cdots,n$, on $t\in[\frac{1}{q_2},\infty)$. As a result, $y_1(t)=z_1(t)\ge 0$ for $t\in[\frac{1}{q_2},\infty)$. Recalling Lemma \ref{lem:Y},
the property 1 is obtained.

{2) We show the exponential regulation in property 2 is achieved by Lyapunov analysis. Consider the Lyapunov function,
\begin{align}\label{eq:V}
&V(t)=Z(t)^TPZ(t)+\frac{r}{2}\sum_{i=1}^{m}h_i(t)^2\notag\\&+\frac{1}{2}\int_0^1 {{}{e^{ - x}}{\alpha}{{(x,t)}^2}} dx+\frac{1}{2}\int_0^1 {{a_{0}}{e^{ x}}{\beta}{{(x,t)}^2}} dx
\end{align}
where the positive definite matrix $P= {P}^T$ is the solution to the
Lyapunov equation $A_{\rm Z}^TP+PA_{\rm Z}=-Q$ for some $Q={Q}^T>0$, and where the positive constants $r,a_0$ are to be determined later.
Defining
\begin{align}
\Xi(t)=\|\beta(\cdot,t)\|^2+\|\alpha(\cdot,t)\|^2+\sum_{i=1}^{m}h_i(t)^2+|Z(t)|^2\label{eq:Xi}
\end{align}
we have
\begin{align}
\xi_1\Xi(t)\le V(t)\le \xi_2\Xi(t)\label{eq:Xibound}
\end{align}
for some positive $\xi_1$ and $\xi_2$.
Taking the derivative of \eqref{eq:V} along \eqref{eq:target1}--\eqref{eq:targetym}, applying Young's inequality, we get
 \begin{align}
&\dot V(t)\le - \frac{1}{2}\lambda_{\rm min}(Q){ {
 Z(t)} ^2} \notag\\
& - \bigg(\frac{{q_2} {a_0}}{2} -\frac{{q_1q^2} }{2}- \frac{{2|PB|^2}}{\lambda_{\rm min}(Q)}\bigg) \beta {(0,t)^2}\notag\\
& -rc_1 h_1(t)^2+rh_1h_2-rc_2 h_2(t)^2+rh_2h_3(t)+\cdots\notag\\&-rc_{m-1}h_{m-1}(t)^2+rh_{m-1}(t)h_{m}(t)-rc_{m}h_{m}(t)^2\notag\\
&-\frac{1}{2}q_1{}{e^{ - 1}}{\alpha}{{(1,t)}}^2-\frac{1}{2}q_1\int_0^1 {{}{e^{ - x}}{\alpha}{{(x,t)}}^2} dx\notag\\&+\frac{{a_{0}q_2}{e}}{2}h_1(t)^2-\frac{{a_{0}}q_2}{2}\int_0^1 {e^{ x}}{\beta}{{(x,t)}}^2 dx.
\label{eq:dLy}
 \end{align}
Choosing the analysis parameters ${a_0}$ as
\begin{align}
{a_0}\ge{\frac{q_1q^2}{q_2}}+\frac{{4|PB|^2}}{{q_2}\lambda_{\rm min}(Q)}\label{eq:ra}
\end{align}
applying Young's inequality, one obtains
 \begin{align}
&\dot V(t) \le-\left(r\left(c_1-\frac{1}{2}\right)-\frac{1}{2}{q_2}{a_0}{e} \right)h_1(t)^2\notag\\&-\sum_{i=2}^{m-1}r(c_i-1)h_i(t)^2-r\left(c_m-\frac{1}{2}\right)h_m(t)^2- \frac{\lambda_{\rm min}(Q)}{2}{ {
 Z(t)} ^2}\notag\\
&-\frac{1}{2}q_1{}{e^{ - 1}}{\alpha}{{(1,t)}}^2-\frac{1}{2}q_1\int_0^1 {{}{e^{ - x}}{\alpha}{{(x,t)}}^2} dx\notag\\&-\frac{{a_{0}}q_2}{2}\int_0^1 {e^{ x}}{\beta}{{(x,t)}}^2 dx
 \end{align}
for $t\ge 0$.
Recalling the conditions of the design parameters $c_i$ in \eqref{eq:ci}, and choosing the analysis parameter $r$ as
 \begin{align}
 r&>\frac{1}{3}{q_2}{a_0}{e}+1,\label{eq:r}
 \end{align}
we obtain
\begin{align}
\dot V\le& -\sum_{i=1}^{m}h_i(t)^2- \frac{1}{2}\lambda_{\rm min}(Q){ {
 |Z(t)|} ^2}\notag\\&-\frac{{e^{ - 1}q_1}}{2}\int_0^1 {{\alpha}{{(x,t)}}^2} dx-\frac{1}{2}q_2{{a_{0}}}\int_0^1 {{\beta}{{(x,t)}}^2} dx\notag\\
\le& -\sigma_0 V(t)
\end{align}
where
\begin{align}
\sigma_0=\frac{1}{\xi_2}\min\left\{1,\frac{1}{2}\lambda_{\rm min}(Q),\frac{1}{2}{q_1}e^{ - 1}, \frac{1}{2}q_2{{a_{0}}} \right\}.\label{eq:sigma0}
\end{align}
It follows that
\begin{align}
\Xi(t)\le \frac{\xi_2}{\xi_1}\Xi(0)e^{-\sigma_0 t}\label{eq:targetstability}
\end{align}recalling \eqref{eq:Xibound}.
Next, we show the exponential regulation of the original states from the target system's exponential stability obtained \eqref{eq:targetstability}.
The  inverse of \eqref{eq:contran1a}, \eqref{eq:contran1b} is obtained as
\begin{align}
z (x,t) =&\alpha(x,t) - \int_0^x {\bar\phi}(x,y)\alpha(y,t)dy\notag\\
& -  \int_0^x {\bar\varphi}(x,y)\beta(y,t)dy -  \bar\gamma (x){Y}(t),\label{eq:Icontran1a}\\
w (x,t) =& \beta(x,t) - \int_0^x {\bar\Psi}(x,y)\alpha(y,t)dy \notag\\
&-  \int_0^x {\bar\Phi}(x,y)\beta(y,t)dy -  \bar\lambda (x){Y}(t)\label{eq:Icontran1b}
\end{align}
where the kernels ${\bar\phi},\bar\varphi,\bar\gamma,\bar\Psi,\bar\Phi,\bar\lambda$ are given in Appendix\ref{sec:inker}.
The inverse of the transformation \eqref{eq:zi}--\eqref{eq:gi} is
\begin{align}
Y(t)=T_{\rm Z}^{-1}Z(t)\label{eq:ZTY}
\end{align}
where $T_{\rm Z}$ is a lower-triangular constant matrix.

According to the exponential convergence to zero of $|Z(t)|$, $\|\beta(\cdot,t)\|^2$, $\|\alpha(\cdot,t)\|^2$,  we obtain the exponential convergence to zero of $|Y(t)|$ recalling \eqref{eq:ZTY}, and the exponential convergence to zero of $\|w(\cdot,t)\|^2+\|z(\cdot,t)\|^2$ by \eqref{eq:Icontran1a}, \eqref{eq:Icontran1b}.
According to \eqref{eq:target5}--\eqref{eq:targetym}, together with the exponential convergence to zero of $\sum_{i=1}^{m}h_i(t)^2$, we have that $\sum_{i=0}^{m-1} \beta_t^{(i)}(1,t)^2$ are exponentially convergent to zero. Applying the method of characteristics, recalling \eqref{eq:target2}--\eqref{eq:target5}, we have that  $\sum_{i=0}^{m-1} \beta_t^{(i)}(0,t)^2$, $\sum_{i=0}^{m-1} \alpha_t^{(i)}(0,t)^2$, and $\sum_{i=0}^{m-1} \alpha_t^{(i)}(1,t)^2$ are exponentially convergent to zero.
Then taking the time derivatives of \eqref{eq:Icontran1a}, \eqref{eq:Icontran1b} at $x=0$ and $x=1$ along the target system \eqref{eq:target1}--\eqref{eq:targetym}, implementing Cauchy-Schwarz inequality, recalling the exponential convergence results obtained above, we then have the exponential convergence to zero of the signals $\sum_{i=0}^{m-1} w_t^{(i)}(0,t)^2$, $\sum_{i=0}^{m-1} w_t^{(i)}(1,t)^2$, $\sum_{i=0}^{m-1} z_t^{(i)}(0,t)^2$, $\sum_{i=0}^{m-1} z_t^{(i)}(1,t)^2$.
Recalling Assumption \ref{as:f} and \eqref{eq:ncti}--\eqref{eq:nctG} (step by step from $i=1$ to $i=m$), we obtain that $|X(t)|$ is exponentially convergent to zero as well. We thus obtain property 2.

3) The exponential convergence of the control input \eqref{eq:U1} to zero in property 3 is straightforward to obtain, because the exponential convergence of all signals in the controller \eqref{eq:U1} have been guaranteed by property 2 (the exponential convergence to zero of $\sum_{i=0}^{m-1} w_t^{(i)}(0,t)^2$, $\sum_{i=0}^{m-1} z_t^{(i)}(0,t)^2$, $\sum_{i=0}^{m-1} w_t^{(i)}(1,t)^2$, $\sum_{i=0}^{m-1} z_t^{(i)}(1,t)^2$ are obtained at the end of the proof of property 2. The property 3 is thus obtained.}
\end{proof}
The nominal control design in this section is the first safe control design for hyperbolic PDEs, where both safety and exponential stability are guaranteed. Even though the safety goal (positivity) is not imposed on the original PDE states $z(x,t),w(x,t)$, the PDE states need to be incorporated in the safe control design to guarantee the safety (positivity) of the distal ODE, where the PDE CBF $\beta(x,t)$ is introduced and whose non-negativity on the entire space domain and the time domain (from the time the control action reaches the distal ODE to infinity) is to be ensured.

Next, considering uncertainties that often exist in practical applications, we propose a new aCBF method and present a safe adaptive controller $U_a(t)$ that guarantees both safety and exponential regulation in the presence of the uncertainties that exist in both PDEs and the ODE at the uncontrolled PDE boundary.
\section{Safe adaptive control design}\label{sec:adaptive}
\subsection{Adaptive controller}
\subsubsection{Certainty equivalence control law}
First, we build a certainty equivalence adaptive controller, which is potentially unsafe, by replacing the unknown parameters $\theta$ in the nominal control input $U$ with the parameter estimate $\hat\theta$, i.e.,
\begin{align}
U_d(t):=  \mathcal U(\chi(t);\hat\theta(t_i)),~~~t\in[t_i,t_{i+1})\label{eq:Ud}
\end{align}
where $\hat\theta(t_i)=[\hat d_1,\hat d_2,\hat b]^T$, an estimate generated with a triggered batch least-squares identifier that is updated along a sequence of time instants $t_i$ and uses the plant states in a time interval before $t_i$ to produce the parameter estimates, will be defined in Sec. \ref{sec:ls}, and where the sequence of triggering time instants $\{t_i\ge0\}^{\infty}_{i=0}$ is defined as
\begin{align}
t_{i+1}=t_i+T\label{eq:ri}
\end{align}
where $T$, a positive design parameter, is free.
\begin{remark}
{\rm We use the simple triggering mechanism \eqref{eq:ri} here for easier implementation and reducing reading burden.  A more complicated event-triggering mechanism in \cite{J2022Event} to update batch least-squares identifier for hyperbolic PDEs by evaluating the net increase of the system norms can also be used here, which can generate more effective triggering times but requires heavy computation to estimate the increase of the system norms by Lyapunov analysis.}
\end{remark}
\subsubsection{Batch least-squares identifier}\label{sec:ls}
According to   \eqref{eq:o2}, \eqref{eq:o3}, we get for $\tau>0$ and $\bar n =1,2,\cdots$ that
\begin{align}
&\frac{d}{d\tau}\left(\int_0^{1}\sin({x\pi \bar n})z(x,\tau)dx+\int_0^{1}\sin({x\pi \bar n})w(x,\tau)dx\right)\notag\\
&=-{q_2\pi \bar n}\int_0^{1}\cos({x\pi \bar n})w(x,\tau)dx+d_1\int_0^{1}\sin({x\pi \bar n})w(x,\tau)dx\notag\\
&+{q_1\pi \bar n}\int_0^{1}\cos({x\pi \bar n})z(x,\tau)dx\notag\\
&+d_2\int_0^{1}\sin({x\pi \bar n})z(x,\tau)dx,\label{eq:Ls1}\\
&\frac{d}{d\tau} y_n(\tau)=\sum_{i=1}^n l_iy_i(\tau)+bw(0,\tau).\label{eq:Ls2}
\end{align}
Define the instant $\mu_{i+1}$ as
\begin{align}
\mu_{i+1}=\min\{t_g:g\in\{0,\ldots,i\},t_g\ge t_{i+1}-\tilde N T\},\label{eq:mui}
\end{align}
for $i\in\mathbb Z^+$, where the positive integer $\tilde N\ge 1$ is a free design parameter.
Integrating \eqref{eq:Ls1}, \eqref{eq:Ls2} from $\mu_{i+1}$ to $t$,  yields
\begin{align}
&p_{\bar n}(t,\mu_{i+1})=d_1g_{\bar n,{1}}(t,\mu_{i+1})+d_2g_{\bar n,{2}}(t,\mu_{i+1}),\label{eq:f}\\
&p_{\rm b}(t,\mu_{i+1})=bq_{\rm b}(t,\mu_{i+1})\label{eq:p}
\end{align}
where $p_{\bar n}, g_{\bar n,{1}}, g_{\bar n,{2}}, p_{\rm b}, q_{\rm b}$ are
\begin{align}
&p_{\bar n}(t,\mu_{i+1})=\int_0^{1}\sin({x\pi \bar n})z(x,t)dx+\int_0^{1}\sin({x\pi \bar n})w(x,t)dx\notag\\
&-\int_0^{1}\sin({x\pi \bar n})z(x,\mu_{i+1})dx-\int_0^{1}\sin({x\pi \bar n})w(x,\mu_{i+1})dx\notag\\
&-{\pi \bar n}\int_{\mu_{i+1}}^t\int_0^{1}\cos({x\pi \bar n})(q_1z(x,\tau)-q_2w(x,\tau))dxd\tau\label{eq:fn}\\
&g_ {\bar n,1}(t,\mu_{i+1})=\int_{\mu_{i+1}}^t\int_0^{1}\sin({x\pi \bar n})w(x,\tau)dxd\tau\label{eq:gq1}\\
&g_ {\bar n,2}(t,\mu_{i+1})=\int_{\mu_{i+1}}^t\int_0^{1}\sin({x\pi \bar n})z(x,\tau)dxd\tau\label{eq:gq2}
\end{align}
for $\bar n=1,2,\cdots$, and
\begin{align}
p_{\rm b}(t,\mu_{i+1})=&y_n (t)- y_n (\mu_{i+1})-\int_{\mu_{i+1}}^{t}\sum_{i=1}^n l_iy_i(\tau) d\tau,\label{eq:f1}\\
q_ {\rm b}(t,\mu_{i+1})=&\int_{\mu_{i+1}}^{t}w(0,\tau)d\tau.\label{eq:ga}
\end{align}
Define the function $h_{i,\bar n}:\mathbb R^{3}\to \mathbb R_+$ by the formula
\begin{align}
&h_{i,\bar n}(\ell)=\int_{\mu_{i+1}}^{t_{i+1}}[(p_{\bar n}(t,\mu_{i+1})-\ell_1g_{\bar n,{1}}(t,\mu_{i+1})\notag\\&-\ell_2 g_{\bar n,{2}}(t,\mu_{i+1}))^2+(p_{\rm b}(t,\mu_{i+1})-\ell_3q_{\rm b}(t,\mu_{i+1}))^2]dt,\label{eq:hi}
\end{align}
for $i\in  \mathbb Z^+$, where $\ell=[\ell_1,\ell_2,\ell_3]^T$. According to \eqref{eq:f}, \eqref{eq:p}, the function $h_{i,\bar n}(\ell)$ in \eqref{eq:hi} has a global minimum $h_{i,\bar n}(\theta)=0$. We get from Fermat's theorem (vanishing gradient at extrema) that the following matrix equation holds for every $i\in  \mathbb Z_+$ and $\bar n\in \mathbb N$:
\begin{align}
Z_{\bar n}(\mu_{i+1},t_{i+1})=G_{\bar n}(\mu_{i+1},t_{i+1})\theta\label{eq:Fer}
\end{align}
where $\theta$ defined in \eqref{eq:theta} is a column vector of unknown parameters, and where
\begin{align}
Z_{\bar n}=[H_{{\bar n},1},H_{{\bar n},2},H_{3}]^T,~~G_{\bar n}=\left[
    \begin{array}{ccc}
      Q_{{\bar n},1} & Q_{{\bar n},2} & 0\\
     Q_{{\bar n},2} & Q_{{\bar n},3} & 0\\
     0 & 0 & Q_{4}\\
    \end{array}\label{eq:G}
  \right],
\end{align}
with $H_{{\bar n},1}, H_{{\bar n},2}, H_{3}, Q_{{\bar n},1} , Q_{{\bar n},2}, Q_{{\bar n},3},  Q_{4}$ defined as
\begin{align}
H_{\bar n,2}(\mu_{i+1},t_{i+1})&=\int_{\mu_{i+1}}^{t_{i+1}}g_{\bar n,{2}}(t,\mu_{i+1})p_{\bar n}(t,\mu_{i+1}) dt,\label{eq:H1m}\\
H_{\bar n,1}(\mu_{i+1},t_{i+1})&=\int_{\mu_{i+1}}^{t_{i+1}}g_{\bar n,{1}}(t,\mu_{i+1})p_{\bar n}(t,\mu_{i+1}) dt,\label{eq:H2m}\\
Q_{\bar n,3}(\mu_{i+1},t_{i+1})&=\int_{\mu_{i+1}}^{t_{i+1}}g_{\bar n,{2}}(t,\mu_{i+1})^2 dt,\label{eq:Q1m}\\
Q_{\bar n,2}(\mu_{i+1},t_{i+1})&=\int_{\mu_{i+1}}^{t_{i+1}}g_{\bar n,{2}}(t,\mu_{i+1})g_{\bar n,{1}}(t,\mu_{i+1}) dt,\label{eq:Q2m}\\
Q_{\bar n,1}(\mu_{i+1},t_{i+1})&=\int_{\mu_{i+1}}^{t_{i+1}}g_{\bar n,{1}}(t,\mu_{i+1})^2 dt,\label{eq:Q3m}\\
H_{3}(\mu_{i+1},t_{i+1})&=\int_{\mu_{i+1}}^{t_{i+1}}q_{\rm b}(t,\mu_{i+1})p_{\rm b}(t,\mu_{i+1}) dt,\label{eq:H3m}\\
Q_{4}(\mu_{i+1},t_{i+1})&=\int_{\mu_{i+1}}^{t_{i+1}}q_{\rm b}(t,\mu_{i+1})^2 dt.\label{eq:Q4}
\end{align}
Indeed, \eqref{eq:Fer}, \eqref{eq:G} are obtained by differentiating the functions $h_{i,{\bar n}}(\ell)$ defined by \eqref{eq:hi} with respect to $\ell_1$, $\ell_2$, $\ell_3$, respectively, and using the fact that the derivatives at the position of the global minimum $(\ell_1,\ell_2,\ell_3)=(d_1,d_2,b)$ are zero.

The parameter estimator (update law) is defined as
\begin{align}
&\hat\theta(t_{i+1})={\rm argmin}\bigg\{|\ell-\hat\theta(t_i)|^2: {\ell\in \Theta}, \notag\\
&Z_{\bar n}(\mu_{i+1},t_{i+1})=G_{\bar n}(\mu_{i+1},t_{i+1})\ell,~~ \bar n=1,2,\cdots\bigg\},\label{eq:adaptivelaw}
\end{align}
where
$${\Theta=\{\ell\in \mathbb R^3:\underline d_1\le \ell_1\le \overline d_1 ,\underline d_2\le \ell_2\le \overline d_2, 0<\underline b\le b\le \overline b\}}.$$
Please note that even though all states are measurable in the full-state feedback case, for the parameter estimation, we can not use the naive method--that is, taking the time and spatial derivatives of the measurements $z(x,t),w(x,t)$ and taking the time derivative of the measurement $Y(t)$, to calculate $d_1,d_2,b$ straightforwardly by \eqref{eq:o1}, \eqref{eq:o2}, \eqref{eq:o3}, because of the following two reasons: 1) taking the time derivative of the measured signals always leads to the undesired noise amplification in practice; 2) the possible zero values of $w(0,t)$, $z(x,t)$, $w(x,t)$ accompanied with the unknown parameters will cause singularity.
\subsection{Safe adaptive controller}
The adaptive controller \eqref{eq:Ud} is potentially unsafe because the mismatch between the parameter estimates and the true values leads to the safety obtained in Theorem \ref{th:th1} is not ensured anymore. Next, we design an output-positive adaptive controller by making use of a QP safety filter to override the adaptive controller $U_d$ \eqref{eq:Ud}.

Considering the plant parameters $d_1,d_2,b$ that are considered as unknown in this section, the design parameter conditions \eqref{eq:kappai}, \eqref{eq:ci} in the nominal control design are slightly modified as
\begin{align}
\kappa_i>&\max_{\underline b\le \varsigma\le \overline b}\check \kappa_i(\varsigma),~~i=1,\cdots,n-1\label{eq:kin}\\
c_i>&\max\left\{2, \max_{\vartheta\in\Theta}\check c_i(\vartheta)\right\},i=1,\cdots,m-1,~c_m>1\label{eq:cin}
\end{align}
using the known bounds of the unknown parameters in Assumption \ref{as:bound}, where $\check \kappa_i$, $\check c_i$ are defined in \eqref{eq:kappai}, \eqref{eq:underc}, respectively.

With \eqref{eq:kin}, \eqref{eq:cin}, and Assumption \ref{as:initial}, the required positive initialization in Lemmas \ref{cl:zi0}, \ref{cl:hi0} is achieved. Like the nominal control design, a sufficient condition to the safety guarantee, i.e, the non-negativity of the functions $h_i,\beta,z_i$, is
$h_m(t)\ge 0$ all the time, whose sufficient condition is
\begin{align}
\dot h_m(t;\theta)\ge -\bar c h_m(t;\theta)\label{eq:dymal}
\end{align}
under the positive initialization, where the positive constant $\bar c$ is a design parameter satisfying
$\bar c\ge c_m$.
Writing
$\theta=[d_1,d_2,b]^T$
after $``;"$ in $h_m(t;\theta)$ emphasizes the fact that now $h_m$ depends on the unknown parameters $d_1,d_2,b$.
Considering the uncertainties, recalling \eqref{eq:o7}, \eqref{eq:ncti}, \eqref{eq:taui} and the adaptive estimate \eqref{eq:adaptivelaw}, it follows the CBF constraint \eqref{eq:dymal} that a safe region of the adaptive control action is
\begin{align}
\mathcal C(t)=\left\{u\in\mathbb R:u\ge \max_{\vartheta\in D_i} U^{*}(\chi;\vartheta)\right\},~t\in[t_i, t_{i+1})\label{eq:Ustartb}
\end{align}
where the explicit function $U^{*}$ is
\begin{align}
U^{*}(\chi;\theta)=&(c_m-\bar c)h_m+\tau_m-{\sum_{i=0}^{m-1} \bar q_iz_t^{(i)}(1,t)-M^TY(t)}\notag\\&+\Gamma^{(m)}(t), \label{eq:Ustart}
\end{align}
which is identical to \eqref{eq:U1} when replacing $\bar c$ with $c_m$. The sets
$D_{i}$ are generated in running BaLSI \eqref{eq:adaptivelaw} and given by
\begin{align}\label{eq:Di}
D_{i}=\{\ell\in \Theta: Z_{\bar n}(\mu_{i},t_{i})=G_{\bar n}(\mu_{i},t_{i})\ell\}\cap D_{i-1}
\end{align}
for $i\in\mathbb N$ and $D_0=\Theta$.
It implies that
\begin{align}
\theta\subseteq D_i\subseteq\Theta, ~i\in\mathbb Z^+, \label{eq:setD}
\end{align}
recalling \eqref{eq:Fer}. Computing $\max_{\vartheta\in D_i} U^{*}(\chi;\vartheta)$ will not cost too much time because $U^{*}(\chi;\vartheta)$ is an explicit function of $\vartheta$, and the seeking range $D_i$ would possibly be shrunk as time goes on until it becomes a singleton that is equal to true value $\theta$, which will be shown in Lemma \ref{lem:estimate}.

Making use of a QP safety filter to guarantee the adaptive control input stays in the safe region \eqref{eq:Ustartb}, we build the output-positive adaptive controller $U_a(t)$ as
\begin{align}\label{eq:Uas}
U_a= \arg \min_{u\in \mathbb R}\{|u-U_d|^2\},~~s.t.~ u\in \mathcal C(t).
\end{align}
\begin{remark}\label{rem:control}
{\rm To avoid falling into the extreme and rare cases that hinder the finite time exact parameter identification, we add some tips, whose reasons will be seen clearly in the proof of  Lemma \ref{lem:estimate}, for implementation of the control law \eqref{eq:Uas}. 1) If $z[t]-pw[t]\equiv0$ ($w[t]$, $z[t]$ are not identically zero) on $t\in[\mu_{i+1},\frac{\mu_{i+1}+t_{i+1}}{2}]$, $i\in\mathbb Z_+$, $t_{i+1}\le t_f$, we should add a small excitation signal into the current control signal during $t\in[\frac{\mu_{i+1}+t_{i+1}}{2},t_{i+1})$, $i\in\mathbb Z_+$, $t_{i+1}\le t_f$, to avoid that $z[t]-pw[t]\equiv0$ on $t\in[\mu_{i+1},t_{i+1})$, $i\in\mathbb Z_+$, $t_{i+1}\le t_f$; 2) If $z[0]$ or $w[0]$ is identically zero, and $U_a(t)$ is identically zero on $t\in[\frac{1}{q_1},t_z)$ for a certain $t_z>\frac{1}{q_1}$,  then we should also add a small excitation signal into the current control signal for $t\in[t_z,t_{z+1})$, to make sure that $U_a(t)$ is not identically exact to zero for $t\in[\frac{1}{q_1},\infty)$. 3) If $w(0,t)\equiv0$ on $t\in[0,t_z)$ for a certain $t_z>0$, a small excitation signal should also be added into the current control signal during $t\in[t_z,t_{z+1})$ to avoid $w(0,t)\equiv 0$ for all time. The form of the excitation signal can be easily defined on the basis of $\varsigma_1(t),\varsigma_2(t)$ obtained in the proof Lemma \ref{lem:estimate}, and the values of excitation should be larger than zero in order to keep the control input staying in the safe region  \eqref{eq:Ustartb}.}
\end{remark}

Summary of the safe adaptive control design: we obtain an adaptive control law by replacing the unknown parameters in the nominal safe control law $U=\mathcal U(\chi(t);\theta)$ \eqref{eq:U1} with the parameter estimates $\hat\theta$ generated from a BaLSI \eqref{eq:adaptivelaw} that ensures the exact parameter identification in finite time which will be shown in Lemma \ref{lem:estimate}. Even though the resulting adaptive control law $U_d$ \eqref{eq:Ud} can exponentially regulate the plant with uncertainties, it is potentially unsafe because it deviates from the nominal safe control law  in the adaptive learning process due to the parameter estimation errors. Therefore, we build the aCBF constraint to override the desired but potentially unsafe adaptive control law $U_d$ via the QP program \eqref{eq:Uas}. Once the exact parameter identification is achieved, the set $D_i$ generated in BaLSI is identically a singleton that is nothing else but $\theta$, which results in the fact that the safe adaptive control law $U_a$, the adaptive control law $U_d$, and the nominal control law  $U$ become identical, guaranteeing both the exponential regulation and safety as shown in Theorem \ref{th:th1}. The diagram of the safe-adaptive closed-loop system is shown in Fig. \ref{fig:closeloop}.
\begin{figure}
\centering
\includegraphics[width=8cm]{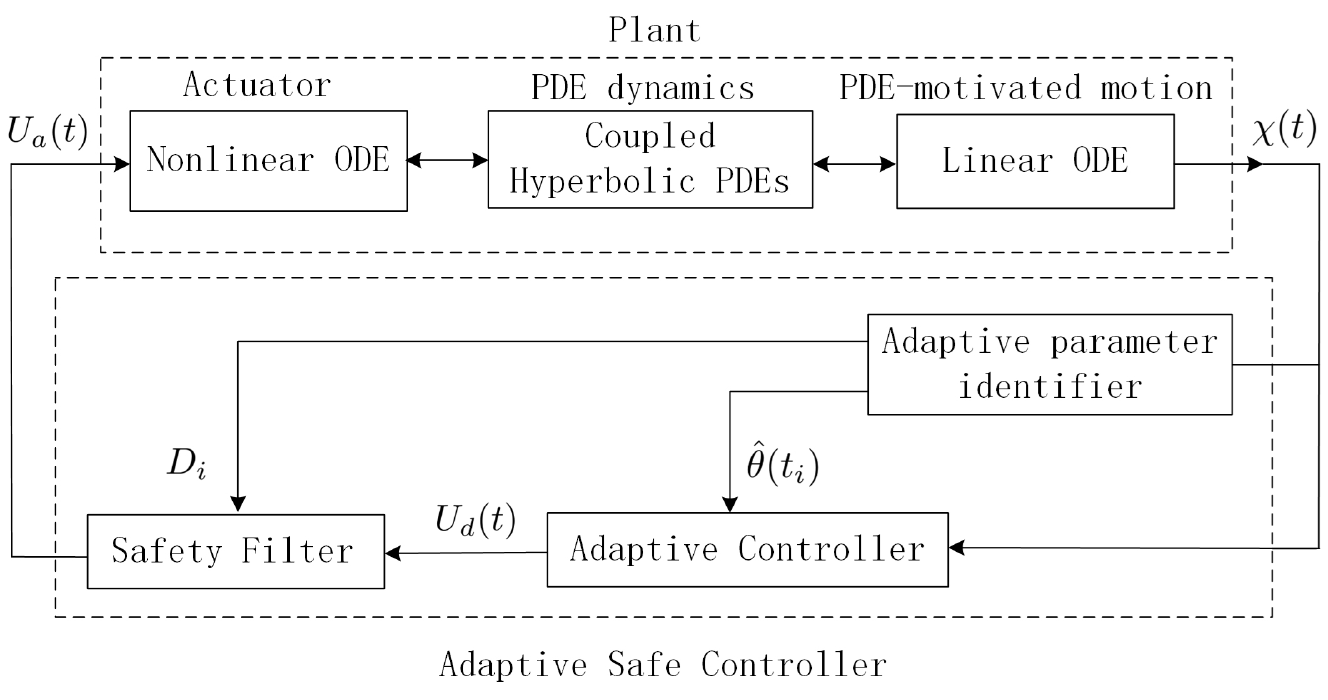}
\caption{The diagram of the proposed safe-adaptive control.}
\label{fig:closeloop}
\end{figure}
\begin{remark}
\emph{Our design encompasses all the essential features of a CBF safety design. The chain structures for the two ODEs,
 i.e., \eqref{eq:targ5}, \eqref{eq:targetym-1}, \eqref{eq:targetym} in the target system, are
essentially the high relative-degree CBFs, which were introduced in 2006 \cite{KrsticBement2006} for non-overshooting control and were independently discovered in 2016 \cite{2016Nguyen}. Moreover, as is typical with traditional CBFs for ODEs, the PDE state $\beta(x,t)$ in the target system is also required to be ensured non-negative for the purpose of safety. Additionally, the CBF-based control is often accompanied by a quadratic program (QP) safety filter, which is \eqref{eq:Uas} above.}
\end{remark}
\subsection{Result with safe adaptive control}
Defining the difference between $U(t)$ and $U_a(t)$ as $\eta(t)$, i.e,
\begin{align}
\eta(t)&=U(t)-U_a(t)\notag\\&=\mathcal U(\chi(t);\theta)-\mathcal U(\chi(t);\hat\theta(t_i)),~~t\in[t_i,t_{i+1})\label{eq:etaU}
\end{align}
inserting $U_a$ defined by \eqref{eq:Ud} into \eqref{eq:o7}, recalling the nominal control design in Sec. \ref{sec:ODEcontroldesign}, the target system becomes \eqref{eq:target1}--\eqref{eq:targetym-1}, together with
\begin{align}
\dot h_m(t)=-c_m h_m(t)-\eta(t).\label{eq:yma}
\end{align}
\begin{remark}\label{eq:reta}
\emph{The signal $\eta(t)$ results from the parameter estimation error. It only includes the signals from the $w, z$-PDEs (or $\beta, \alpha$ in terms of target states) and the distal $Y$-ODE  (or $Z$ in terms of target states), without the states in the $X$-ODE (or $h_i$, $i=1,\cdots, m$ in terms of target states), because the unknown parameter only exists in the linear subsystem of $w,z$-PDE and $Y$-ODE.}
\end{remark}
{\begin{prope}\label{Pro:1}
For every $(z[0],w[0],X(0),Y(0))\in  C^{m-1}([0,1])^2\times\mathbb R^m\times\mathbb R^n$,  there exists  a unique solution $(z,w,X,Y)\in  C^{m-1}([0,\infty)\times[0,1])^2\times\mathbb C^0 ([0,\infty);\mathbb{R}^{m})\times\mathbb C^0([0,\infty); \mathbb{R}^{n})$ to  the system \eqref{eq:o1}--\eqref{eq:o7} with the control input \eqref{eq:Uas}.
\end{prope}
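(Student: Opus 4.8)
The plan is to prove existence and uniqueness first on a single triggering interval $[t_i,t_{i+1}]$, on which the estimator output $\hat\theta(t_i)$ and the set $D_i$ are \emph{frozen} constants, and then to concatenate, using the endpoint of one interval as the initial datum of the next; since the intervals have fixed length $T$ by \eqref{eq:ri}, countably many of them exhaust $[0,\infty)$. The regularity claim is transferred across triggering times at the end.

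First I would verify that, on such an interval, the applied feedback $U_a$ in \eqref{eq:Uas} is a well-defined, \emph{non-circular}, and locally Lipschitz functional of the instantaneous state in $C^{m-1}([0,1])^2\times\mathbb R^m\times\mathbb R^n$. Three points must be checked. (i) Solving the scalar QP \eqref{eq:Uas} explicitly gives the projection of $U_d$ onto the half-line \eqref{eq:Ustartb}, i.e. $U_a=\max\{U_d,\ \max_{\vartheta\in D_i}U^{*}(\chi;\vartheta)\}$; since $U^{*}(\chi;\vartheta)$ in \eqref{eq:Ustart} is an explicit function of $(\chi,\vartheta)$ and $D_i$ is a fixed compact set, the inner maximum is locally Lipschitz in $\chi$, and the outer $\max$ of two locally Lipschitz functionals is again locally Lipschitz. (ii) The components of $\chi$ in \eqref{eq:chi} contain boundary time-derivatives $z_t^{(j)}(1,t)$, $w_t^{(j)}(0,t)$ and the quantity $\Gamma^{(m)}(t)$; using \eqref{eq:o2}, \eqref{eq:o3} and \eqref{eq:nctG} these are rewritten as bounded functionals of the \emph{spatial} boundary values and interior integrals of the $C^{m-1}$ profiles together with $X$ and $Y$, so $U_a$ depends only on the instantaneous data $(z[t],w[t],X(t),Y(t))$. (iii) Most importantly, I would check that the loop closes with no algebraic dependence of $U$ on $\dot x_m$: since $w_t^{(j)}(1,t)=x_1^{(j)}(t)$ and the strict-feedback structure \eqref{eq:o6} makes $x_1^{(j)}$ a function of $x_1,\dots,x_{j+1}$ only, the term $\Gamma^{(m)}$ (which uses $j\le m-1$) depends on $x_1,\dots,x_m$ but not on $\dot x_m$ or $U$; hence \eqref{eq:U1} and \eqref{eq:Ustart} define $U_a$ explicitly. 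Assumption \ref{as:f} supplies exactly the differentiability of the $f_j$ needed to form $\tau_m$ and the $\Gamma^{(j)}$.

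With $U_a$ identified as a locally Lipschitz feedback, I would establish well-posedness by the method of characteristics combined with the step method. It is convenient to pass first to the target variables $(Z,\alpha,\beta,h)$ through the invertible backstepping transformations \eqref{eq:zi}--\eqref{eq:gi}, \eqref{eq:contran1a}, \eqref{eq:contran1b}, \eqref{eq:ncti}, \eqref{eq:taui} (built with the true, fixed $\theta$), whose Volterra-type kernels and their inverses \eqref{eq:Icontran1a}, \eqref{eq:Icontran1b} are bounded and sufficiently smooth (kernel existence and regularity being guaranteed by the cited backstepping results), so the map and its inverse are bounded bijections between solutions of the original and target systems in the $C^{m-1}$ topology. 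In target coordinates the transport equations \eqref{eq:target2}, \eqref{eq:target3} are \emph{decoupled}, and along characteristics their solutions are given explicitly by \eqref{eq:beta} and its $\alpha$-analogue in terms of the boundary traces $h_1(\cdot)$ and $\beta(0,\cdot)$. Substituting these traces reduces \eqref{eq:target1}, \eqref{eq:targetym-1}, \eqref{eq:yma} to a coupled delay/functional-differential system for $(Z,h_1,\dots,h_m)$ driven by the Lipschitz functional $\eta=U-U_a$; on each sub-interval of length $\min\{1/q_1,1/q_2\}$ the delayed arguments are known from earlier data, so Picard--Lindel\"of yields a unique local $C^{m-1}$ solution, which is advanced step by step over $[t_i,t_{i+1}]$. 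Global-on-the-interval existence (no finite escape) follows because, by Remark \ref{eq:reta}, $\eta$ depends only on the linear subsystem states $(\alpha,\beta,Z)$ and hence grows at most linearly; Gr\"onwall on the otherwise-linear target dynamics gives an a priori bound, and the nonlinear $X$-ODE cannot escape either, since $X$ is recovered from the bounded target states $h_i$ through the explicit relations \eqref{eq:ncti}--\eqref{eq:nctG} rather than by integrating its nonlinear dynamics directly. Transferring back through the bounded inverse then gives the unique $(z,w,X,Y)$ on $[t_i,t_{i+1}]$, and concatenation over $i$ yields the global solution; $C^{m-1}$ regularity is preserved across each trigger because the jump of $\hat\theta$, hence of $U_a$, enters only the $m$-th time-derivative of $x_1$, leaving derivatives of order $\le m-1$ continuous.

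I expect the \textbf{main obstacle} to be the feedback analysis of the second paragraph: showing that $U_a$ is a genuine, non-circular, locally Lipschitz functional on the $C^{m-1}$ state space, in particular reducing the high-order boundary time-derivatives inside $\Gamma^{(m)}$ to bounded functionals of the instantaneous profiles and ruling out an algebraic loop through $U$. Once this is secured, the characteristics-plus-step-method construction and the invertibility argument are standard.
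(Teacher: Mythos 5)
Your proposal is correct and shares the paper's overall skeleton---pass to the target variables via the backstepping transformations built with the true $\theta$, exploit the decoupled transport structure of \eqref{eq:target2}, \eqref{eq:target3}, \eqref{eq:target5}, and return through the bounded inverse transformations---but it closes the existence argument by a genuinely more constructive route. The paper's proof is essentially regularity bookkeeping: it writes the variation-of-constants formulas $h_m(t)=h_m(0)e^{-c_mt}+\int_0^{t}e^{c_m(\tau-t)}\eta(\tau)\,d\tau$ and $Z(t)=Z(0)e^{A_{\rm Z}t}+\int_0^{t}e^{A_{\rm Z}(t-\tau)}B\beta(0,\tau)\,d\tau$, propagates continuity up the $h$-chain to conclude $h_1\in C^{m-1}$, and invokes Theorem 2.6 of \cite{2019Karafyllis} for the transport equations; it never resolves the QP \eqref{eq:Uas}, treats $\eta$ as a given continuous signal rather than verifying it is a non-circular functional of the state, and obtains uniqueness only implicitly from the solution formulas. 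Your argument supplies exactly the pieces the paper glosses over: the explicit solution $U_a=\max\{U_d,\ \max_{\vartheta\in D_i}U^{*}(\chi;\vartheta)\}$ of the QP and its local Lipschitz continuity on each frozen interval, the reduction of the closed loop to a functional differential equation handled by Picard--Lindel\"of plus the step method, the Gr\"onwall bound excluding finite escape (legitimately using Remark \ref{eq:reta} to confine $\eta$ to the linear subsystem), and the concatenation across the triggering times \eqref{eq:ri}. Two qualifications: the interior integrals such as $\int_0^1 P_i(y)w(y,t)\,dy$ carry a zero-delay endpoint contribution from $h_1(t)$, so the reduced system is not purely retarded---harmless for the fixed-point argument since that dependence is Lipschitz in the current ODE state, but your phrase ``the delayed arguments are known from earlier data'' should be qualified; and the non-smoothness of the $\max$ is immaterial to the $C^{m-1}$ claim because $U_a$ feeds only $\dot h_m$, exactly as in the paper's chain $h_m\in C^{0}\Rightarrow h_1\in C^{m-1}$.
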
}
\begin{proof}
From $(z[0],w[0],X(0),Y(0))\in C^{m-1}([0,1])^2\times\mathbb R^m\times\mathbb R^n$, we obtain $(\alpha[0],\beta[0],H(0),Y(0))\in C^{m-1}([0,1])^2\times\mathbb R^m\times\mathbb R^n$ recalling the transformations \eqref{eq:contran1a}, \eqref{eq:contran1b}, \eqref{eq:ncti}--\eqref{eq:nctG}, where $H(0)=[h_1(0),\ldots,h_m(0)]^T$.

Timing $e^{c_m t}$ into both sides of \eqref{eq:yma}, and integrating from $0$ to $t$, we directly have the solution
$h_m(t)=h_m(0)e^{-c_m t}+\int_0^{t}e^{c_m(\tau-t)}\eta(\tau)d\tau$
on $t\in[0,\infty)$ is continuous in the weak sense. Similarly, we have $h_i(t)\in C^0([0,\infty);\mathbb{R})$, $i=1,2,\cdots,m-1$ on $t\in[0,\infty)$ considering \eqref{eq:target5}, \eqref{eq:targetym-1}. It implies that $h_1(t)\in C^{m-1}([0,\infty);\mathbb{R})$. Define $ h_1^{(m-1)}(t)=d(t)$ that implies $d(t)\in C^0([0,\infty);\mathbb{R})$, and $\beta_t^{(m-1)}[t]=u[t]$ that implies $u[0]\in C^{0}([0,1])$ considering $\beta[0]\in C^{m-1}([0,1])$.  Taking the $m-1$ time derivatives of \eqref{eq:target5}, \eqref{eq:target3} and rewriting them in term of $d,u$, recalling Theorem 2.6 in \cite{2019Karafyllis}, we have $u\in C^{0}([0,\infty)\times[0,1])$ that means that $\beta\in C^{m-1}([0,\infty)\times[0,1])$.  Similarly, recalling \eqref{eq:target2} and $\alpha[0]\in C^{m-1}([0,1])$, we also have $\alpha\in C^{m-1}([0,\infty)\times[0,1])$. Timing $e^{-A_{\rm Z}t}$ into both sides of \eqref{eq:targ5}, and integrating from $0$ to $t$, we have
$Z(t)=Z(0)e^{A_{\rm Z}t}+\int_0^{t}e^{A_{\rm Z}(t-\tau)}B\beta(0,\tau)d\tau$,
where $\beta(0,t)$ is continuous. It is straightforward to  have $Z\in C^0([0,\infty),\mathbb{R}^{n})$ in the weak sense.
Recalling the transformations \eqref{eq:ncti}--\eqref{eq:nctG}, \eqref{eq:Icontran1a}, \eqref{eq:Icontran1b}, \eqref{eq:ZTY}, we then obtain $(z,w,X,Y)\in C^{m-1}([0,\infty)\times[0,1])^2\times\mathbb C^0 ([0,\infty);\mathbb{R}^{m})\times\mathbb C^0([0,\infty);\mathbb{R}^{n})$.
\end{proof}
\begin{lema}\label{lem:estimate}
The finite-time exact identification of the unknown parameters is achieved, that is, there exists a finite updating time $t_f$, $f\in\mathbb Z^+$, such that
$\hat\theta (t)\equiv\theta,~\forall t\ge t_f$. Also, the set $D_i$ defined by \eqref{eq:Di} is shrunk to a singleton at $t_f$ and is kept at the singleton, which is nothing else but the unknown parameter's true value $\theta$, for $i\ge f$, $i\in\mathbb Z^+$.
\end{lema}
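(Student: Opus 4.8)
The plan is to exploit the noise-free structure of the identification equations \eqref{eq:f}, \eqref{eq:p}: since these regressor relations hold \emph{exactly} for the true $\theta$, the feasible set $D_i$ in \eqref{eq:Di} always contains $\theta$ (this is \eqref{eq:setD}) and, being defined as an intersection, is nested, $D_{i+1}\subseteq D_i$. Thus $\{D_i\}$ is a decreasing family of compact convex sets all containing $\theta$, and it suffices to prove that a single window $[\mu_{i+1},t_{i+1}]$ eventually carries enough information to force $D_i=\{\theta\}$; once this happens, nesting keeps $D_i$ at the singleton and the argmin in \eqref{eq:adaptivelaw} returns $\hat\theta\equiv\theta$ for all subsequent times.

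First I would decouple the problem using the block-diagonal structure of $G_{\bar n}$ in \eqref{eq:G}. The $b$-component is pinned down by the scalar relation \eqref{eq:p}: $b$ is uniquely recovered over the window iff $Q_{4}>0$, i.e.\ iff $q_{\rm b}(\cdot,\mu_{i+1})=\int_{\mu_{i+1}}^{\cdot}w(0,\tau)d\tau\not\equiv0$, which holds precisely when $w(0,\cdot)\not\equiv0$ on the window. The $(d_1,d_2)$-component is governed by the $2\times2$ Gram blocks of $g_{\bar n,1},g_{\bar n,2}$; the system stacked over all $\bar n$ has rank two--and hence forces $(\ell_1,\ell_2)=(d_1,d_2)$--unless there is a nonzero $(\delta_1,\delta_2)$ with $\delta_1 g_{\bar n,1}+\delta_2 g_{\bar n,2}\equiv0$ on the window for every $\bar n$.

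The crux is to characterize this residual degeneracy. Differentiating the identity in $t$ and invoking the completeness of $\{\sin(\pi\bar n x)\}$ in $L^2(0,1)$ turns it into the pointwise relation $\delta_1 w(x,t)+\delta_2 z(x,t)=0$ on the window; evaluating at $x=0$ and using the boundary condition \eqref{eq:o4}, $z(0,t)=pw(0,t)$, shows (whenever $w(0,\cdot)\not\equiv0$) that $\delta_1=-p\delta_2$, so the only obstruction to identifying $(d_1,d_2)$ is the collinearity $z\equiv pw$. I would then appeal to the excitation provisions of Remark \ref{rem:control}: the two pathological regimes $w(0,\cdot)\equiv0$ and $z-pw\equiv0$ are exactly the cases in which a small positive excitation is injected, and this excitation drives the trajectory off the degenerate manifold within a bounded number of intervals, producing a non-degenerate window at some finite index $f$. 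At that window $D_f=\{\theta\}$, and the conclusion follows.

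The main obstacle I anticipate is the last step--rigorously arguing that the degeneracy cannot persist for all time. In generic configurations non-degeneracy holds on the very first window and identification is immediate, but one must show that in the measure-zero degenerate configurations the excitation mechanism of Remark \ref{rem:control} genuinely breaks collinearity of $w,z$ (and non-vanishing of $w(0,\cdot)$) after finitely many triggerings, rather than merely perturbing the trajectory within the degenerate set; this is where the explicit construction of the excitation from the auxiliary signals $\varsigma_1,\varsigma_2$ and the fact that the injected values stay positive (so as not to violate the CBF constraint \eqref{eq:Ustartb}) must be used carefully.
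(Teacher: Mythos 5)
Your proposal follows essentially the same route as the paper's proof: nestedness of $D_i$ with $\theta\in D_i$, block decoupling of $b$ from $(d_1,d_2)$ via $Q_4$ and $w(0,\cdot)$, reduction of the Gram degeneracy to the pointwise collinearity $z\equiv pw$ (or identically vanishing $w$ or $z$) using completeness of $\{\sin(\pi \bar n x)\}$ in $L^2(0,1)$ together with the boundary condition $z(0,t)=pw(0,t)$, and the excitation provisions of Remark \ref{rem:control} to rule out persistent degeneracy. The one step you flag as an obstacle is closed in the paper exactly along the lines you anticipate: on each degenerate manifold the boundary traces $w(1,t),z(1,t)$ --- and hence the control input --- are uniquely determined (the signals $\varsigma_1(t)$, $\varsigma_2(t)$), so the injected excitation, which by construction makes the input differ from these signals on part of the next window while remaining in the safe set, forces the trajectory off the degenerate manifold within a single triggering interval, giving the contradiction.
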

\begin{proof}
The proof is given in Appendix\ref{AP:convergence}.
\end{proof}
The convergence time $t_f$ can be influenced by the free positive design parameter $T$ that is related to the amount of the measurement data used in parameter estimation. The larger $T$ improves the robustness to the measurement error but prolongs the time till exact parameter identification. On the contrary, the smaller $T$ contributes to the fast adaption, which is desired in safe adaptive control; however, the robustness to the measurement error may be reduced.
\begin{thme}\label{thm:safeadaptive}
For initial data $(w[0],z[0])^T\in C^{m-1}([0,1])$, ${\hat\theta(0)\in \Theta}$, $X(0)\in \mathbb R^m$, $Y(0)\in \mathbb R^m$ satisfying Assumptions \ref{as:f}--\ref{as:initialx1}, for design parameters $c_i$, $i=1,\ldots,m$ satisfying \eqref{eq:cin} and  $\kappa_i$, $i=1,\ldots,n-1$ satisfying \eqref{eq:kin}, the closed-loop system including the plant \eqref{eq:o1}--\eqref{eq:o7} with the safe (output-positive) adaptive controller \eqref{eq:Uas} has the following properties:

1) Safety (output positivity) is ensured in the sense that
$y_1(t)\ge0,~\forall t\ge 0$. Moreover, it runs in the original safe set like the nominal safe control after the finite time.

2) Exponential regulation of the plant states is achieved in the sense that $\|w(\cdot,t)\|^2+\|z(\cdot,t)\|^2+|X(t)|^2+|Y(t)|^2$ is exponentially convergent to zero.

3) The safe (output-positive) adaptive control input is exponentially convergent to zero, i.e., $\lim_{t\to \infty} U_a(t)=0$.
\end{thme}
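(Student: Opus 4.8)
The plan is to build on two facts established earlier: first, that the QP safety filter \eqref{eq:Uas} enforces the \emph{true} CBF inequality \eqref{eq:dymal} at every instant, \emph{irrespective} of whether the parameters are known; and second, that the batch least-squares identifier achieves exact identification in finite time (Lemma \ref{lem:estimate}), after which the adaptive closed-loop coincides with the nominal one and Theorem \ref{th:th1} applies verbatim. I would treat the three asserted properties in this order, since safety must hold throughout the learning transient whereas regulation and input decay are essentially inherited from the nominal analysis once $t\ge t_f$.

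For property 1, the first step is to observe that, because $\theta\in D_i$ for every $i$ by \eqref{eq:setD}, the feasibility constraint $u\ge \max_{\vartheta\in D_i}U^{*}(\chi;\vartheta)$ defining $\mathcal C(t)$ in \eqref{eq:Ustartb} implies $U_a\ge U^{*}(\chi;\theta)$. Recalling the form of $U^{*}$ in \eqref{eq:Ustart} and the perturbed dynamics \eqref{eq:yma}, this is precisely $\dot h_m(t;\theta)\ge -\bar c h_m(t;\theta)$. With the positive initialization $h_m(0)>0$ guaranteed by Lemma \ref{cl:hi0} under the robustified choice \eqref{eq:cin}, the integrating factor $e^{\bar c t}$ yields $h_m(t)\ge h_m(0)e^{-\bar c t}>0$ for all $t\ge 0$. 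Since $\eta$ enters only the $h_m$ equation (Remark \ref{eq:reta}), the links $\dot h_i=-c_i h_i+h_{i+1}$ and $\beta(1,t)=h_1(t)$ are unchanged, so propagating up the chain via the factors $e^{c_i t}$ with $h_i(0)>0$ gives $h_i(t)\ge 0$ for all $i$, and then the transport solution \eqref{eq:beta} gives $\beta(x,t)\ge 0$, hence $\beta(0,t)\ge 0$, for $t\ge \frac1{q_2}$. The unchanged $Z$-dynamics \eqref{eq:target1} together with Lemma \ref{cl:zi0} propagate positivity to $z_i$, so $y_1=z_1\ge 0$ on $[\frac1{q_2},\infty)$; Lemma \ref{lem:Y} covers $[0,\frac1{q_2})$, completing $y_1(t)\ge 0$ for all $t\ge 0$. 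For the ``moreover'' claim, I invoke Lemma \ref{lem:estimate}: for $t\ge t_f$ the set $D_i$ is the singleton $\{\theta\}$, so $\hat\theta(t_i)=\theta$, $U_d=U$, and since $\bar c\ge c_m$ with $h_m\ge0$ we have $U^{*}(\chi;\theta)=U+(c_m-\bar c)h_m\le U$; hence $U_d=U$ already lies in $\mathcal C(t)$ and the filter returns $U_a=U$, so the loop runs in the original safe set exactly as in the nominal design.

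For properties 2 and 3 the leverage is again Lemma \ref{lem:estimate}: for $t\ge t_f$ one has $\eta(t)\equiv 0$ in \eqref{eq:etaU}, so the target system \eqref{eq:target1}--\eqref{eq:targetym-1} with \eqref{eq:yma} reduces exactly to the nominal target \eqref{eq:target1}--\eqref{eq:targetym}. Theorem \ref{th:th1} (whose hypotheses hold, since \eqref{eq:cin}, \eqref{eq:kin} imply \eqref{eq:ci}, \eqref{eq:kappai} at the true $\theta\in\Theta$) then furnishes $\Xi(t)\le \frac{\xi_2}{\xi_1}\Xi(t_f)e^{-\sigma_0(t-t_f)}$ for $t\ge t_f$. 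On $[0,t_f]$, Proposition \ref{Pro:1} guarantees a unique continuous solution, hence $\Xi$ is bounded there; concatenating gives exponential decay of $\Xi$ for all $t\ge 0$ with an enlarged constant. Exponential regulation of $\|w\|^2+\|z\|^2+|X|^2+|Y|^2$ then follows from the bounded inverse transformations \eqref{eq:Icontran1a}, \eqref{eq:Icontran1b}, \eqref{eq:ZTY} and the boundary-trace estimates exactly as in the proof of Theorem \ref{th:th1}. Property 3 is immediate: since $U_a=U$ for $t\ge t_f$ and all signals in $U$ decay exponentially, $U_a(t)\to 0$.

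The main obstacle I anticipate is the safety step during the learning transient, specifically the rigorous justification that the robustified filter enforces the unknown-parameter inequality $\dot h_m(t;\theta)\ge-\bar c h_m(t;\theta)$: this hinges on the containment $\theta\in D_i\subseteq\Theta$ from \eqref{eq:setD} and on the filter's feasibility, which holds because $\mathcal C(t)$ is a half-line and hence nonempty. A secondary subtlety is that $\hat\theta$ is piecewise constant, so $\eta$ is only piecewise continuous with jumps at the triggering times $t_i$; however, since the state trajectory is continuous by Proposition \ref{Pro:1} and $\eta\equiv0$ after $t_f$, I only need boundedness of $\Xi$ on the finite interval $[0,t_f]$ rather than any delicate Lyapunov estimate involving $\eta$, so this does not obstruct the argument.
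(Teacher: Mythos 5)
Your proposal is correct and follows essentially the same route as the paper: safety during learning comes from $\theta\in D_i$ forcing $U_a\ge U^{*}(\chi;\theta)$ and hence $\dot h_m\ge-\bar c h_m$ with positive initialization, while regulation and input decay are obtained by invoking Lemma \ref{lem:estimate} to get $U_a=U_d=U$ (so $\eta\equiv0$) for $t\ge t_f$ and then reusing the nominal Lyapunov/inverse-transformation argument, with the finite transient on $[0,t_f]$ absorbed into the constant. Your justification of the transient bound via well-posedness (Proposition \ref{Pro:1}) is a harmless, equally valid substitute for the paper's linear-system overshoot remark, and your explicit propagation of positivity up the $h_i$ chain matches the argument the paper delegates to the proof of Theorem \ref{th:th1}.
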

\begin{proof}
 1) According to \eqref{eq:Ustartb}--\eqref{eq:setD}, the  controller \eqref{eq:Uas} with the adaptive CBF constraint guarantees $\dot h_m(t)=-\bar ch_m(t)+\bar\eta(t)$ where $\bar\eta(t)=U_a(t)-U^{*}(\chi;\theta)\ge 0$ because of $U_a\ge \max_{\vartheta\in D_i} U^{*}(\chi;\vartheta)\ge U^*(\chi;\theta)$ due to \eqref{eq:setD}. It implies that   $h_m(t)\ge\bar\eta(t)\ge 0$ all the time recalling $h_m(0)>0$ ensured by the choices of design parameters \eqref{eq:cin}. Especially, for $t\in[t_f,\infty)$, it follows from Lemma \ref{lem:estimate} that $U_d(t)=U(t)$ recalling the nominal controller $U(t)$ \eqref{eq:U1} and the adaptive controller $U_d$ \eqref{eq:Ud}, as well as the control input's safe region boundary $\max_{\vartheta\in D_i} U^{*}(\chi;\vartheta)=U^{*}(\chi;\theta)$ in \eqref{eq:Ustart}. Then we have $\max_{\vartheta\in D_i} U^{*}(\chi;\vartheta) - U_d(t)=U^{*}(\chi;\theta) - U(t)=(c_m-\bar c)h_m(t)\le 0$ for $t\ge t_f$ recalling $\bar c\ge c_m$ and $h_m(t)\ge 0$ proven above. It implies that $U_d\in \mathcal C$ because of \eqref{eq:Ustartb}, and thus $U_a=U_d=U$ for $t\ge t_f$ according to \eqref{eq:Uas}, which means that
 \begin{align}
 \eta(t)\equiv 0,~\forall t\ge t_f\label{eq:eta0}
\end{align}
 in \eqref{eq:yma} because of \eqref{eq:eta}. Therefore, $\dot h_m(t)=-c_m h_m(t)$ holds on $t\in[t_f,\infty)$, i.e., the state runs in the original safe set like the nominal safe control after the finite time $t_f$. The property 1 in this theorem is thus obtained.

2)
Considering the Lyapunov function \eqref{eq:V},  choose the analysis parameters $a_0$ and $r$ as \eqref{eq:ra} and \eqref{eq:r}, where the upper bound $\overline b$ is used to replace the unknown $b$ in \eqref{eq:ra}, i.e.,
${a_0}>{\frac{q_1p^2}{q_2}}+\frac{{4|P\overline B|^2}}{{q_2}\lambda_{\rm min}(Q)}$ with $\overline B=[0,0,\ldots,\overline b]^T$.

Taking the time derivative of $V(t)$ along  the target system \eqref{eq:target1}--\eqref{eq:targetym-1}, \eqref{eq:yma},
 recalling $c_i$ defined by \eqref{eq:cin}, through a similar process with \eqref{eq:dLy}--\eqref{eq:sigma0}, we thus arrive at
\begin{align}
\dot V(t)\le -\sigma_0 V(t)-rh_m(t)\eta(t),\label{eq:dLyaa}
\end{align}
for $t\ge 0$, where $\sigma_0$ is defined in \eqref{eq:sigma0}.

According to \eqref{eq:eta0}, it follows \eqref{eq:dLyaa} that
$\dot V(t)\le -\sigma_0 V(t)$, $\forall t\ge t_f$,
that is,
\begin{align}
V(t)\le V(t_f)e^{-\sigma_0 (t-t_f)},~~\forall t\ge t_f.\label{eq:Vgetf}
\end{align}
Considering Remark \ref{eq:reta}, the target system \eqref{eq:target1}--\eqref{eq:targetym-1}, \eqref{eq:yma} is a fully linear ODE-PDE-ODE coupled system.
It should be noted that the
transient of such a linear system in the finite time $t\in[0,t_f)$ can be bounded by an arbitrarily fast decay rate
considering a trade off between the decay rate and the overshoot coefficient, i.e., the
higher the decay rate, the higher the overshoot coefficient.
Therefore, recalling \eqref{eq:Xibound}, we arrive at
$$\Xi(t)=\Upsilon\Xi(0)e^{-\sigma_0 t},~~\forall t\ge 0$$
for some positive $\Upsilon$.
Following the proof below \eqref{eq:targetstability} in the proof of the property 2 in Theorem \ref{th:th1}, we obtain the property 2 in this theorem.

 3) The proof is similar to that of the property 3 in Theorem \ref{th:th1}.
\end{proof}
\section{Simulation}
\subsection{Model}\label{sec:simmod}
The considered simulation model is \eqref{eq:o1}--\eqref{eq:o7} with the parameters
$A=[0,1;l_1,l_2]=[0,1;1,-0.5], B=[0,b]^T=[0,1]^T, q_1=q_2=1, d_1=0.8,
d_2=1, m=2, n=2, p=1$, $\bar q_0=1$, $\bar q_1=1$, $M^T=[0.1,0.3]$,
and the functions $f_1$ and $f_2$ in \eqref{eq:o6}, \eqref{eq:o7} are
$f_1(\underline x_1)=x_1^2,~f_2(\underline x_2)=x_1x_2$.
The known bounds of the unknown parameters $d_1,d_2,b$ are set as
$\overline d_1=\overline d_2=1.2,~\underline d_1=\underline d_2=0.2,~\overline b=1.5,~\underline b=0.5$.
The initial values are defined as
$w(x,0)=\cos(2\pi x),~z(x,0)=2\sin(3\pi x),~x_1(0)=1,~x_2(0)=-1,~y_1(0)=5,~y_2(0)=0,~\hat d_1(0)=\hat d_2(0)=0.2,~\hat b(0)=0.5$. {The initial values in the simulation satisfy Assumptions \ref{as:initial}, \ref{as:initialx1}. The simulation model is open-loop unstable, where the unstable sources exist in every subsystem.}
\subsection{Controller}
Following the control design in Sec. \ref{sec:TandB}, we obtain the nominal controller as
\begin{align}
&U(t)
=-c_2x_2(t)-c_2c_1x_1(t)-c_2x_1^2-x_1x_2-c_1(x_{2}+x_1^2)\notag\\&+\int_0^1 \bigg(c_2c_1{\Psi}(1,y)+(c_1+c_2)R_1(y)+R_2(y)\bigg)z(y,t)dy \notag\\
&+  \int_0^1 \bigg(c_2c_1{\Phi}(1,y)+(c_1+c_2)P_1(y)+P_2(y)\bigg)w(y,t)dy\notag\\&-\bigg((c_1+c_2)q_1R_{0}(1)+{\bar q_0}+q_1R_{1}(1)\bigg)z(1,t)\notag\\&+\bigg((c_1+c_2)q_1R_{0}(0)+q_1R_{1}(0)\bigg)z(0,t)\notag\\
&+\bigg((c_1+c_2)q_2P_{0}(1)+q_2P_{1}(1)\bigg)w(1,t)\notag\\&-\bigg((c_1+c_2)\left(q_2P_{0}(0)-\lambda (1)B\right)+q_2P_{1}(0)-\lambda (1)AB\bigg)w(0,t)\notag\\&+ \bigg[\lambda (1)\bigg((c_1+c_2)A+c_2c_1+ A^{2}\bigg){-M^T}\bigg]{Y}(t)\notag\\&-(q_1R_{0}(1)+{\bar q_1})z_{t}(1,t)+q_1R_{0}(0)z_{t}(0,t)+q_2P_{0}(1)w_{t}(1,t)
\notag\\&-\left(q_2P_{0}(0)-\lambda (1)B\right)w_{t}(0,t)\label{eq:Usim}
\end{align}
where $\lambda (x)$ is given by \eqref{eq:kerf} with $K$ defined in \eqref{eq:K} as  $$K^T=-\frac{1}{b}[l_1+\kappa_1\kappa_2, l_2+\kappa_1+\kappa_2],$$ and where
\begin{align*}
R_0(y)&={\Psi}(1,y),~P_0(y)={\Phi}(1,y),\\
R_1(y)
&=q_1{\Psi}_y(1,y)+d_2{\Phi}(1,y),\\
P_1(y)
&=-q_2{\Phi}_y(1,y)+d_1{\Psi}(1,y),\\
R_2(y)
&=q_1^2{\Psi}_{yy}(1,y)+(q_1-q_2)d_2{\Phi}_y(1,y)+d_1d_2{\Psi}(1,y),\\
P_2(y)
&=q_2^2{\Phi}_{yy}(1,y)+(q_1-q_2)d_1{\Psi}_y(1,y)+d_1d_2{\Phi}(1,y).
\end{align*}
The explicit solutions of ${\Psi}(1,y),{\Phi}(1,y)$  are given in \eqref{eq:exk1}, \eqref{eq:exk2} and \eqref{eq:traker3}, \eqref{eq:F}, \eqref{eq:H}.

Based on the nominal safe control \eqref{eq:Usim}, following the control design in Sec. \ref{sec:adaptive}, the safe adaptive controller \eqref{eq:Uas} is obtained. In details,
the parameter estimator $\hat\theta(t)$ \eqref{eq:adaptivelaw} is obtained by using \eqref{eq:mui}, \eqref{eq:fn}--\eqref{eq:ga} and \eqref{eq:G}--\eqref{eq:Q4}. The switching time $t_f$ is $t_f=t_1=T$ considering the nonzero initial values $w(x,0),z(x,0)$ given in Sec. \ref{sec:simmod}.
Inserting the estimate $\hat\theta(t)$ into \eqref{eq:Usim} to replace the unknown $\theta=[d_1,d_2,b]^T$, we thus obtain the (potentially unsafe) adaptive controller $U_d(\chi;\hat \theta)$. Recalling \eqref{eq:Ustart}, we obtain $U^*$ that is the result of replacing $c_2$ in \eqref{eq:Usim} with $\bar c$. Considering $t_f=t_1$ and $C_0=\Theta$, the signal $\max_{\vartheta\in C_0} U^{*}(\chi;\vartheta)$  is approximately treated as seeking the maximum of $U^*(t;\vartheta)$ with respect to $\vartheta$ on a gridded $\Theta$, where the range between the known upper and lower bounds given in Sec. \ref{sec:simmod} is divided by the interval of 0.2.
We choose the design parameters $c_1,c_2,\kappa_1,\kappa_2,\bar c, \bar n, \tilde N, T$ in the nominal safe controller and safe adaptive controller (where the nominal controller only requires $c_1,c_2,\kappa_1,\kappa_2$) as follows. The design parameters $c_1=38,c_2=20,\kappa_1=30,\kappa_2=10$ are chosen to satisfy the condition \eqref{eq:kin}, \eqref{eq:cin} with \eqref{eq:kappai}, \eqref{eq:underc}, and the arbitrary positive design parameters  $\bar c=1,\bar n=1, \tilde N=10, T=1.5$ are chosen considering the following trade-off in implementation. Increasing the design parameter
$T,\tilde N$ will let more measured data take part in estimation, which is helpful to improve the estimation accuracy but
will prolong the duration of adaptive learning. The increase of $\bar n$ contributes to finding the true values on time at the cost of more computing resources. The adjustment of $\bar c$ will affect the adaptive control input's safe region adopted in the QP safety filter.

Tips on simulation of the parameter identifier \eqref{eq:adaptivelaw}: 1) Approximating the integration with respect to the space variable in the identifier \eqref{eq:adaptivelaw} as a summation operator affects the parameter identification precision of the unknown parameters $d_1,d_2$ in the PDE domain. The smaller space step adopted in the simulation will make the error smaller. Besides, a larger design parameter $T$ can reduce the effect of this approximation error
to some extent but will slow the adaption; 2) If the difference between the estimates from the identifier at two adjacent updating times is smaller than $5\%$ of the true value, we consider that this difference is caused by the approximation error in the simulation, and thus keep the estimated value as same as the one at the former updating time.
\subsection{Simulation Result}
We conduct the simulation by the finite-difference method with a time step of 0.001 and a space step of 0.002 for the adaptive case, where the relatively small space step is selected to reduce the approximation error of integration in the identifier mentioned above, for the nominal case a larger space step as 0.05 can be used to save the computation time.
\begin{figure}
\centering
\includegraphics[width=7cm]{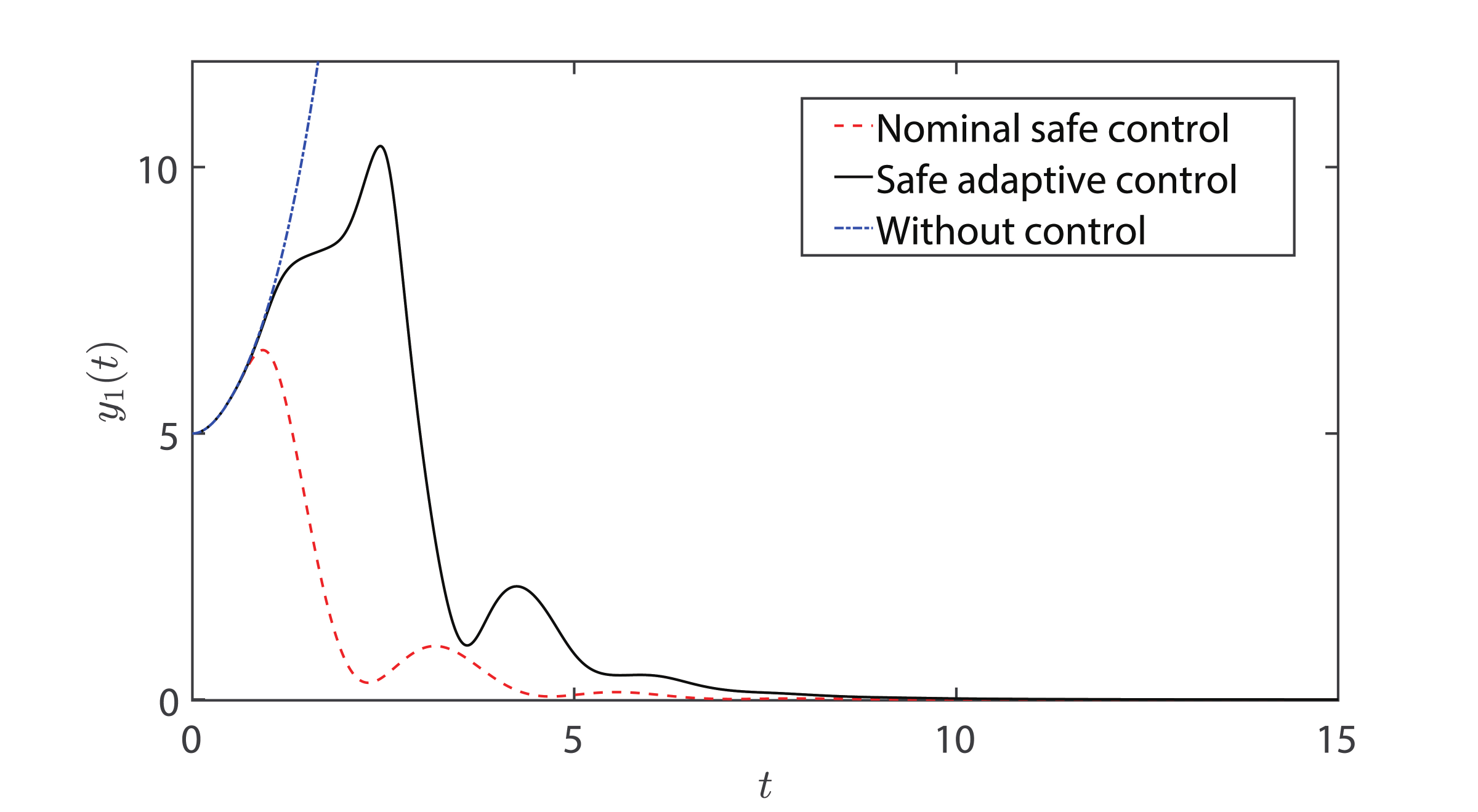}
\caption{Responses of the state to be safely regulated.}
\label{fig:y1}
\end{figure}
\begin{figure}
\begin{minipage}{0.49\linewidth}
  \centerline{\includegraphics[width=4.8cm]{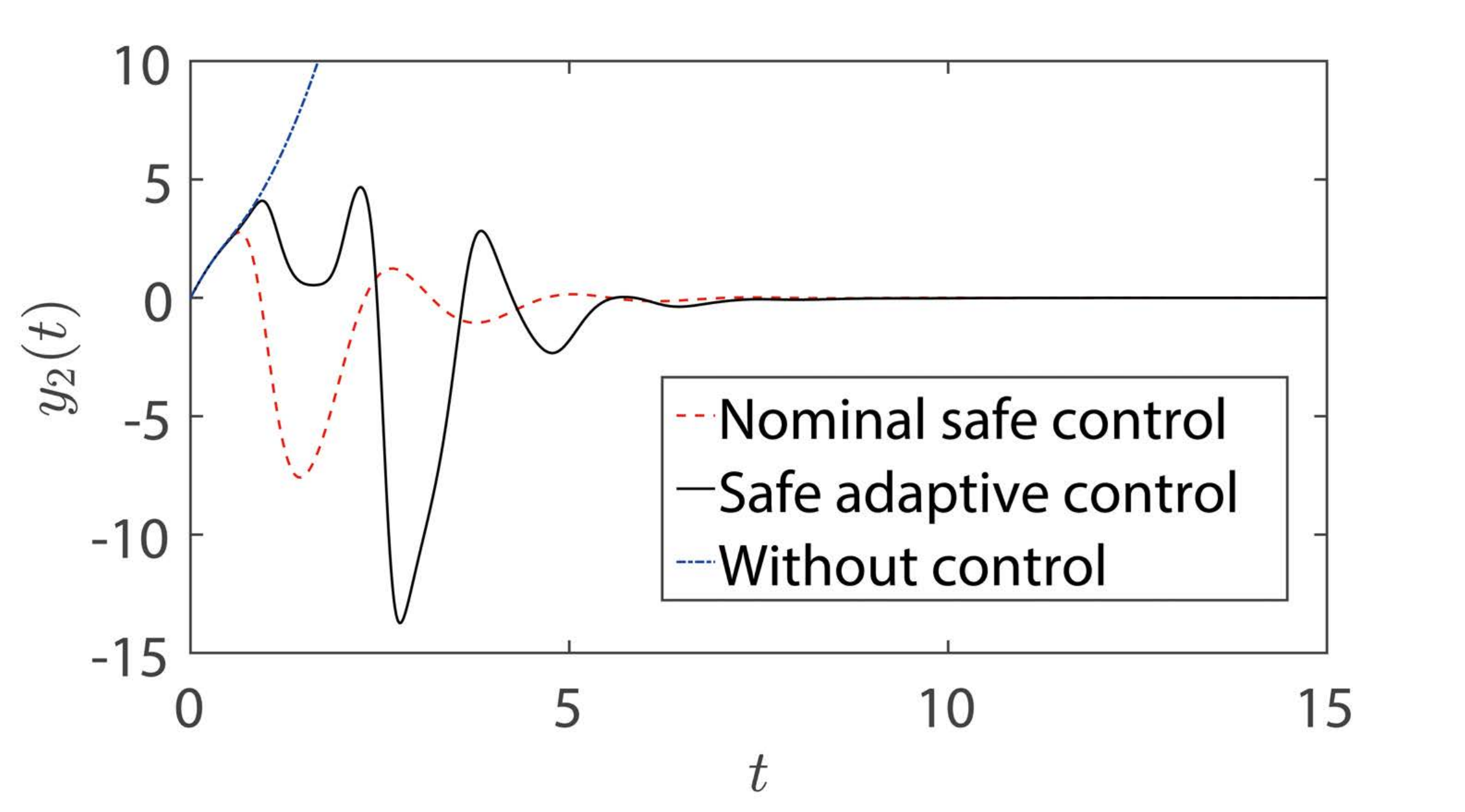}}
  \centerline{(a) $y_2(t)$}
\end{minipage}
\hfill
\begin{minipage}{.49\linewidth}
  \centerline{\includegraphics[width=4.8cm]{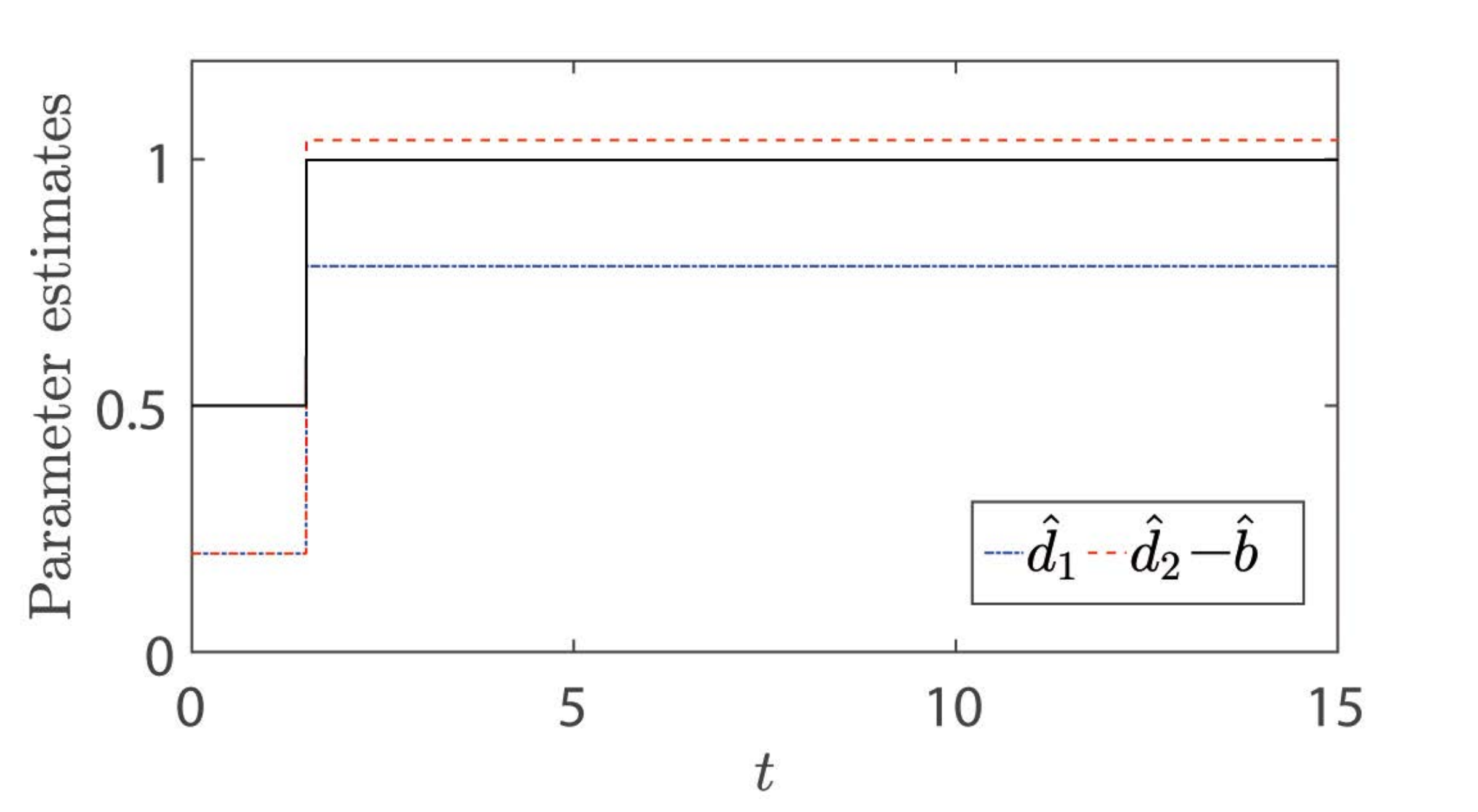}}
  \centerline{(b) estimates}
\end{minipage}
\caption{Responses of $y_2(t)$ and the parameter estimates.}
\label{fig:y2}
\end{figure}
\begin{figure}
\begin{minipage}{0.49\linewidth}
  \centerline{\includegraphics[width=4.7cm]{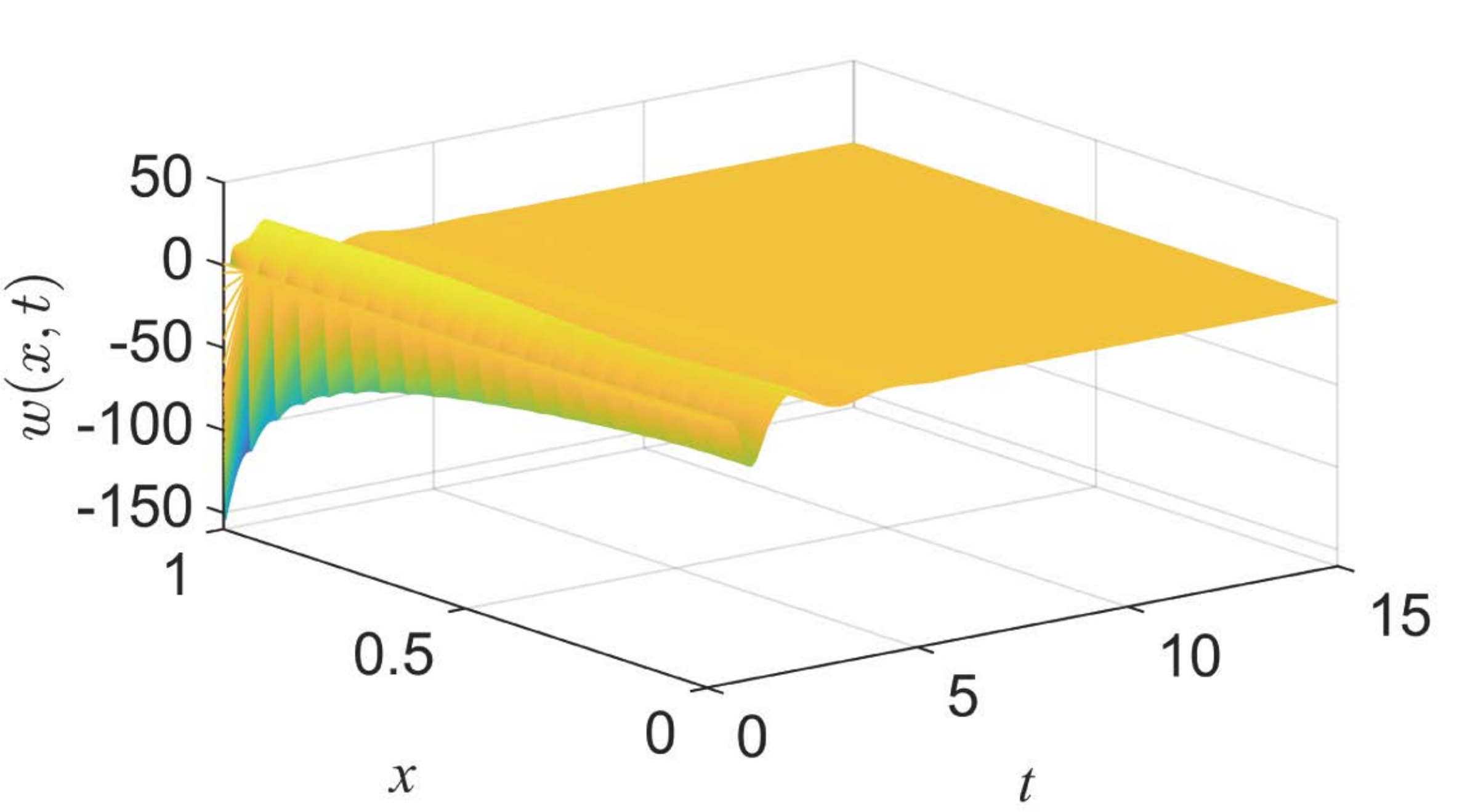}}
  \centerline{(a) Nominal safe control}
\end{minipage}
\hfill
\begin{minipage}{.49\linewidth}
  \centerline{\includegraphics[width=4.7cm]{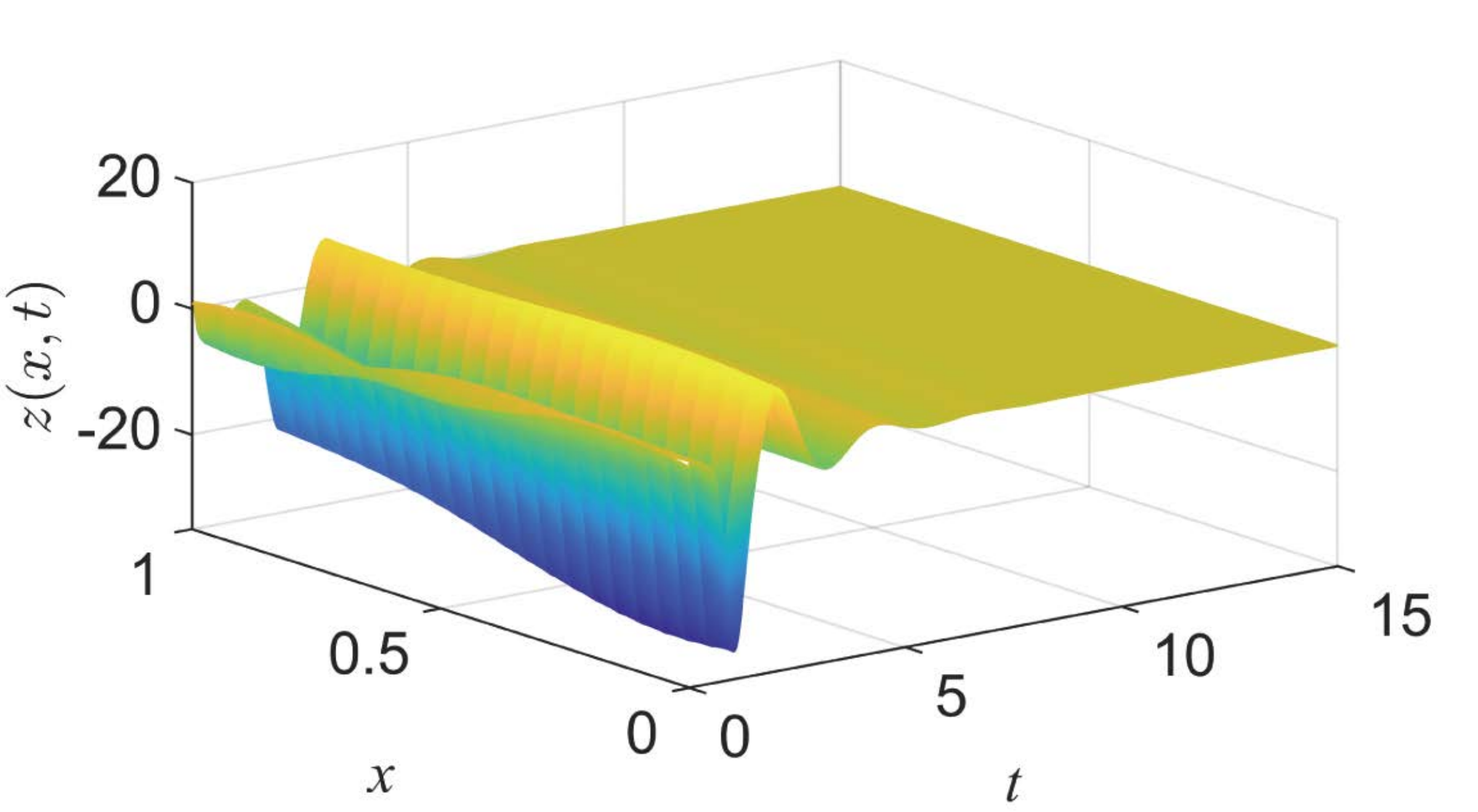}}
  \centerline{(b)  Nominal safe control}
\end{minipage}
\vfill
\begin{minipage}{0.49\linewidth}
  \centerline{\includegraphics[width=4.7cm]{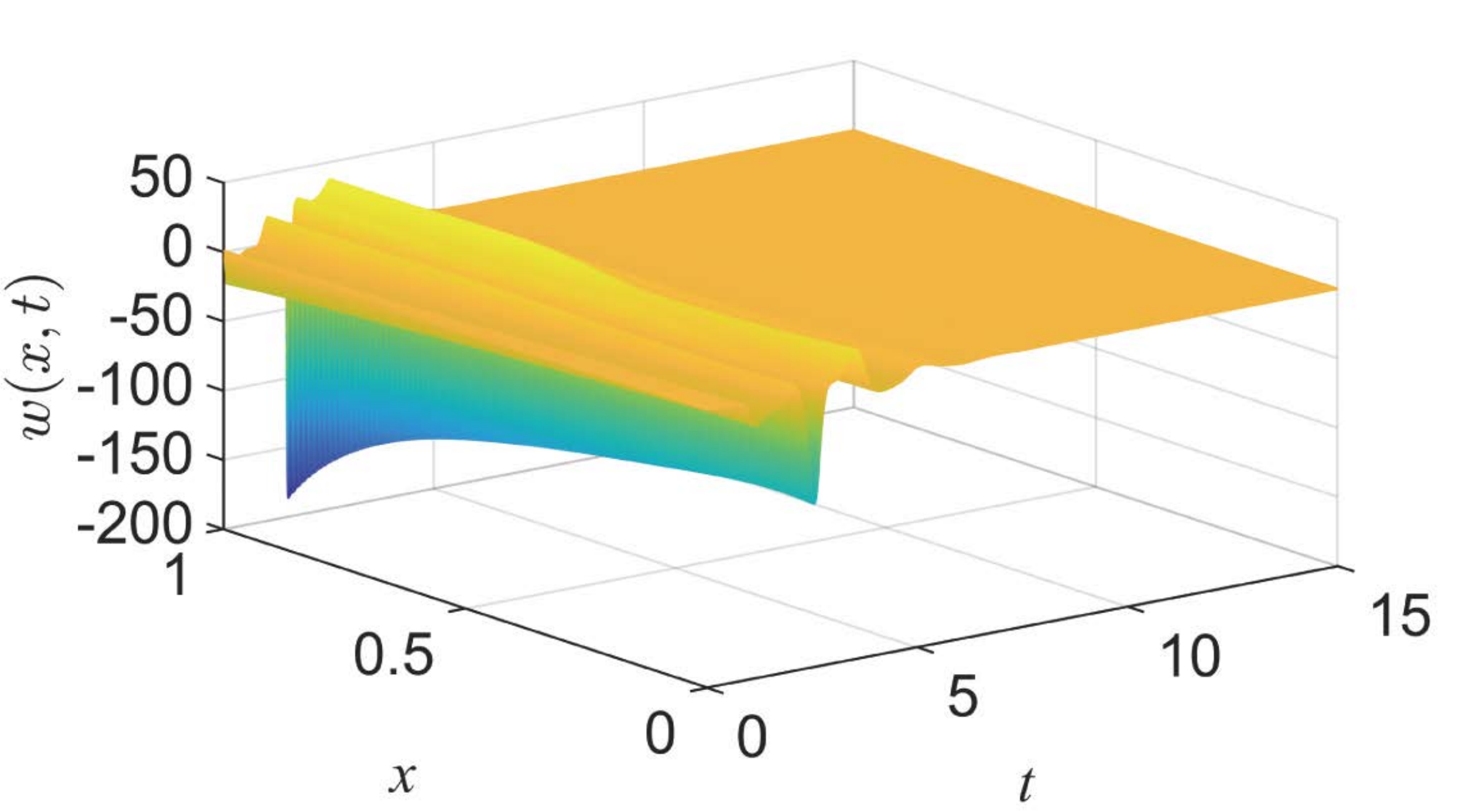}}
  \centerline{(c)  Safe adaptive control}
\end{minipage}
\hfill
\begin{minipage}{0.49\linewidth}
  \centerline{\includegraphics[width=4.7cm]{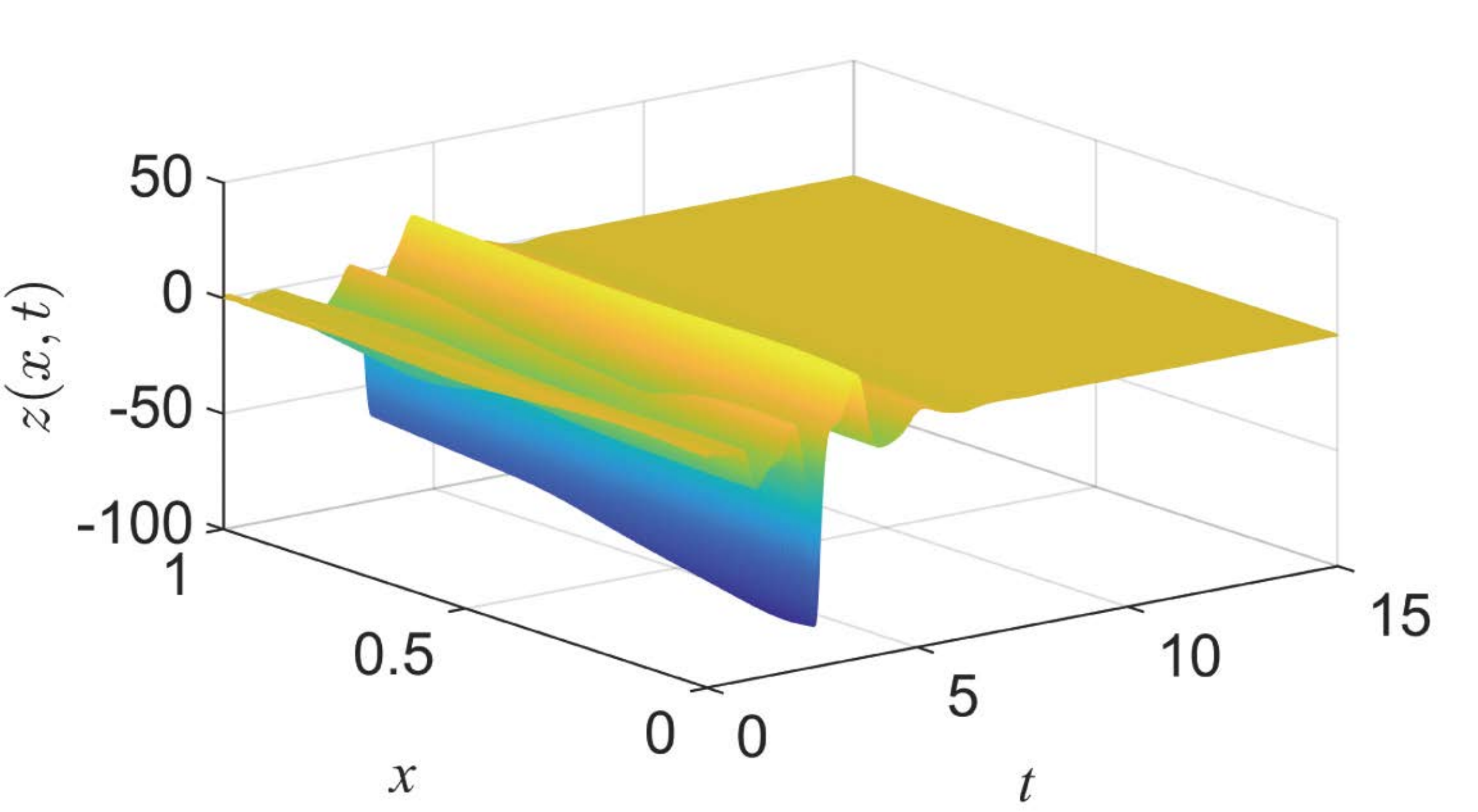}}
  \centerline{(d) Safe adaptive control}
\end{minipage}
\caption{Responses of $w(x,t),z(x,t)$ under the nominal safe
and safe adaptive controllers.}
\label{fig:wz}
\end{figure}
\begin{figure}
\begin{minipage}{0.49\linewidth}
  \centerline{\includegraphics[width=4.8cm]{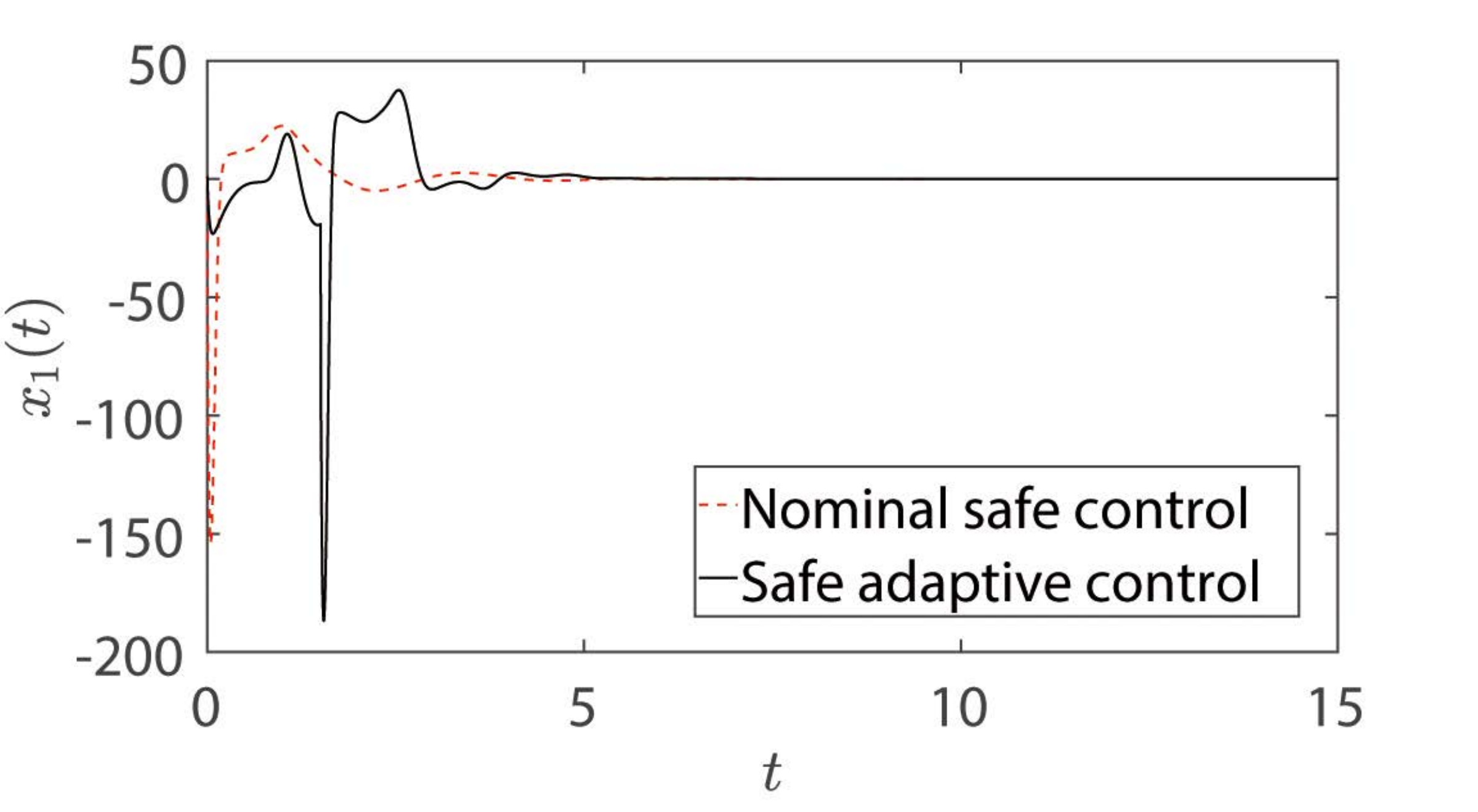}}
  \centerline{(a) $x_1(t)$}
\end{minipage}
\hfill
\begin{minipage}{.49\linewidth}
  \centerline{\includegraphics[width=4.8cm]{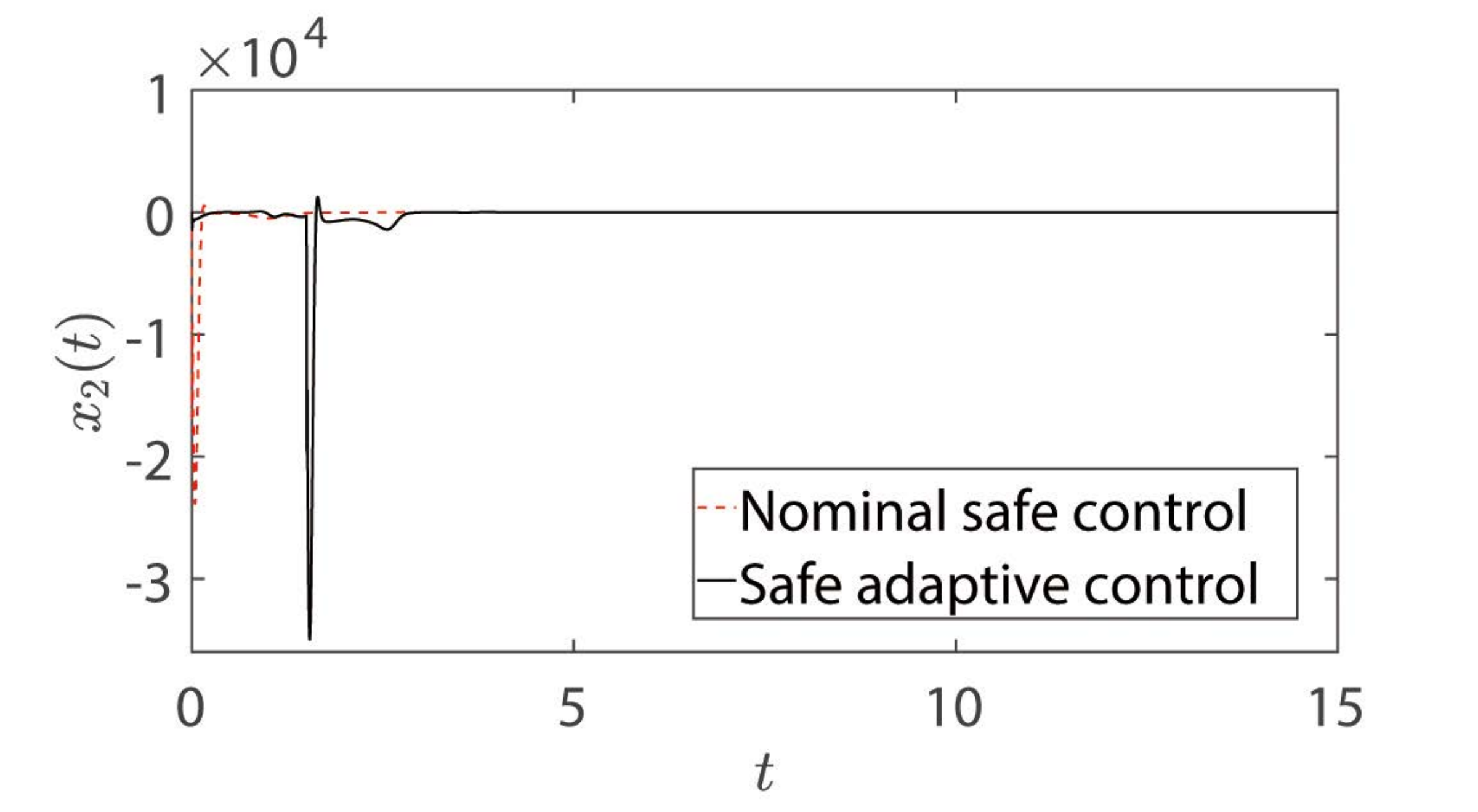}}
  \centerline{(b) $x_2(t)$}
\end{minipage}
\caption{Responses of $x_1(t), x_2(t)$ under the nominal safe
and safe adaptive controllers.}
\label{fig:x1x2}
\end{figure}

The response of the distal ODE's output state $y_1(t)$ that is expected to be kept in the safe region, i.e., the non-negativity, the other state $y_2(t)$ in the distal ODE, and the estimates of the unknown plant parameters $d_1,d_2,b$ are shown in Figs. \ref{fig:y1}, \ref{fig:y2}. In  Fig. \ref{fig:y1}, the blue dot-dash line denotes the result without control, the red dash line denotes the result with nominal safe (output-positive) control $U$, the black solid line shows the result under the safe (output-positive) adaptive controller $U_a$. Three results have the same behavior before $t=1$ s because in this time period no control action reaches the $Y$-ODE and the responses only depend on the initial values of the plant.  As compared to the nominal safe control, even though the mismatch between the parameter estimates and their true values degrades the control performance of the safe adaptive controller before around $t=2.5$ (the adaption time is 1.5 s that can be seen in b) in Fig. \ref{fig:y2},  and the time taken by the updated control actions spreading from $x=1$ to $x=0$ is 1s), the response under the safe adaptive control begins to fast converge to zero like the nominal control result after 2.5 s.  Both nominal safe and safe adaptive controllers can constrain the output state $y_1(t)$ in the safe region, i.e., $y_1(t)\ge 0$, and achieve the exponential convergence to zero of $y_1(t)$, while the state diverges in the open loop since the simulation model is open-loop unstable.
We know from Fig. \ref{fig:y2}-b that the exact identification of the unknown parameters is achieved at the first triggering time, under the nonzero initial values given in Sec. \ref{sec:simmod}. Please note that the tiny difference between the final estimates $\hat d_1,\hat d_2$ and their true values comes from the approximation error of integration mentioned at the end of the last subsection. It is shown in Figs \ref{fig:wz}, \ref{fig:x1x2} that the PDE plant states $z(x,t)$, $w(x,t)$ and the nonlinear ODE states $x_1(t),x_2(t)$ all converge to zero under the nominal safe and safe adaptive control inputs.
\section{Conclusion and Future Work}
In this paper, we present a safe adaptive control design method for
$2\times 2$ hyperbolic PDEs sandwiched between a strict-feedback nonlinear
ODE on the actuated side and a linear ODE on the uncontrolled
side. The coefficients of the unstable sources in the PDE domain and of the input signal in the distal ODE to be kept safe are unknown. Our contort design guarantees: 1) the finite-time exact parameter identification of the unknown parameters; 2) the safety of the state furthermost from the control input; 3) the exponential regulation of overall plant states to zero. The
 numerical simulation illustrates the validity of the proposed control design.

In practice, the measurement error will affect the parameter identification precision and also lead to slight fluctuation of the parameter estimates even if on $t\ge t_f$, which has not been dealt with in this paper. One way to reduce the effect of the measurement error is to increase the design parameter $T$, $\tilde N$ or postpone the switching time $t_f$, with the purpose of letting more measured data take part in estimation. This will prolong the duration of adaptive learning, during which the state runs in a conservative safe region. How to improve the robustness with respect to the aforementioned errors in practice while avoiding overly large convergence time of the parameter estimates is our future work.
\section*{Appendix}
\setcounter{subsection}{0}
\setcounter{section}{0}
\subsection{Kernel conditions}
\subsubsection{Kernel conditions in the transformation \eqref{eq:contran1a}, \eqref{eq:contran1b}}\label{sec:ker}
\setcounter{equation}{0}
\renewcommand{\theequation}{A.\arabic{equation}}
By  mapping  \eqref{eq:ZA}, \eqref{eq:o2}--\eqref{eq:o4} and \eqref{eq:targ5}--\eqref{eq:targ4} with using \eqref{eq:o1}, the conditions of the kernels $\varphi,\phi,\Psi,\Phi,\gamma$ and $\lambda$ in the backstepping transformation \eqref{eq:contran1a}, \eqref{eq:contran1b} are obtained as the following equations
\begin{align}
&{q_2}{{\varphi }_y}(x,y) - {q_1}{{\varphi }_x}(x,y) - {d_1}\phi (x,y)=0,\label{1}\\
&{  {q_1}{{\phi }_x}(x,y) {+} {q_1}{{\phi }_y}(x,y)} {+} {d_2}\varphi (x,y)=0,\\
& {q_2}{{\Psi }_x}(x,y)  - {q_1}{{\Psi }_y}(x,y)  - {d_2}\Phi (x,y)=0,\label{eq:kerf0}\\
&{{q_2}{\Phi _x}(x,y)+{q_2}{\Phi _y}(x,y)  - {d_1}\Psi (x,y)}=0,\label{eq:kerf1}
\end{align}
with the boundary conditions
\begin{align}
&\varphi (x,x)=\frac{d_1}{{q_1} + {q_2}},\label{eq:ker1}\\
&\phi (x,0)= \frac{q_2}{{q_1}p}\varphi (x,0)-\frac{1}{{q_1}p}{\gamma}(x)B,\\
&\Psi (x,x)=\frac{-d_2 }{{q_1} + {q_2}} ,\label{eq:Psi}\\
& \Phi (x,0) = \frac{{q_1}p}{{q_2}}\Psi (x,0)+ \frac{1}{{q_2}}\lambda (x)B,\label{eq:Phi0}
\end{align}
evolving in the triangular domain $\mathcal D=\{(x,y):0\le y\le x\le 1\}$, where
\begin{align}
&{\lambda (x)=K^T e^{\frac{Ax}{q_2}}},\label{eq:kerf}\\
& {{\gamma}(x)=pK^Te^{\frac{-Ax}{q_1}}}   \label{eq:ker6}
\end{align}
with $K^T$ defined in \eqref{eq:K}.
The well-posedness of the equation set \eqref{1}--\eqref{eq:Phi0} is ensured by Theorems 4 and 5 in \cite{Vazquez2011Backstepping}. Also,  we show the explicit solution of \eqref{eq:kerf0}, \eqref{eq:kerf1}, \eqref{eq:Psi}, \eqref{eq:Phi0}, that is--the explicit expression of kernels $\Psi$, $\Phi$ that are used in the control law, in Appendix\ref{AP:ex}.

\subsubsection{Kernel conditions in the inverse transformation \eqref{eq:Icontran1a},
\eqref{eq:Icontran1b}}\label{sec:inker}
By matching \eqref{eq:o2}--\eqref{eq:o5}, \eqref{eq:ZA} and \eqref{eq:targ5}--\eqref{eq:targ8} with using \eqref{eq:o1}, the conditions of kernels ${\bar\phi},\bar\varphi,\bar\gamma,\bar\Psi,\bar\Phi,\bar\lambda$ are obtained as the following equations
\begin{align}
&q_2\bar{\Phi}_x(x,y)+q_2\bar{\Phi}_y(x,y)+d_2\bar{\varphi}(x,y)=0,\label{eq:kkk1} \\
  &q_1\bar{\varphi}_x(x,y)-q_2\bar{\varphi}_y(x,y)-d_1\bar{\Phi}(x,y)=0, \\
		&q_2\bar{\Psi}_x(x,y)-q_1\bar{\Psi}_y(x,y)+d_2\bar{\phi}(x,y)=0, \\
		&q_1\bar{\phi}_x(x,y)+q_1\bar{\phi}_y(x,y)-d_1\bar{\Psi}(x,y)=0
  \end{align} with the boundary conditions
  \begin{align}
		&\bar{\varphi}(x,x)=\frac{-d_1}{q_1+q_2}, \\
  &\bar{\phi}(x,0)=\frac{q_2}{q_1p}\bar{\varphi}(x,0)-\frac{1}{q_1p}\bar{\gamma}(x)B,\\
  &\bar{\Psi}(x,x)=\frac{d_2}{q_1+q_2}, \\
&\bar{\Phi}(x,0)=\frac{pq_1}{q_2}\bar{\Psi}(x,0)+\frac{1}{q_2}\bar{\lambda}(x)B \label{eq:kkkn}
  \end{align} evolving in the triangular domain $\mathcal D=\{(x,y):0\le y\le x\le 1\}$, where $\bar{\gamma}(x), \bar{\lambda}(x)$ satisfy:
  \begin{align}
		&q_2\bar{\lambda}_x(x)-\bar{\lambda}(x)(A+BK^T)+d_2\bar{\gamma}(x)=0, \label{eq:eee1}\\
		&\bar{\lambda}(0)=-K^T, \\
		&q_1\bar{\gamma}_x(x)+\bar{\gamma}(x)(A+BK^T)-d_1\bar{\lambda}(x)=0, \\
		&\bar{\gamma}(0)=-pK^T. \label{eq:eee4}
	\end{align}
 The well-posedness of \eqref{eq:kkk1}--\eqref{eq:kkkn} is also ensured by Theorems 4 and 5 in \cite{Vazquez2011Backstepping}. The solution of the initial value problem  \eqref{eq:eee1}--\eqref{eq:eee4} is straightforward.
\subsection{Explicit kernels in the control law}\label{AP:ex}
\setcounter{equation}{0}
\renewcommand{\theequation}{B.\arabic{equation}}
To obtain the explicit control law, we obtain the explicit expression of the kernels $\Psi$, $\Phi$ that will be used in the control law in this subsection. We first propose a transformation to rewrite \eqref{eq:kerf0}, \eqref{eq:kerf1}, \eqref{eq:Psi}, \eqref{eq:Phi0} into the form of the kernel conditions in \cite{Vazquez2014Marcum}, then apply the result in \cite{Vazquez2014Marcum} and recall the proposed transformation to obtain the explicit solution $\Psi$, $\Phi$.

First, we propose a transformation
\begin{align}
&\Psi(x,y)={F}(x,y)+\int_y^x L(x,r){F}(r,y)dr\label{eq:traker1}\\
&\Phi(x,y)={H}(x,y)-L(x,y)+\int_y^x L(x,r){H}(r,y)dr\label{eq:traker2}
\end{align}
where
\begin{align}
L(x,y)=\frac{-1}{q_2}  \lambda(x-y) {B}.\label{eq:traker3}
\end{align}
Then inserting \eqref{eq:traker1}--\eqref{eq:traker3} into \eqref{eq:kerf0}, \eqref{eq:kerf1}, \eqref{eq:Psi}, \eqref{eq:Phi0}, one obtains that
the conditions of $F$, $H$ are
\begin{align}
&{q_2}{F}_x(x,y)- {q_1}{F}_y(x,y)={d_2}{H}(x,y),\label{eq:ex5}\\
&{q_2}{H}_x(x,y)+{q_2}{H}_y(x,y)= {d_1}{F}(x,y),
\end{align}
evolving in the triangular domain $\mathcal D=\{(x,y):0\le y\le x\le 1\}$ with the boundary condition
\begin{align}
{F}(x,x)=\frac{-d_2 }{{q_1} + {q_2}} ,
~{H}(x,0) = \frac{q_1}{q_2}p{F}(x,0),\label{eq:ex8}
\end{align}
where $L_x(x,y)+L_y(x,y)=0$ results from \eqref{eq:traker3} has been used.
Applying the result in \cite{Vazquez2014Marcum} where Bessel functions and the generalized Marcum $Q$-functions of
the first order are used, the explicit solution $F$, $H$  of \eqref{eq:ex5}--\eqref{eq:ex8} are obtained as
\begin{align}
&F(x,y)=\frac{-1}{p(q_1+q_2)}\bigg[\frac{d_1q_2}{pq_1}I_0\left(\frac{2\sqrt{d_1d_2}}{q_1+q_2}\sqrt{(x-y)(\frac{q_1}{q_2}x+y)}\right)
\notag\\&+\sqrt{d_1d_2\frac{x-y}{\frac{q_1}{q_2}x+y}}I_1\left(\frac{2\sqrt{d_1d_2}}{q_1+q_2}\sqrt{(x-y)(\frac{q_1}{q_2}x+y)}\right)
\notag\\&+(pd_2-\frac{d_1q_2}{pq_1})\Pi\left(\frac{pq_1d_2}{q_2}\frac{x-y}{q_1+q_2},\frac{d_1}{pq_1}\frac{q_1x+q_2y}{q_1+q_2}\right)\bigg]\label{eq:F}\\
&H(x,y)=\frac{-1}{q_1+q_2}\bigg[\frac{d_1}{p}I_0\left(\frac{2\sqrt{d_1d_2}}{q_1+q_2}\sqrt{(x-y)(\frac{q_1}{q_2}x+y)}\right)
\notag\\&+\sqrt{d_1d_2\frac{\frac{q_1}{q_2}x+y}{x-y}}I_1\left(\frac{2\sqrt{d_1d_2}}{q_1+q_2}\sqrt{(x-y)(\frac{q_1}{q_2}x+y)}\right)
\notag\\&+(\frac{pd_2q_1}{q_2}-\frac{d_1}{p})\Pi\left(\frac{pq_1d_2}{q_2}\frac{x-y}{q_1+q_2},\frac{d_1}{pq_1}\frac{q_1x+q_2y}{q_1+q_2}\right)\bigg]\label{eq:H}
\end{align}
where  $I_j(j \ge 0)$ denotes the modified Bessel function of the first kind (of order
$j$), and $\Pi(s_1,s_2)=e^{s_1+s_2}(1-s_2e^{-s_1}\int_0^1e^{-\tau s_2}I_0(2\sqrt{\tau s_1s_2})d\tau)$.

Inserting the explicit solutions $F(x,y) $ \eqref{eq:F}, $H(x,y)$ \eqref{eq:H}, $L(x,y)$ \eqref{eq:traker3} into \eqref{eq:traker1} and \eqref{eq:traker2}, the explicit solutions $\Psi$, $\Phi$ are thus obtained. Therefore, we obtain the explicit kernels in the control law as
\begin{align}
&\Psi(1,y)={F}(1,y)+\int_y^1 L(1,r){F}(r,y)dr\label{eq:exk1}\\
&\Phi(1,y)={H}(1,y)-L(1,y)+\int_y^1 L(1,r){H}(r,y)dr.\label{eq:exk2}
\end{align}
\subsection{The calculation details in the third transformation}\label{sec:odeback}
\setcounter{equation}{0}
\renewcommand{\theequation}{C.\arabic{equation}}
\textbf{Step. 1}
Considering  \eqref{eq:ncti} at $i=1$, we directly have $h_1(t)=x_1(t)-\Gamma(t)=\beta(1,t)$ in \eqref{eq:target5} with $\tau_0=0$, i.e., \eqref{eq:tau0}.

\textbf{Step. 2}
Taking the time derivative of \eqref{eq:ncti} at $i=1$, recalling \eqref{eq:o6} as well as \eqref{eq:ncti} for $i=2$, one obtains
\begin{align}
\dot h_1(t)&=\dot x_1(t)-\dot\Gamma(t)=h_2(t)+\tau_1+\dot\Gamma(t)+f_1(x_1)-\dot\Gamma(t)\notag\\&=-c_1h_1(t)+h_2(t)
\end{align}
with choosing
$\tau_1(x_1(t),\Gamma(t))=-c_1h_1(t)-f_1(x_1).$

\textbf{Step. 3}
Taking the time derivative of \eqref{eq:ncti} at $i=2$, recalling \eqref{eq:o6} as well as \eqref{eq:ncti} for $i=3$, one obtains
\begin{align}
\dot h_2(t)&=\dot x_2(t)-\dot\tau_1-\ddot\Gamma(t)\notag\\&=h_3(t)+\tau_2+\ddot\Gamma(t)+f_2(x_1,x_2)-\dot\tau_1-\ddot\Gamma(t)\notag\\&=-c_2h_2(t)+h_3(t)
\end{align}
with choosing
$\tau_2(\underline x_2(t),\underline \Gamma^{(1)}(t))=-c_2h_2(t)-f_2(\underline x_2)+\dot\tau_1=-c_2h_2(t)-f_2(\underline x_2)+\frac{\partial\tau_{1}}{\partial{x_1}}(x_{2}+f_2)+\frac{\partial\tau_{1}}{\partial{\Gamma(t)}}\dot\Gamma(t)$.

\textbf{Step. 4}
We make an induction hypothesis that we have
\begin{align}
\dot h_{i-1}(t)=-c_{i-1} h_{i-1}(t)+h_i(t)\label{eq:yi-1}
\end{align}
through \eqref{eq:ncti} by choosing
\begin{align}
&\tau_{i-1}(\underline x_{i-1}(t),\underline \Gamma^{(i-2)}(t))=-c_{i-1}h_{i-1}(t)-f_{i-1}(\underline x_{i-1})\notag\\&+\sum_{j=1}^{i-2}\bigg[\frac{\partial\tau_{i-2}}{\partial{x_j}}(x_{j+1}+f_j)+\frac{\partial\tau_{i-2}}{\partial{\Gamma^{(j-1)}(t)}}\Gamma^{(j)}(t)\bigg].\label{eq:i-1}
 \end{align}
Taking the time derivative of \eqref{eq:ncti} and recalling \eqref{eq:o6}, one obtains
\begin{align}
&\dot h_{i}(t)=\dot x_{i}(t)-\dot \tau_{i-1}-\Gamma^{(i)}(t)\notag\\&=h_{i+1}(t)+\tau_{i}(t)+\Gamma^{(i)}(t)+f_{i}(\underline x_{i}(t))-\dot \tau_{i-1}-\Gamma^{(i)}(t)\notag\\&=-c_ih_i(t)+h_{i+1}(t),\label{eq:yi}
\end{align}
by choosing $\tau_{i}$ as
\begin{align}
&\tau_{i}(\underline x_{i}(t),\underline \Gamma^{(i-1)}(t))=-c_i h_i(t)-f_{i}(\underline x_{i}(t))+\dot \tau_{i-1}\notag\\
&=-c_i h_i(t)-f_{i}(\underline x_{i}(t))\notag\\&+\sum_{j=1}^{i-1}\bigg[\frac{\partial\tau_{i-1}}{\partial{x_j}}(x_{j+1}+f_j)+\frac{\partial\tau_{i-1}}{\partial{\Gamma^{(j-1)}(t)}}\Gamma^{(j)}(t)\bigg].\label{eq:tau-i}\end{align}
Therefore, \eqref{eq:yi-1}, \eqref{eq:i-1} hold for $i$, i.e., the induction step holds.
Recalling the base cases $\tau_1$ and $\tau_2$ in Steps 2 and 3, it follows that \eqref{eq:yi}, \eqref{eq:tau-i} hold for $i=1,\cdots,m-1$. Therefore, we obtain \eqref{eq:taui}.

\textbf{Step. 5}
According to \eqref{eq:ncti} for $i=m$ and \eqref{eq:o7}, one obtains
\begin{align}
&\dot h_m(t)=\dot x_m(t)-\dot \tau_{m-1}-\Gamma^{(m)}(t)\notag\\&=f_m(\underline x_m)+{\sum_{i=0}^{m-1} \bar q_iz^{(i)}(1,t)+M^TY(t)}\notag\\&+ U(t)-\dot \tau_{m-1}-\Gamma^{(m)}(t).\label{eq:dym}
\end{align}
We thus arrive at
$\dot h_m(t)=-c_m h_m(t)$
by choosing $U(t)$ as
\begin{align}
U(t)=&\dot \tau_{m-1}-f_m(\underline x_m)-{\sum_{i=0}^{m-1} \bar q_iz^{(i)}(1,t)-M^TY(t)}\notag\\&-c_mh_m(t)+\Gamma^{(m)}(t)\notag\\
=&-\sum_{i=0}^{m-1} \bar q_iz^{(i)}(1,t)-M^TY(t)+\Gamma^{(m)}(t)\notag\\&-c_mh_m(t)-f_m(\underline x_m)\notag\\
&+\sum_{j=1}^{m-1}\bigg[\frac{\partial\tau_{m-1}}{\partial{x_j}}(x_{j+1}+f_j)+\frac{\partial\tau_{m-1}}{\partial{\Gamma^{(j-1)}(t)}}\Gamma^{(j)}(t)\bigg]\notag\\
=&\tau_m-\sum_{i=0}^{m-1} \bar q_iz^{(i)}(1,t)-M^TY(t)+\Gamma^{(m)}(t),
\end{align}
which is the control input given in \eqref{eq:U1}.
Therefore, we arrive at $h_i$ governed by \eqref{eq:targetym-1}, \eqref{eq:targetym}.
\subsection{Proof of Lemma \ref{lem:estimate}}\label{AP:convergence}
\setcounter{equation}{0}
\renewcommand{\theequation}{D.\arabic{equation}}
We propose the following claims that are used in completing the proof of Lemma \ref{lem:estimate}.
\begin{clam}\label{cl:nsQ0}
The sufficient and necessary conditions of $Q_{\bar n,1}(\mu_{i+1},t_{i+1})=0$, $Q_{\bar n,3}(\mu_{i+1},t_{i+1})=0$ for $\bar n=1,2,\ldots$, are $w[t]= 0$, $z[t]= 0$ on $t\in[\mu_{i+1},t_{i+1}]$, respectively.
The sufficient and necessary condition of $Q_{4}(\mu_{i+1},t_{i+1})=0$ is $w(0,t)= 0$ on $t\in[\mu_{i+1},t_{i+1}]$.
\end{clam}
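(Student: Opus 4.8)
The plan is to handle the three stated equivalences by a common two-stage collapse: first, turn the vanishing of a time-integral of a square into the vanishing of a continuous integrand; second, strip off the cumulative time-integral by differentiating in $t$; and, for the two PDE quantities only, invoke completeness of the sine system to pass from vanishing Fourier moments to the vanishing of the profile itself. Throughout I would use that the solution is $C^{m-1}$ in space and time (Proposition \ref{Pro:1}), so all signals appearing below are continuous and $L^2$-vanishing upgrades to pointwise vanishing.

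For the sufficiency (``if'') direction the computation is immediate. If $w[t]=0$ on $[\mu_{i+1},t_{i+1}]$, then $\int_0^1\sin(x\pi\bar n)w(x,\tau)dx=0$ for every $\tau$ and every $\bar n$, so $g_{\bar n,1}(t,\mu_{i+1})\equiv0$ by its definition as a time integral, whence $Q_{\bar n,1}=\int_{\mu_{i+1}}^{t_{i+1}}g_{\bar n,1}^2\,dt=0$. The identical argument with $z$ in place of $w$ gives $Q_{\bar n,3}=0$ from $z[t]=0$, and $w(0,t)=0$ forces $q_{\rm b}\equiv0$ and hence $Q_{4}=0$.

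For the necessity (``only if'') direction I would start from $Q_{\bar n,1}=0$. Since $g_{\bar n,1}(\cdot,\mu_{i+1})$ is continuous and $Q_{\bar n,1}$ is the integral of its square, non-negativity of the integrand forces $g_{\bar n,1}(t,\mu_{i+1})=0$ for all $t\in[\mu_{i+1},t_{i+1}]$. Differentiating in $t$ and using the fundamental theorem of calculus yields
\[
\int_0^1 \sin(x\pi\bar n)\,w(x,t)\,dx=0,\qquad \forall t\in[\mu_{i+1},t_{i+1}],
\]
and this holds for every $\bar n\ge1$. The key step is then to note that $\{\sin(x\pi\bar n)\}_{\bar n\ge1}$ is a complete orthogonal basis of $L^2(0,1)$, so vanishing of all these Fourier sine coefficients forces $w(\cdot,t)=0$ in $L^2(0,1)$, and continuity of $w$ upgrades this to $w[t]=0$ pointwise. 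The argument for $Q_{\bar n,3}=0\Rightarrow z[t]=0$ is identical with $z$ replacing $w$. For $Q_{4}=0$ the first stage gives $q_{\rm b}(t,\mu_{i+1})\equiv0$, and since $q_{\rm b}(t,\mu_{i+1})=\int_{\mu_{i+1}}^t w(0,\tau)d\tau$, differentiation returns $w(0,t)=0$ directly, with no completeness argument needed.

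The main obstacle is precisely the necessity direction for the PDE quantities, namely the passage from ``all sine moments of $w(\cdot,t)$ vanish'' to ``$w(\cdot,t)\equiv0$''; this is where the requirement ``for $\bar n=1,2,\dots$'' (as opposed to a single mode) is indispensable, and it rests on completeness of the sine system together with the continuity supplied by Proposition \ref{Pro:1}. The square-to-pointwise and cumulative-integral-to-integrand reductions are routine once continuity of the relevant signals is established.
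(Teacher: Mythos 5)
Your proof is correct and follows essentially the same route as the paper's: the paper defers to Lemma 2 of \cite{JiAdaptive2021} but explicitly names the same two ingredients you use, namely the continuity of the sine moments $\int_0^1\sin(x\pi\bar n)w(x,\tau)dx$ and $\int_0^1\sin(x\pi\bar n)z(x,\tau)dx$ guaranteed by Proposition \ref{Pro:1} and the fact that $\{\sqrt{2}\sin(\bar n\pi x):\bar n=1,2,\ldots\}$ is an orthonormal basis of $L^2(0,1)$, while the $Q_4$ case is handled directly from \eqref{eq:Q4} and \eqref{eq:ga} exactly as you do. Your two-stage collapse (vanishing integral of a continuous square, then the fundamental theorem of calculus, then completeness of the sine system) is precisely the intended argument.
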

\begin{proof}
The proof of the fact that the sufficient and necessary conditions of $Q_{\bar n,1}(\mu_{i+1},t_{i+1})=0$, $Q_{\bar n,3}(\mu_{i+1},t_{i+1})=0$ for $\bar n=1,2,\ldots$, are $w[t]= 0$, $z[t]= 0$ on $t\in[\mu_{i+1},t_{i+1}]$, respectively, is the same as the proof of Lemma 2 in \cite{JiAdaptive2021}, where the continuity of $\int_0^{1}\sin({x\pi \bar n})w(x,\tau)dx$ and $\int_0^{1}\sin({x\pi \bar n})z(x,\tau)dx$ guaranteed by Proposition \ref{Pro:1},  and the set $\{\sqrt{2}\sin(n\pi x):\bar n=1,2,\ldots\}$ being an orthonormal basis of $L^2(0,1)$ have been used.

By recalling \eqref{eq:Q4}, \eqref{eq:ga}, the fact that the sufficient and necessary condition of $Q_{4}(\mu_{i+1},t_{i+1})=0$ is $w(0,t)=0$ on $t\in[\mu_{i+1},t_{i+1}]$, is obtained straightforwardly.

The proof of Claim \ref{cl:nsQ0} is complete.\end{proof}
\begin{clam}\label{lem:theta}
For the adaptive estimates defined by \eqref{eq:adaptivelaw} based on the data in the interval $t\in[\mu_{i+1},t_{i+1}]$, the following statements hold:
1) If $w[t]$ is not identically zero and $z[t]$ is identically zero on $t\in[\mu_{i+1},t_{i+1}]$, then $\hat d_1(t_{i+1})=d_1$, $\hat d_2(t_{i+1})=\hat d_2(t_{i})$;
2) If $z[t]$ is not identically zero and $w[t]$ is identically zero on $t\in[\mu_{i+1},t_{i+1}]$, then $\hat d_2(t_{i+1})=d_2$, $\hat d_1(t_{i+1})=\hat d_1(t_{i})$;
3) If both $w[t]$ and $z[t]$ are identically zero on $t\in[\mu_{i+1},t_{i+1}]$, then $\hat d_1(t_{i+1})=\hat d_1(t_{i})$, $\hat d_2(t_{i+1})=\hat d_2(t_{i})$.
4) If $z[t]$, $w[t]$ are not identically zero for $t\in[\mu_{i+1},t_{i+1}]$, then $\hat d_1(t_{i+1})=d_1$, $\hat d_2(t_{i+1})=d_2$.
5) If $w(0,t)$ is not identically zero for $t\in[\mu_{i+1},t_{i+1}]$, then $\hat b(t_{i+1})=b$.
6) If $w(0,t)$ is  identically zero for $t\in[\mu_{i+1},t_{i+1}]$, then $\hat b(t_{i+1})=\hat b(t_{i})$.
\end{clam}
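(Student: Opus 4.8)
The plan is to exploit the block-diagonal structure of $G_{\bar n}$ in \eqref{eq:G}, which decouples the identification of $(d_1,d_2)$ (the upper $2\times 2$ block built from $Q_{\bar n,1},Q_{\bar n,2},Q_{\bar n,3}$) from that of $b$ (the scalar $Q_4$). Since the cost $|\ell-\hat\theta(t_i)|^2$ in \eqref{eq:adaptivelaw} is separable in $(\ell_1,\ell_2)$ and $\ell_3$, the constrained minimization splits into two independent sub-problems. In every case $\theta$ is itself feasible by \eqref{eq:Fer}, so each sub-problem reduces to deciding whether the affine constraint set pins a component (in which case it must equal the corresponding entry of $\theta$) or leaves it free (in which case the minimizer retains $\hat\theta(t_i)$, that being the unique closest feasible value on an unconstrained coordinate).

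For $b$ (statements 5 and 6) I would argue directly from the scalar equation $H_{3}=Q_{4}\ell_3$. By Claim \ref{cl:nsQ0}, $w(0,t)\not\equiv 0$ on $[\mu_{i+1},t_{i+1}]$ gives $Q_{4}>0$, so $\ell_3=H_{3}/Q_{4}=b$ is forced; whereas $w(0,t)\equiv 0$ gives $Q_{4}=0$ and $H_{3}=0$ (since $q_{\rm b}\equiv 0$), leaving $\ell_3$ free, so the minimizer keeps $\hat b(t_i)$. For statements 1--3 I would invoke Claim \ref{cl:nsQ0} again: if $z[t]\equiv 0$ then every $g_{\bar n,2}$ vanishes, whence $Q_{\bar n,2}=Q_{\bar n,3}=H_{\bar n,2}=0$ for all $\bar n$, so $\ell_2$ is unconstrained while the surviving rows $H_{\bar n,1}=Q_{\bar n,1}\ell_1$ (with some $Q_{\bar n,1}>0$ when $w\not\equiv 0$) force $\ell_1=d_1$; this yields statement 1, statement 2 is symmetric, and statement 3 follows since all Gram entries then vanish.

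The substance is statement 4, where both $w[t]$ and $z[t]$ are nonzero and I must establish uniqueness of the feasible $(\ell_1,\ell_2)$. The upper $2\times 2$ blocks are exactly the Gram matrices of $g_{\bar n,1},g_{\bar n,2}$ in $L^2(\mu_{i+1},t_{i+1})$, so a vector $(\delta_1,\delta_2)$ lying in the null space of all of them satisfies $\delta_1 g_{\bar n,1}(t)+\delta_2 g_{\bar n,2}(t)=0$ for every $\bar n$ and every $t$ in the window. Differentiating in $t$ (each $g$ is a time integral from $\mu_{i+1}$) and invoking completeness of $\{\sin(\bar n\pi x)\}_{\bar n\ge 1}$ in $L^2(0,1)$ yields $\delta_1 w(x,t)+\delta_2 z(x,t)\equiv 0$ on $[0,1]\times[\mu_{i+1},t_{i+1}]$. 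Hence uniqueness of $(\ell_1,\ell_2)$, and therefore $\hat d_1(t_{i+1})=d_1$ and $\hat d_2(t_{i+1})=d_2$, holds unless $w$ and $z$ are proportional.

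Ruling out this proportional mode is the crux, and the place where the degenerate scenarios of Remark \ref{rem:control} enter. A nonzero null vector forces $z\equiv c\,w$ on the window; evaluating at $x=0$ and using the boundary relation \eqref{eq:o4}, $z(0,t)=p\,w(0,t)$, pins $c=p$ wherever $w(0,t)\neq 0$, i.e. $z\equiv p\,w$. This is precisely the configuration precluded by the excitation injected in Remark \ref{rem:control} (together with maintaining $w(0,\cdot)\not\equiv 0$), which guarantees $z[t]-p\,w[t]\not\equiv 0$ on the estimation window and thereby excludes the nontrivial null vector. I expect this exclusion---formalizing why the proportional mode cannot persist across $[\mu_{i+1},t_{i+1}]$ and linking it cleanly to the excitation construction ($\varsigma_1,\varsigma_2$)---to be the main obstacle; the surrounding steps are routine linear algebra together with the orthogonality and completeness arguments above.
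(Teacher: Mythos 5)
Your decomposition is exactly the paper's: the block structure of $G_{\bar n}$ in \eqref{eq:G} splits \eqref{eq:adaptivelaw} into a scalar problem for $\ell_3$ (statements 5 and 6, handled via Claim \ref{cl:nsQ0} precisely as you do) and a $2\times 2$ Gram-matrix problem for $(\ell_1,\ell_2)$; statements 1--3 follow from the vanishing of the corresponding rows and columns as you describe (the paper defers 1 and 2 to a cited reference), and statement 4 reduces, as in the paper, to showing the feasible set is the singleton $\{(d_1,d_2)\}$, with non-uniqueness equivalent to the proportional mode $z\equiv p\,w$ on the estimation window.

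Two pieces are missing, and the first is the one you yourself flag as the obstacle. The exclusion of the proportional mode does not follow from Remark \ref{rem:control} by mere assertion: the paper completes it by substituting $z\equiv p\,w$ into \eqref{eq:o2}--\eqref{eq:o4}, which forces $w(1,t)$ onto a specific exponential $\varrho(t)$, and hence, through the $X$-ODE chain \eqref{eq:o5}--\eqref{eq:o7}, forces the control input to equal a uniquely determined signal $\varsigma_1(t)=\hbar(\varrho,p\varrho)$ throughout the window; only then does the excitation of point 1 of Remark \ref{rem:control} produce a contradiction. This derivation is needed because Remark \ref{rem:control} is itself formulated in terms of $\varsigma_1$, so without it the exclusion is circular. (Your passing worry about pinning $c=p$ when $w(0,\cdot)$ might vanish resolves itself: proportionality plus the PDEs forces $w(x,t)=w(0,t)e^{\kappa x}$, so $w(0,\cdot)\equiv 0$ would contradict $w\not\equiv 0$.) Second, the excitation is only injected for $t_{i+1}\le t_f$, so your argument does not cover estimation windows after $t_f$, where $z\equiv p\,w$ is no longer precluded and the feasible set may fail to be a singleton; the paper closes this by observing that $(d_1,d_2)$ always lies in the feasible set and that the argmin in \eqref{eq:adaptivelaw} then propagates $\hat d_1(t_i)=d_1$, $\hat d_2(t_i)=d_2$ forward step by step from $t_f$. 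Neither gap changes the architecture of your proof, but both must be filled for statement 4 to hold for all $i$.
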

\begin{proof}
The proofs of statements  1 and 2  are very similar to the proofs of cases 1 and 2 in Lemma 3 in \cite{JiAdaptive2021}, and thus they are omitted.

The proof of statement 3 is shown as follows.
First, we define $S_i$ as
\begin{align}
S_{i}:= \bigg\{\bar\ell=(\ell_1,\ell_2)\in \Theta_1: \bar Z_{\bar n}(\mu_{i+1},t_{i+1})=\bar G_{\bar n}(\mu_{i+1},t_{i+1})\bar\ell,\bigg\},\label{eq:setadaptivelaw1}
\end{align}
where ${\Theta_1=\{\bar \ell\in \mathbb R^2:\underline d_1\le\ell_1\le \overline d_1 ,\underline d_2\le\ell_2\le \overline d_2\}}$, and where the matrices $\bar Z_{\bar n}$, $\bar G_{\bar n}$ are
\begin{align}
\bar Z_{\bar n}=[H_{\bar n,1},H_{\bar n,2}]^T,~~\bar G_{\bar n}=\left[
    \begin{array}{ccc}
      Q_{\bar n,1} & Q_{\bar n,2}\\
     Q_{\bar n,2} & Q_{\bar n,3}\\
    \end{array}
  \right]
\end{align}
In this case 3), we know $Q_{\bar n,1}(\mu_{i+1},t_{i+1})=0$,  $Q_{\bar n,2}(\mu_{i+1},t_{i+1})=0$, $Q_{\bar n,3}(\mu_{i+1},t_{i+1})=0$, $H_{\bar n,1}(\mu_{i+1},t_{i+1})=0$, $H_{\bar n,2}(\mu_{i+1},t_{i+1})=0$ for $\bar n=1,2,\ldots$ according to \eqref{eq:gq1}, \eqref{eq:gq2}, \eqref{eq:H1m}--\eqref{eq:Q3m}.
It follows that {$S_i=\Theta_1$}, and then \eqref{eq:adaptivelaw} shows that $\hat d_1(t_{i+1})=\hat d_1(t_{i})$, $\hat d_2(t_{i+1})=\hat d_2(t_{i})$.

The proof of statement 4 is shown as follows. For the set $S_i$ defined in \eqref{eq:setadaptivelaw1}, by virtue of \eqref{eq:Fer}, \eqref{eq:adaptivelaw}, if $S_i$ is a singleton, then it is nothing else but the least-squares estimate of the unknown vector of parameters $(d_1,d_2)$ on the interval $[\mu_{i+1},t_{i+1}]$, and $S_i=\{(d_1,d_2)\}$ according to \eqref{eq:Fer}, \eqref{eq:G}. We next prove by contradiction that $S_i=\{(d_1,d_2)\}$. Suppose that on the contrary $S_i\neq \{(d_1,d_2)\}$, i.e., $S_i$ defined by \eqref{eq:setadaptivelaw1} is not a singleton. Following the proof (from the beginning to the end of the left column on page 1576 of \cite{J2022Event}) of result 3  in Lemma of \cite{J2022Event}, and recalling \eqref{eq:o4}, we have that a necessary condition of the hypothesis that $S_i$ is not a singleton is that
\begin{align}
w(x,t)-\frac{1}{p} z(x,t)\equiv0,~t\in[\mu_{i+1},t_{i+1}], x\in[0,1], \label{eq:hy1}
\end{align}
where $z[t]$, $w[t]$ are not identically zero.
Using \eqref{eq:hy1} together with \eqref{eq:o2}, \eqref{eq:o3}, \eqref{eq:o4}, we obtain
$w(1,t)=w(0,\mu_{i+1})e^{\frac{ q_2{d_1}+q_1{d_2}p^2}{q_1+ q_2}(t-\mu_{i+1})}e^{\frac{{d_2}p^2-{d_1}}{-p(q_1+ q_2)}}:= \varrho(t)$,
$z(1,t)=p \varrho(t)$
for $t\in[\mu_{i+1},t_{i+1}]$. According to \eqref{eq:o5}--\eqref{eq:o7}, for the given signals $w(1,t), z(1,t)$, we can obtain the corresponding input signal in \eqref{eq:o7} as $\varsigma_1(t)=\hbar(w(1,t),z(1,t))=\hbar(\varrho(t),p\varrho(t))$, which is unique, where $\hbar$ is the mapping from $w(1,t), z(1,t)$ to $U(t)$ in \eqref{eq:o5}--\eqref{eq:o7}.
Therefore,  a necessary condition of the hypothesis \eqref{eq:hy1} is that the control input is exactly equal to
$\varsigma_1(t)$
for $t\in[\mu_{i+1},t_{i+1})$: contradiction for $t_{i+1}\le t_f$ recalling the point 1 in Remark \ref{rem:control} that ensures that the control input is not identically equal to $\varsigma_1(t)$ on $t\in[\mu_{i+1},t_{i+1})$, $t_{i+1}\le t_f$, by inserting an excitation signal. That is, the equation \eqref{eq:hy1},
which is a necessary condition of the hypothesis that $S_i$ not be a singleton, does not hold in the time intervals before $t_f$. Consequently,  $S_i$ is  a singleton, i.e., $S_i=\{(d_1,d_2)\}$, $i\in\mathbb Z_+$, $i<f$. Therefore, $\hat d_2(t_{i+1})=d_2, \hat d_1(t_{i+1})=d_1$ for $t_{i+1}\le t_f$. If $t_f$ does not exist, it follows that the statement 4 in this Claim holds for all intervals on $t\in[0,\infty)$. Next, we show that the statement 4 in this Claim holds for all intervals on $t\in[0,\infty)$ if $t_f$ exists. It is sufficient to prove that $\hat d_2(t_{i+1})=d_2, \hat d_1(t_{i+1})=d_1$ holds for $t_{i+1}> t_f$ in the case 4 in this Claim. Considering $[\hat d_1(t_{f}),\hat d_2(t_{f})]=[d_1,d_2]\in S_{f}$ recalling \eqref{eq:Fer}, where $S_i$ is defined in \eqref{eq:setadaptivelaw1}, we have $\hat d_2(t_{f+1})=\hat d_2(t_{f})=d_2, \hat d_1(t_{f+1})=\hat d_1(t_{f})=d_1$ according to \eqref{eq:adaptivelaw}. We then obtain that $\hat d_2(t_{i+1})=d_2, \hat d_1(t_{i+1})=d_1$ for all $t_{i+1}> t_f$ in case 4 via step by step analysis, that is, this statement holds for all intervals on $t\in[0,\infty)$ as well when $t_f$ exists.

The proof of statement 5 is shown as follows. Define
$\bar S_{i}:= \left\{\ell_3\in[\underline b,\overline b]: H_{3}(\mu_{i+1},t_{i+1})=Q_{4}(\mu_{i+1},t_{i+1})\ell_3\right\}$.
By virtue of \eqref{eq:Fer}, \eqref{eq:G}, \eqref{eq:adaptivelaw}, if $\bar S_i$
is a singleton, then $\hat b(t_{i+1})=b$. According to Claim \ref{cl:nsQ0}, we then have that $Q_{4}(\mu_{i+1},t_{i+1})\neq0$. It follows that $\bar S_i$ is a singleton. Therefore, we obtain that $\hat b(t_{i+1})=b$.

The proof of statement 6 is shown as follows. According to \eqref{eq:H3m}, \eqref{eq:Q4}, \eqref{eq:ga}, we have that $Q_{4}(\mu_{i+1},t_{i+1})=0$,  $H_{3}(\mu_{i+1},t_{i+1})=0$. We then obtain that the set $\bar S_i=\{0<\underline b\le \ell_3\le \overline b\}$. Recalling \eqref{eq:adaptivelaw}, we have that $\hat b(t_{i+1})=\hat b(t_{i})$.
\end{proof}
\begin{clam}\label{lem:keep}
If $\hat d_2(t_{i})=d_2$ (or $\hat d_1(t_{i})=d_1$, or $\hat b(t_{i})=b$) for certain $i\in \mathbb Z_+$, then $\hat d_2(t)=d_2$ (or $\hat d_1(t)=d_1$, or $\hat b(t)=b$, respectively) for all $t\in[t_i,+\infty)$.
\end{clam}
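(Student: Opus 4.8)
The plan is to derive this persistence property as a direct corollary of the one-step transition rules already established in Claim \ref{lem:theta}, bootstrapped through an induction over the triggering index. First I would note that, because the parameter estimate is piecewise constant in time---the controller \eqref{eq:Ud} uses the frozen value $\hat\theta(t_j)$ throughout each interval $[t_j,t_{j+1})$---the continuous-time assertion reduces to the discrete assertion that $\hat d_2(t_j)=d_2$ for every triggering index $j\ge i$. Indeed, once this holds, writing $[t_i,\infty)=\bigcup_{j\ge i}[t_j,t_{j+1})$ and using that $\hat d_2(t)=\hat d_2(t_j)$ on each such interval yields $\hat d_2(t)=d_2$ for all $t\ge t_i$.

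I would prove the discrete statement by induction on $j$, the base case $j=i$ being the hypothesis. For the inductive step I assume $\hat d_2(t_j)=d_2$ and apply Claim \ref{lem:theta} to the data interval $[\mu_{j+1},t_{j+1}]$, whose four cases exhaust every possibility for whether $w[t]$ and $z[t]$ vanish identically there. In the two persistent cases (statements 2 and 4) the update \eqref{eq:adaptivelaw} returns $\hat d_2(t_{j+1})=d_2$ outright; in the two degenerate cases (statements 1 and 3) it freezes $\hat d_2(t_{j+1})=\hat d_2(t_j)$, which is $d_2$ by the inductive hypothesis. Either way $\hat d_2(t_{j+1})=d_2$, closing the induction. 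The arguments for $\hat d_1$ (exact recovery in statements 1 and 4, freezing in statements 2 and 3) and for $\hat b$ (exact recovery in statement 5, freezing in statement 6) are structurally identical; the fact that these three components may be treated separately is justified by the block-diagonal form of $G_{\bar n}$ in \eqref{eq:G}, which decouples the $(d_1,d_2)$-constraints from the $b$-constraint in the argmin \eqref{eq:adaptivelaw}.

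I do not anticipate a genuine obstacle here, since all of the analytical content---in particular the exact-recovery conclusions, whose proofs rest on the singleton/contradiction argument and the excitation safeguards of Remark \ref{rem:control}---has already been discharged inside Claim \ref{lem:theta}. The only points demanding care are bookkeeping: verifying that the case lists for $(w,z)$ and for $w(0,\cdot)$ are genuinely exhaustive on each interval $[\mu_{j+1},t_{j+1}]$, and correctly bridging from the discrete triggering instants to the continuous time variable via the piecewise-constant nature of $\hat\theta$.
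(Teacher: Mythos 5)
Your proposal is correct and follows essentially the same route as the paper's proof: both reduce the claim to the observation from Claim \ref{lem:theta} that each update returns either the true value or the previous estimate, then induct over the triggering indices and use the fact that $t_{i+1}=t_i+T$ forces $t_k\to+\infty$ so the intervals $[t_j,t_{j+1})$ cover $[t_i,\infty)$. Your additional remarks on case exhaustiveness and the block-diagonal decoupling of the $(d_1,d_2)$ and $b$ constraints are accurate but already implicit in the paper's one-line argument.
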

\begin{proof}
According to Claim \ref{lem:theta}, we have that $\hat d_2(t_{i+1})$ is equal to either $d_2$ or $\hat d_2(t_i)$. Therefore, if  $\hat d_2(t_i)=d_2$, then $\hat d_2(t)=d_2$ for all $t\in[t_i,\lim_{k\to \infty}(t_k))$. According to the sequence of time instants \eqref{eq:ri}, we know $\lim_{k\to \infty}(t_k)=+\infty$. The same is true of $\hat d_1$ and $\hat b$. The proof is complete.
\end{proof}
Now, we are close to completing the proof of Lemma \ref{lem:estimate}. According to the statements in Claim \ref{lem:theta}, there are three cases that lead to failed finite-time parameter identification: $z[t]$ (or  $w[t]$, or $w(0,t)$) is identically zero all the time. Next we prove the finite-time parameter identification is achieved, that is, the three cases that lead to failed finite-time parameter identification will not happen, by contradiction.

1) Suppose that $z[t]$ is identically zero for $t\in[0,\infty)$, then we know that $w[t]$ is identically zero for $t\in[0,\infty)$ according to \eqref{eq:o3}, \eqref{eq:o4}. Recalling  \eqref{eq:o5}--\eqref{eq:o7}, it means that $x_i$, $i=1,\cdots,m$ and the control signal are identically zero for $t\in[0,\infty)$: contradiction recalling the point 2 in Remark \ref{rem:control}. Therefore, $z[t]$ is not identically zero for $t\in[0,\infty)$.

2) Suppose that $w[t]$ is identically zero for $t\in[0,\infty)$, it implies that $z[t]$ is identically zero  for $t\in[0,\infty)$ if $d_2\neq 0$. It implies that $x_i$, $i=1,\cdots,m$ and the control signal are identically zero for $t\in[0,\infty)$ by recalling  \eqref{eq:o5}--\eqref{eq:o7}: contradiction recalling the point 3 in Remark \ref{rem:control}. If $d_2=0$, it follows from $w[t]\equiv0$ that $z[t]$ is identically zero  for $t\in[\frac{1}{q_1},\infty)$. It means that the control signal is identically zero for $t\in[\frac{1}{q_1},\infty)$: contraction, recalling the point 2 in Remark \ref{rem:control}. Therefore, $w[t]$ is not identically zero for $t\in[0,\infty)$.

3) Suppose that $w(0,t)$ is identically zero, then $Y(t)=Y(0)e^{At}$ in \eqref{eq:o1}, and $\beta(0,t)=-\lambda (0){Y}(t)$, $\alpha(0,t)=-p\lambda (0){Y}(t)$ obtained from \eqref{eq:o4}, \eqref{eq:contran1b}. Recalling  \eqref{eq:target3}, \eqref{eq:target5}, \eqref{eq:kerf}, \eqref{eq:ker6}, we have $\beta(x,t)=-K^TY(0)e^{A(t+\frac{1}{q_2}x)}$ and $\alpha(x,t)=-pK^TY(0)e^{A(t-\frac{1}{q_1}x)}$. Recalling \eqref{eq:Icontran1a}, \eqref{eq:Icontran1b}, $z (1,t)$ and $w (1,t)$ are thus determined, which implies that the control input now is exactly equal to the signal
$\varsigma_2(t)=\hbar(z (1,t),w (1,t))$, which is unique,
for all time, where $\hbar$ is the mapping from the given $w(1,t), z(1,t)$ to $U(t)$ in \eqref{eq:o5}--\eqref{eq:o7} mentioned above. This is  contradiction recalling the point 3 in Remark \ref{rem:control}, which avoids that the control input is identically equal to $\varsigma_2(t)$ by inserting an excitation signal. Therefore, $w(0,t)$ is not identically zero for $t\in[0,\infty)$.

According to the above analysis, recalling Claims \ref{lem:theta}, \ref{lem:keep}, we conclude that the estimates reach at the true values at a finite time instant $t_f=\min\{t_i: \exists t\in [0,t_i), w[t]\neq 0, z[t]\neq 0, w(0,t)\neq 0\}$, and the process of parameter identification is in fact the process of set $D_i$ being shrunk to a singleton that is equal to the true value $\theta$. Lemma \ref{lem:estimate} is obtained.

\end{document}